\documentclass[10pt, reqno]{amsart}
\usepackage{graphicx, amssymb, amsmath, amsthm}
\numberwithin{equation}{section}
\usepackage[numbers]{natbib}
\usepackage[backref]{hyperref} 
\usepackage{color}
\usepackage{amscd}
\usepackage{tikz}

\newcommand{\supp}{\mathop{\mathrm{supp}}}

\renewcommand{\le}{\leqslant}
\renewcommand{\leq}{\leqslant}
\renewcommand{\ge}{\geqslant}
\renewcommand{\geq}{\geqslant}

\newcommand{\R}{\mathbb{R}}
\newcommand{\C}{\mathbb{C}}

\renewcommand{\S}{\mathbb{S}}
\renewcommand{\P}{\mathbb{P}}
\newcommand{\E}{\mathbb{E}}
\newcommand{\Z}{\mathbb{Z}}
\newcommand{\lin}{\text{lin}}

\newcommand{\N}{\mathbb{N}}

\theoremstyle{plain}
\newtheorem{theorem}{Theorem}
\newtheorem{proposition}[theorem]{Proposition}
\newtheorem{lemma}[theorem]{Lemma}
\newtheorem{corollary}[theorem]{Corollary}

\theoremstyle{definition}

\newtheorem{remark}[theorem]{Remark}

\numberwithin{theorem}{section}

\makeatletter
\newcommand{\Extend}[5]{\ext@arrow0099{\arrowfill@#1#2#3}{#4}{#5}}
\makeatother

\begin{document}
	\title[On the pointwise convergence of NLS flow on $ \S^2 $]{On the pointwise convergence of cubic  Schr\"odinger flow on $\mathbb S^2$}
	
	\author[F. Meng]{Fanfei Meng}
	\address{Fanfei Meng
		\newline \indent  College of Science, China agricultural university, Beijing, \ 100083}
	\email{meng\_fanfei@cau.edu.cn}
	
	\author[Y. Song]{Yilin Song}
	\address{Yilin Song
		\newline \indent The Graduate School of China Academy of Engineering Physics, Beijing, 100088,\  China}
	\email{songyilin21@gscaep.ac.cn}
	
	\author[C. Sun]{Chenmin Sun}
	\address{Chenmin Sun
		\newline \indent CNRS, Universit\'e Paris-Est Cr\'eteil, Laboratoire d’Analyse et de Math\'ematiques appliqu\'ees, UMR 8050 du CNRS, 94010 Cr\'eteil cedex, France}
	\email{chenmin.sun@cnrs.fr}
	
	\author[R. Zhang]{Ruixiao Zhang}
	\address{Ruixiao Zhang
		\newline \indent The Graduate School of China Academy of Engineering Physics, Beijing, 100088,\  China}
	\email{zhangruixiao21@gscaep.ac.cn}
	
	\author[J. Zheng]{Jiqiang Zheng}
	\address{Jiqiang Zheng
		\newline \indent Institute of Applied Physics and Computational Mathematics, Beijing, 100088, China.
		\newline\indent National Key Laboratory of Computational Physics, Beijing, 100088, China}
	\email{zheng\_jiqiang@iapcm.ac.cn, zhengjiqiang@gmail.com}

	\subjclass[2010]{Primary 35Q55, 42B25; Secondary 35R60, 58J47}

	\keywords{Schr\"odinger equation, maximal function estimates, random averaging operator, local Weyl's law.}

	\begin{abstract}
		\noindent In this paper, we study the almost everywhere convergence of the cubic nonlinear Schr\"odinger flow to the initial data on $\mathbb S^2$,
		\begin{equation*}
			iu_t + \Delta_g u = |u|^2u, \quad (t,x)\in\R\times \S^2.
		\end{equation*}
		Inspired by the randomization method and the ansatz introduced by Burq, Camps, Sun, and Tzvetkov [Preprint, arXiv:2404.18229], we prove almost sure pointwise convergence almost everywhere for the nonlinear solution at very low regularity. This extends Compaan-Luc\`a-Staffilani [Int. Math. Res. Not. IMRN, (1) (2021), 596--647] to the spherical setting. We also provide a new necessary condition for the associated $L^p$ maximal estimate for the linear Schr\"odinger equation on $\S^2$. More precisely, we show that the $L^p$ maximal estimate fails for $s<\frac{1}{2}-\frac{1}{2p}$ with $p\ge 2$. In the special case $p=3$, our result matches the corresponding range in the $\R^2$ case, up to the endpoint, and improves the previous result of Chen-Duong-Lee-Yan [J. Math. Pures Appl. 163 (2022), 433--449].
	\end{abstract}
	
	\maketitle

	
	\section{Introduction}
	\subsection{Background}
	In this work, we consider the pointwise convergence for solutions to the nonlinear Schr\"odinger equation with initial data on $\mathbb S^2$.
	Let $u(t,x)$ be the solution to the following cubic nonlinear Schr\"odinger equation,
	\begin{equation}\tag{NLS}
		\left\{
		\begin{aligned}
			& iu_{t} + \Delta_g u = \vert u \vert^2 u, \\
			& u(0, x) = \phi(x),
		\end{aligned}
		\right. \qquad (t, x) \in \R \times \S^2,
		\label{NLS}
	\end{equation}
	where $ \Delta_g $ is the Laplace-Beltrami operator on $\S^2$.
	The main problem studied in our paper can be formulated as follows.
	For $\phi\in H^s(\mathbb S^2)$, what is the minimal $s$ such that $u(t,x)\to \phi(x)$ as $t\to0$ for almost every $x\in\mathbb S^2$?
	
	Let us briefly review the progress in the pointwise convergence for the free Schr\"odinger flow.
	For the case of $ \R^d $, this problem was formulated by Carleson in \cite{Carleson1980}:
	Find the minimal $ s \in \R $ such that
	\begin{align*}
		\lim_{t \to 0} e^{it\Delta} \phi(x) \stackrel{\text{a.e.}}{=} \phi(x), \qquad \forall ~ \phi \in H^s(\R^d).
	\end{align*}
	Carleson proved the convergence holds for $ s \geq \frac14 $ in the one-dimensional case. By using harmonic analysis tools, the a.e. convergence of the Schr\"odinger operator can be reduced to proving an $L^p$ maximal-in-time estimate.
	Later, Dahlberg-Kenig \cite{DK} gave a counterexample which reveals that $ s \ge \frac14 $ is necessary in dimension one.
	
	For the higher dimensional Euclidean spaces $\R^d$, the geometry will play an important role and is more delicate than in the 1D case.
	By establishing the smoothing estimate, Sj\"olin \cite{Sjolin} and Vega \cite{Vega} established the pointwise convergence for $ s > \frac12 $ with $ d \geq 2 $. Bourgain \cite{Bourgain1995} showed the pointwise convergence holds for some $s<\frac{1}{2}$. Later, Lee \cite{Lee} improved it to $ s > \frac38 $ for $ d = 2 $ by utilizing the localization argument and the wave-packet decomposition. Meanwhile, Bourgain \cite{Bourgain2016} proved that the necessary condition of pointwise convergence is $ s \geq \frac{d}{2(d+1)} $. Inspired by the recent development in harmonic analysis  in \cite{BG1,Guth1,Guth2}, Du-Guth-Li \cite{DGL} and Du-Zhang \cite{DZ} proved the convergence for $ s > \frac{d}{2(d+1)} $ with $ d \geq 2 $ which is sharp up to the endpoint.
	
	Compared to the Euclidean case, the Carleson problem on the torus is more subtle,
	due to the erratic behavior of the periodic Schr\"odinger kernel, which is linked to analytic number theory.
	For general initial data, Moyua-Vega \cite{MV} showed the convergence for $ s > \frac13 $ in the case of $ \mathbb T $.
	Following Moyua-Vega's argument together with the Strichartz inequality on $\mathbb{T}^d$, thanks to the decoupling theory,  Compaan-Luc\`a-Staffilani \cite{Staffilani} proved the convergence for $ s> \frac{d}{d+2} $ which is the best known result up to now.
	For special initial data $ \phi $ such that
	\begin{align*}
		\phi(x) := \sum_{\vert k \vert \sim N} e^{ik\cdot x},
	\end{align*}
	the Schr\"odinger flow can be regarded as a type of Weyl sum.
	Barron \cite{Barron} investigated the $L^4$ maximal estimate for the  Weyl sum on $ \mathbb T^1 $ based on the Weyl-sum estimates and the circle method.
	For the higher dimensional torus $ \mathbb T^d $, $ d \geq 2 $, Miao-Yuan-Zhao \cite{MYZ} proved the convergence for this special initial data for $s>\frac{d}{2(d+1)}$ which is sharp up to the endpoint.
	They also gave some new upper bounds on the Hausd\"orff dimensions of the sets associated with the large values of the Weyl sums.
	Baker, Chen, and Shparlinski \cite{BCS1,BCS2} considered generalized maximal estimates for Weyl sums of degree $d$.
	Very recently, Chen-Miao-Yuan-Zhao \cite{CMYZ} extended \cite{Barron} to maximal estimates for higher-order Weyl sums with $k\ge 3$ of the form
	\begin{align*}
		S(k, t) = \sum_{|n|\leqslant N} e^{i(n\cdot x+n^kt)}.
	\end{align*}
	For $ k = 3 $, their result is sharp up to the endpoint.
	They also gave counterexamples extending the previous result of \cite{Pierce} to $k\ge 3$.
	
	For a non-flat compact manifold without boundary $M$, Wang-Zhang \cite{Wang-Zhang} showed the convergence for $ s > \frac34 $ when $ d = 2 $ and $ s > \frac9{10} $ when $ d = 3 $. For $M=\mathbb S^2$ or $\mathbb T^2$, they improved the convergence for $ s > \frac12 $.
	For the zonal spherical harmonics, which are the special eigenfunctions of $-\Delta_g$ on $\S^d$ with $d\geq2$, Chen-Duong-Lee-Yan \cite{CDLY} proved the convergence for $ s > \frac13 $ on $ \mathbb S^d $.
	They also show the necessity of $ s > \frac14 $ for all $ d \geq 1 $. Despite a huge gap between $\frac{1}{3}$ and $\frac{1}{2}$ for the general initial data on $\S^2$,
	we construct the new counterexample in Section 3 ($s\geq\frac{1}{2}-\frac{1}{2p}$). If $p=3$, our result can match the same result as $\R^2$.

	For the cubic nonlinear Schr\"odinger equation, one can follow the strategy in \cite{Staffilani} to prove almost everywhere convergence for solutions with initial data in $H^s$ with $s>\max\{s_{\lin},s_c\}$ on certain compact manifolds, where $s_{\lin}$ is the regularity at which the linear pointwise convergence holds and $s_c$ is the threshold for local well-posedness. For example, when $M=\mathbb S^2$, $s_c=\frac{1}{4}$ and $s_{\lin}=\frac{1}{2}$ while for $M=\mathbb T^2$, $s_c=0$ and $s_{\lin}=\frac{1}{2}$. To further lower the regularity, Compaan-Luc\`a-Staffilani \cite{Staffilani} considered the following randomized initial data
	\begin{align*}
		\phi^\omega(x)=\sum_{n\in\Z^d}\frac{g_n^\omega}{\langle n\rangle^{\frac{d}{2}+\alpha}}e^{in\cdot x},\,\alpha>0,
	\end{align*}
	where $g_n^\omega$ are independent complex-valued Gaussian variables.
	Under this randomization, $e^{it\Delta}\phi^\omega(x)\in\bigcap_{s<\alpha}H^s(\mathbb T^d)$ almost surely.
	For the Wick-ordered nonlinearity, $F(u)=\big(|u|^2-2\int_{\mathbb T^d}|u(t,x)|^2{\rm d}x\big)u$, they proved almost sure everywhere convergence for this nonlinearity when $\alpha>\frac{d-1}{2}$.

	

	\subsection{Main result}
	In this paper, we study the probabilistic almost everywhere convergence for the cubic NLS on $\S^2$. 
	Note that if $\phi\in L^2(\mathbb{S}^2)$ in \eqref{NLS}, by taking the gauge transform (where we normalize the area of $\mathbb{S}^2$ to be $1$)
	\[u(t,x)\mapsto u(t,x)e^{-it+2it\|\phi\|_{L^2}^2},\]
	\eqref{NLS} is equivalent to the following Wick-ordered NLS:
	\begin{equation}\label{Wick ordered}
		\left\{
		\begin{aligned}
			& iu_{t} + (\Delta_g-1) u =\mathcal{N}(u)\stackrel{\triangle}{=} : \vert u \vert^2 u :, \\
			& u(0, x) = \phi(x),
		\end{aligned}
		\right. \qquad (t, x) \in \R \times \S^2,
	\end{equation}
	where $ : \vert u \vert^2 u : $ is defined as
	\[: \vert u \vert^2 u : (x) \stackrel{\triangle}{=} \big( \vert u(x) \vert^2 -  2\Vert u \Vert_{L^2(\S^2)}^2  \big) u(x).\]
	Before presenting our main result, we introduce the randomization of initial data.
	
	For a given probability space $(\Omega,\mathcal{F},\mathbb{P})$, we denote by  $\{g_{n,k}^\omega\}_{n\in\N,|k|\leq n}$ a sequence of complex standard Gaussian variables which are i.i.d. Let $(\lambda_n)_{n\geq 0} $ be the spectrum of $ \sqrt{-\Delta_g+1} $, and let $ (\mathbf{b}_{n, k})_{|k|\leq n} $ be an orthonormal basis of the spectral projection $\pi_n$ for the eigenspace $E_n$ of $ \lambda_n^2 $. In particular, the eigenvalues can be given by
	\begin{align*}
		\lambda_n = \sqrt{n^2+n+1}, \qquad \text{for} ~ n=0,1,2,\cdots.
	\end{align*}
	The eigenspace $E_n$ consists of the spherical harmonic functions of degree $n$ and the multiplicity for $\lambda_n^2$ is $2n+1$, i.e. $\dim(E_n)=2n+1$.
	We randomize the initial data of \eqref{Wick ordered} as
	\begin{equation}\label{randomize initial data-intro}
		\phi_{\alpha}^{\omega}(x) := \sum_{n=0}^{\infty} \frac1{\lambda_n^{\alpha }} \sum_{\vert k \vert \le n} g_{n, k}^\omega \mathbf{b}_{n, k}(x).
	\end{equation}
	This randomization induces a Gaussian probability measure $\mu_{\alpha}$ with covariance operator $(-\Delta_g +1)^{-\alpha}$, which does not depend on the choice of orthonormal basis $(\mathbf{b}_{n,k})$.
	It is known that
	$$H^{\alpha-1-}(\mathbb{S}^2):=\bigcap_{s<\alpha-1}H^s(\mathbb{S}^2)$$
	is of full $\mu_{\alpha}$-measure, and $\mu_{\alpha}(H^{\alpha-1}(\mathbb{S}^2))=0$.
	Note that the linear Schr\"odinger flow $e^{it\Delta_g}$ preserves the law $\mu_{\alpha}$.
	\vspace{0.3cm}

	For the pointwise convergence problem, our first result concerns the linear Schr\"odinger flow in both probabilistic and deterministic settings:
	
	\begin{theorem}\label{thm:maxestcount}
		For $p\geq2$ and $I=[0,2\pi]$, the maximal estimate
		\begin{align*}
			\Big\|\sup_{t\in I}\big|e^{it\Delta_g}\phi\big|\Big\|_{L^p(\S^2)}\lesssim \|\phi\|_{H^s(\S^2)}
		\end{align*}
		fails for $s<\frac{1}{2}-\frac{1}{2p}$. 
	\end{theorem}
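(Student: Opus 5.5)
The plan is to build a Knapp-type example from highest-weight spherical harmonics, which concentrate in an $N^{-1/2}$-neighbourhood of the equator. In that region the Schr\"odinger evolution behaves like a one-dimensional flow in the azimuthal variable. Group velocity then lets the supremum in $t$ align all phases at every point of the tube. Concretely, I use the polar angle $\psi$ and azimuth $\theta$, so that $x_1+ix_2=\sin\psi\, e^{i\theta}$. Let $Y_n=c_n(x_1+ix_2)^n\in E_n$ be the $L^2$-normalized highest weight harmonic. A standard Laplace-type computation gives $c_n\sim n^{1/4}$ and
\[
|Y_n(x)|\sim n^{1/4}(\sin\psi)^n\sim n^{1/4}\quad\text{for }|\psi-\tfrac\pi2|\le N^{-1/2},\ n\sim N .
\]
Fix a large $N$, a small absolute constant $c>0$, set $M=c\sqrt N$, and take
\[
\phi_N=\sum_{m=0}^{M}Y_{N+m}.
\]
Since the $Y_n$ are orthonormal, $\|\phi_N\|_{H^s}\sim N^{s}M^{1/2}\sim N^{s+1/4}$.

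Next I compute the flow. Since $-\Delta_g Y_n=n(n+1)Y_n$ and $n(n+1)=N(N+1)+m(2N+1)+m^2$ for $n=N+m$, I get
\[
|e^{it\Delta_g}\phi_N(x)|=\Big|\sum_{m=0}^{M}c_{N+m}(\sin\psi)^{N+m}e^{i m(\theta-(2N+1)t)}e^{-itm^2}\Big|.
\]
Fix $x$ in the tube $T_N=\{|\psi-\pi/2|\le N^{-1/2}\}$. As $t$ ranges over $[0,2\pi/(2N+1)]\subset I$, the quantity $(2N+1)t$ covers a full period. So I can choose $t=t(x)\le 2\pi/(2N+1)$ with $\theta-(2N+1)t\in 2\pi\Z$. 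For this $t$ the first oscillating factor is $1$, and the quadratic phase satisfies $tm^2\le 2\pi c^2 N/(2N+1)\le 1/10$ once $c$ is small. Also $(\sin\psi)^{N+m}\ge (1-N^{-1}/2)^{2N}\gtrsim1$ and $c_{N+m}\sim N^{1/4}$. Hence all terms have real part $\gtrsim N^{1/4}$, and
\[
\sup_{t\in I}|e^{it\Delta_g}\phi_N(x)|\gtrsim N^{1/4}M\sim N^{3/4}\qquad\text{for all }x\in T_N .
\]

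The final step is to compare the two sides. Since $|T_N|\sim N^{-1/2}$, the left side of the maximal estimate is at least $N^{3/4}N^{-1/(2p)}$, while the right side is $\sim N^{s+1/4}$. The estimate would therefore force $\tfrac34-\tfrac1{2p}\le s+\tfrac14$, that is $s\ge\tfrac12-\tfrac1{2p}$. Letting $N\to\infty$ shows that it fails for $s<\tfrac12-\tfrac1{2p}$.

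The main technical point is the uniform lower bound for $|Y_n|$ on the tube, together with the normalization $c_n\sim n^{1/4}$. Both follow from $\int_{\S^2}\sin^{2n}\psi\sim n^{-1/2}$ via Laplace's method. The other thing to check carefully is that the mild dependence of $c_{N+m}(\sin\psi)^{N+m}$ on $m$ does not cause cancellation. It does not, because these coefficients are positive and every phase has been made to lie within $1/10$ of $0$.
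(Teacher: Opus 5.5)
Your argument is correct, and it reaches the same exponent as the paper by a different and more elementary mechanism. Both constructions use the highest-weight harmonics $(x_1+ix_2)^n$. These turn the problem into a one-dimensional periodic sum in the azimuth, with an extra $N^{1/4}$ gain on an equatorial tube of measure $\sim N^{-1/2}$. In both cases $\|\phi_N\|_{H^s}\sim N^{s+1/4}$, and the maximal function is $\gtrsim N^{3/4}$ on (a large part of) that tube.

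The difference is which one-dimensional counterexample is transplanted. The paper takes the full dyadic block $n\sim N$ and uses the arithmetic (Talbot-type, Moyua--Vega) mechanism. At rational times $t=2\pi/p$ with odd $p\sim N^{1/2}$, and at azimuths near $2\pi q/p$, Poisson summation and the quadratic Gauss sum of modulus $\sqrt p$ give $|S_N|\gtrsim N/\sqrt p\sim N^{3/4}$. An average-order estimate for Euler's totient is then needed to show that the union $\mathcal E$ of admissible arcs has measure $\sim 1$.

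You instead take only $\sim\sqrt N$ consecutive frequencies and use transport at group velocity $2N+1$, which is the Dahlberg--Kenig mechanism. The choice $t=\theta/(2N+1)$ removes the linear phases, and the residual dispersion $tm^2\le \pi c^2$ is harmless. Your route needs no number theory and produces no Poisson-summation error terms. It also gives the lower bound at every point of the tube rather than only on $\mathcal E$, with no $\epsilon$-loss in the radial localization.

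The paper's route is heavier and yields nothing more for this statement. Its appeal is that it places the large values at rational times, in line with the Weyl-sum and Talbot-effect examples from the periodic literature recalled in the introduction.
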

	For $p=2$, we recover the necessary condition $s>\frac{1}{4}$ proved in \cite{CDLY} for the almost everywhere convergence property of the linear flow. 
	After introducing randomness, the pointwise convergence property can be substantially improved:
	\begin{theorem}\label{thm1}
		Let $\alpha>1$. Then $\mu_{\alpha}$-almost surely,
		\begin{align*}
			\lim_{t\to0}e^{it\Delta_{g}}\phi(x)=\phi(x), \quad \mbox{a.e.} ~ x\in\S^2.
		\end{align*}
	\end{theorem}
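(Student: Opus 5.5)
We first reduce Theorem \ref{thm1} to a probabilistic maximal estimate. The data $\phi=\phi^\omega_\alpha$ lies $\mu_\alpha$-a.s. in $H^{\alpha-1-}$, and for $\alpha>1$ this includes some $H^{\varepsilon}$ with $\varepsilon>0$, though this is far below the deterministic threshold. Let $P_{\le N}$ be the sharp spectral truncation to $n\le N$, and set $\phi_N=P_{\le N}\phi^\omega_\alpha$. For each fixed $N$, $e^{it\Delta_g}\phi_N$ is a finite sum of eigenfunctions, hence smooth in $(t,x)$, so it converges to $\phi_N$ uniformly. It therefore suffices to show that, almost surely,
\begin{equation*}
\Big\|\sup_{t\in[0,2\pi]}\big|e^{it\Delta_g}(\phi-\phi_{N})\big|\Big\|_{L^2(\S^2)}\longrightarrow 0
\end{equation*}
along the dyadic sequence $N=2^j$. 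By the standard Chebyshev plus Borel--Cantelli argument, it is enough to prove a summable bound on the expectation of each dyadic block:
\begin{equation*}
\E\Big\|\sup_{t}\big|e^{it\Delta_g}P_M\phi\big|\Big\|_{L^2(\S^2)}\lesssim M^{-\delta}, \qquad \delta>0,
\end{equation*}
where $P_M$ projects onto $M\le n<2M$. Summing this over $M\ge N$ and using the triangle inequality for the maximal function gives the required decay.

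To prove the dyadic bound, fix $x$ and consider the random process $F_x(t)=\sum_{M\le n<2M}\lambda_n^{-\alpha}e^{-it\lambda_n^2}\sum_{|k|\le n}g_{n,k}\mathbf b_{n,k}(x)$. Since $e^{it\Delta_g}=e^{it}e^{-it\lambda_n^2}$ on $E_n$, the unimodular factor $e^{it}$ can be dropped. For each $t$, $F_x(t)$ is a complex Gaussian variable. The key input is the addition theorem, i.e. the exact local Weyl law on $\S^2$:
\begin{equation*}
\sum_{|k|\le n}|\mathbf b_{n,k}(x)|^2=\frac{2n+1}{|\S^2|}
\end{equation*}
for every $x$. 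Its variance is therefore
\begin{equation*}
\E|F_x(t)|^2=\sum_{M\le n<2M}\lambda_n^{-2\alpha}(2n+1)\sim M^{2-2\alpha},
\end{equation*}
uniformly in $x$ and $t$. We then control the supremum over $t$. Since $\lambda_n^2=n^2+n+1$ and $n<2M$, the $t$-derivative of $F_x$ carries at most a factor $M^2$. We discretize $[0,2\pi]$ into $O(M^{2+\epsilon})$ points with mesh $M^{-2-\epsilon}$. The error between the supremum over $t$ and the maximum over the grid is controlled by $M^{-\epsilon}\sup_t|F_x|$ plus higher-order Taylor terms, using the derivative bound above. For the grid maximum we use Gaussian hypercontractivity: the expected maximum of $K$ Gaussians of variance $\sigma^2$ is $\lesssim\sigma\sqrt{\log K}$. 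This gives
\begin{equation*}
\E\sup_t|F_x(t)|\lesssim M^{1-\alpha}\sqrt{\log M}.
\end{equation*}
The absorption of the discretization error can be made rigorous with a Bernstein inequality in $t$ for the trigonometric polynomial $F_x$, whose frequencies are bounded by $\sim M^2$. Finally we integrate in $x$ and apply Minkowski's inequality, giving $\E\|\sup_t|F_x|\|_{L^2_x}\lesssim M^{1-\alpha}\sqrt{\log M}$. Since $\alpha>1$, this is $\lesssim M^{-\delta}$ for some $\delta>0$, which is summable over dyadic $M$.

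The main obstacle is the uniform-in-$x$ variance bound. On a general manifold only the local Weyl law with remainder would be available, but on $\S^2$ the addition theorem gives it exactly, which is why the argument is clean there. A secondary technical point is the passage from the grid maximum to $\sup_t$: the chaining step must cost only logarithmic factors, so that the whole range $\alpha>1$ is reached.
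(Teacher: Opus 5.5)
Your argument is correct, but it takes a different route from the paper. The paper (Proposition \ref{prop:convergence of linear solution}) proves the stronger statement that $e^{it(\Delta_g-1)}\phi_\alpha^\omega\to\phi_\alpha^\omega$ uniformly in $x$. Its steps are:
\begin{itemize}
\item Khinchin plus the exact local Weyl law give $\|P_N e^{it(\Delta_g-1)}\phi_\alpha^\omega\|_{L^p_\omega}\lesssim\sqrt{p}\,N^{1-\alpha}$ uniformly in $(t,x)$.
\item Minkowski and an optimized Markov inequality turn this into large-probability $L^p_{t,x}$ bounds on dyadic blocks.
\item Bernstein passes to $L^\infty_{t,x}$ at a cost of $N^{0+}$.
\item Borel--Cantelli and smoothness of the low-frequency part finish the proof.
\end{itemize}
You stay within the maximal-function framework instead. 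At fixed $x$ you control the Gaussian process $t\mapsto F_x(t)$ on a mesh of $O(M^{2+\epsilon})$ times, absorb the mesh error with Bernstein for trigonometric polynomials of degree $\lesssim M^2$, and use the $\sigma\sqrt{\log K}$ bound for the maximum of $K$ Gaussians. You then integrate in $x$ and finish with Chebyshev.

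What each route buys: yours gives only a.e.\ convergence, which is exactly what Theorem \ref{thm1} asserts. It is, however, more explicit about the supremum in time, which the paper leaves implicit in Proposition \ref{prop:improved Strichartz estimate} (``the proof is similar''), and it loses only $\sqrt{\log M}$ instead of $M^{0+}$. The paper's route gives convergence at every $x$ and continuity of $\phi_\alpha^\omega$. Note also that your $L^2_x$-integrated bound does not really need the exact addition theorem. Only $\int_{\S^2}\E|F_x(t)|^2\,dx=\sum_n\lambda_n^{-2\alpha}(2n+1)$ enters, and that follows from orthonormality alone. Uniformity in $x$ is what the paper needs in order to reach $L^\infty_x$.

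Two small fixes. First, from $\E\sup_t|F_x|\lesssim M^{1-\alpha}\sqrt{\log M}$ alone you cannot bound $\E\|\sup_t|F_x|\|_{L^2_x}$ by Minkowski, because the inequality goes the other way. Use instead $\E\|G\|_{L^2_x}\le\big(\int\E|G(x)|^2\,dx\big)^{1/2}$ together with $\E\max_j|F_x(t_j)|^2\lesssim\sigma^2\log K$, which the same union/exponential-moment argument gives. Alternatively, run your whole reduction in $L^1_x$ and use Tonelli. Second, no ``higher-order Taylor terms'' are needed: Bernstein gives directly $\sup_t|F_x|\le\max_j|F_x(t_j)|+CM^{-\epsilon}\sup_t|F_x|$, hence $\sup_t|F_x|\le 2\max_j|F_x(t_j)|$ for large $M$. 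The maximal-Gaussian bound is a union-bound fact, not hypercontractivity.
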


	Our second result addresses the same problem for the nonlinear Schr\"odinger flow. 
	Note that for $\alpha>\frac{5}{4}$, $\mu_{\alpha}$-almost every initial datum belongs to $H^{\frac{1}{4}+}(\mathbb{S}^2)$; hence we always have a local flow thanks to \cite{BGT}. 
	Below the regularity threshold $\frac14$, it is also known from \cite{BGT2} that the Cauchy problem \eqref{NLS} on $\mathbb{S}^2$ is ill-posed in the sense that the flow map fails to be uniformly continuous in $H^s(\mathbb{S}^2)$ for any $s<\frac{1}{4}$. 
	Therefore, for $\alpha\leq \frac{5}{4}$, the solutions are constructed by the probability method in \cite{Burq2024}. 
	More precisely, if $\alpha>1$, then for $\mu_{\alpha}$-almost every initial datum $\phi$, the solution constructed in \cite{Burq2024} exists on a time interval $[-T,T]$ with $T=T(\phi)>0$ and belongs to $C^0([-T,T];H^{(\alpha-1)-}(\mathbb{S}^2))$. 
	For these solutions, we have:
	\begin{theorem}\label{thm:2}
		Let $\alpha>1$. For $\mu_{\alpha}$-almost every initial datum $\phi\in H^{(\alpha-1)-}(\mathbb{S}^2)$, the local solution $u(t,x)$ to \eqref{NLS} with initial data $\phi$ satisfies
		\begin{align}
			\lim_{t\rightarrow 0} u(t,x)=\phi(x), \quad \text{a.e.} ~ x\in\mathbb{S}^2.
		\end{align}
	\end{theorem}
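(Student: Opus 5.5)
The plan is to work with the Wick-ordered equation \eqref{Wick ordered}. The gauge factor $e^{-it+2it\|\phi\|_{L^2}^2}$ is continuous in $t$ and tends to $1$, so pointwise convergence for \eqref{Wick ordered} implies it for \eqref{NLS}. For $\alpha\le 1$-type subtleties, note that $\|\phi\|_{L^2}$ is infinite only when $\alpha\le1$; since $\alpha>1$ here, it is a.s. finite. I would then use the ansatz of \cite{Burq2024} for the solution constructed there:
\[
u=e^{it(\Delta_g-1)}\phi^\omega+\psi+w,
\]
where $\psi$ is the random averaging (para-controlled) part, namely a sum over dyadic frequencies $N$ of random operators $\mathcal{H}^{N}$ applied to $\pi_N\phi^\omega$, and $w$ is a smoother remainder lying in $C([-T,T];H^{s_1})$ for some $s_1>\frac12$.

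\textbf{Linear part.} Convergence of the first term is exactly Theorem \ref{thm1}, which is proved via a probabilistic maximal estimate. I would expand in $\mathbf{b}_{n,k}$ and use Gaussian hypercontractivity together with the local Weyl law $\sum_{|k|\le n}|\mathbf{b}_{n,k}(x)|^2=\frac{2n+1}{4\pi}$. This gives, for each $x$ and $t$, a variance of $\sum_n n\lambda_n^{-2\alpha}$ for the dyadic pieces. A chaining or Sobolev-in-time argument over the $O(N^{2+})$ relevant times then yields an a.s. bound
\[
\|\sup_t|e^{it\Delta}P_N\phi^\omega|\|_{L^p}\lesssim N^{1-\alpha+}.
\]
This bound is summable for $\alpha>1$, so convergence follows from the uniform limit of continuous functions on each truncation.

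\textbf{Remainder.} For $w$ I would use the deterministic maximal estimate of Wang--Zhang for $s>\frac12$ on $\S^2$. I would write $w$ via Duhamel as $e^{it\Delta}w(0)$, with $w(0)=0$ or a smooth function, plus $\int_0^t e^{i(t-s)\Delta}F\,ds$. The standard Christ--Kiselev-free bound $\sup_t|\int_0^t\cdot|\le\int_0^T\sup_t|e^{i(t-s)\Delta}F(s)|ds$ reduces the problem to $\|F\|_{L^1_tH^{1/2+}}$. I expect this norm to be controlled by the $X^{s,b}$ bounds from \cite{Burq2024}. Even more simply, if $w\in C_tH^{s_1}$ with $s_1>1$ fails, one can at least obtain a.e. convergence along the Duhamel form, since the nonlinearity is bounded in $L^1_tH^{1/2+}$ after the bilinear estimates. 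If the available regularity is only $X^{s,b}$ with $b>\frac12$, I would use the transference principle: $X^{s,b}$ functions are superpositions of modulated free solutions, so the maximal estimate for $s>\frac12$ transfers directly.

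\textbf{Main obstacle.} The hard step is the random averaging term $\psi=\sum_N \mathcal{H}^N(t)\pi_N\phi^\omega$. Here $\mathcal{H}^N$ is a random operator, independent of $\pi_N\phi^\omega$ and built from lower frequencies $\ll N^{\delta}$, and its kernel has the form $\sum_{n\sim N}h_{n n'}(t)$ with operator-norm bounds in a Bourgain-type space. Conditioning on the low-frequency sigma-algebra makes $\psi_N(t,x)$ Gaussian in $\pi_N\phi^\omega$. Its variance at $(t,x)$ is
\[
\sum_{n,k}\lambda_n^{-2\alpha}|(\mathcal{H}^N(t)\mathbf{b}_{n,k})(x)|^2,
\]
which I would control on average in $x$ by the Hilbert--Schmidt norm, i.e. $N^{2-2\alpha}$ times the operator norm, using again the local Weyl law and rotation-invariance of $\mu_\alpha$. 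I would then bound the supremum over $t$ by the time-derivative bound of $\mathcal{H}^N$, using Sobolev in $t$ with $\partial_t$ costing $N^2$ polynomially, via a union bound over an $N^{-10}$-net of times. This yields $\|\sup_t|\psi_N|\|_{L^2_x}\lesssim N^{1-\alpha+}$ off an exceptional set of probability $e^{-N^c}$. Summing over $N$ and applying Borel--Cantelli, $\psi$ is an a.s. uniform-in-$t$ limit of continuous functions with $\psi(0)=0$, which gives the convergence.
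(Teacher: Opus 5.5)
Your proposal is correct and is essentially the paper's argument. The colored terms $\psi_N$ are controlled by conditioning on $\mathcal{B}_{\le N/2}$, a fine time net with a union bound, and time regularity between net points, while $w$ is treated by exactly your transference argument, i.e.\ Lemma \ref{maximal-easy} with $s,b>\frac12$.

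On the $\psi_N$ step, the paper uses Hölder continuity from \eqref{eq:psin-X0gamma} and Lemma \ref{lem-Pre-embed} instead of a derivative bound. It also uses the unitarity of $\mathcal{H}^{N,\dagger}_n(t)$, so that $\psi_N^\dagger(t_{j,N})$ has exactly the law of $P_N\phi$. This gives the pointwise-in-$x$ variance bound that the union bound actually needs; an average in $x$ would not suffice.

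The remaining differences are cosmetic. The paper does not split off $e^{it(\Delta_g-1)}\phi$, since it is already contained in $\psi_N$, and it proves $L^\infty_{t,x}$ rather than $L^2_xL^\infty_t$ bounds for $\psi_N$. The one thing to drop is your $L^1_tH^{\frac12+}$ Duhamel alternative for $w$: for $\alpha$ near $1$, the forcing built from the rough $\psi$'s is not in that space, and its smoothing is only visible in $X^{s,b}$.
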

	\begin{remark}
		From the current literature (see \cite{Wang-Zhang}), the almost everywhere convergence property for \eqref{NLS} is known for any initial data $\phi\in H^{s}(\mathbb{S}^2)$ with $s>\frac{1}{2}$. See Lemma \ref{maximal-easy} and Remark \ref{rm:maximal-easy}.
	\end{remark}	
	\begin{remark}
		For the linear equation, one can promote the convergence to every $x\in\S^2$ in Theorem \ref{thm1}. 
		However, since the Duhamel term will not gain any regularity compared to the linear case, by Sobolev embedding, we can only prove convergence everywhere for $\alpha>2$ in Theorem \ref{thm:2}. 
		
	\end{remark}

	\begin{remark}
		The framework of this paper may also be useful for studying probabilistic pointwise convergence for nonlinear Schr\"odinger equations with higher algebraic nonlinearities on $\mathbb T^2$, in connection with the ansatz in \cite{DNY-Ann}.
	\end{remark}
	
	
	\begin{remark}
		It is also interesting to address a similar problem for the cubic wave equation on $\mathbb T^3$ on a  compact manifold. We also refer the resolution of such problem on $\Bbb T^3$ by Luc\'a-Merino \cite{Luca}.
	\end{remark}

	Let us discuss the main difference between our work and \cite{Staffilani}.
	In the context of the cubic NLS on the torus $\mathbb T^2$,
	Bourgain \cite{Bourgain1995} found that, up to a gauge transform, the second Picard iteration exhibits a regularization effect almost surely, thanks to the multi-linear smoothing effect and the statistical properties of the randomized initial data. Such an observation motivates the introduction of the re-centering ansatz of Bourgain by decomposing the solution to NLS as
	\begin{align}\label{re-centering}
		u(t):=e^{it\Delta}\phi + w(t),
	\end{align}
	where $w(t)$ is the remainder with almost $\frac12$ order smoother than the initial data $u_0$, measured by the Sobolev regularity.
	Such a good structure allows the authors in \cite{Staffilani} to prove the almost sure almost everywhere convergence of the nonlinear Schr\"odinger flow on $\mathbb{T}^2$. 
	Indeed,
	the pointwise convergence of the linear part $e^{it\Delta}\phi$ follows from the randomization structure and from the fact that the law of the initial data is invariant under the linear evolution. As the remainder can be controlled in a Fourier restriction space $X^{0,\frac{1}{2}+}\hookrightarrow C_t^0H_x^{\frac{1}{2}+}$, one can essentially apply the deterministic maximal function estimate to prove the pointwise convergence of the nonlinear part.
	
	However, this structure is not valid in $\mathbb S^2$.
	In \cite{counterexample}, the authors computed the regularity of the second Picard iteration associated to \eqref{Wick ordered}:
	\begin{align*}
		\mathcal{T}_{\S^2}(t,\phi):=-i\int_{0}^{t}e^{i(t-s)(\Delta_g-1)}\big(:|e^{is(\Delta_g-1)}\phi|^2e^{is(\Delta_g-1)}\phi:\big)\,{\rm d}s.
	\end{align*}
	Using the spectral decomposition, the Wick-ordered nonlinearity can be rewritten as
	\begin{align*}
		\mathcal{N}(u)=\mathcal{N}^1(u)+\mathcal{N}^2(u)+\mathcal{N}^3(u),
	\end{align*}
	where
	\begin{align*}
		\mathcal{N}^1(u) & := \sum_{\substack{n_1,n_2,n_3\\n_1\neq n_2,n_2\neq n_3}}u_{n_1}\overline{u_{n_2}}u_{n_3},\\
		\mathcal{N}^2(u) & := 2\sum_{\substack{n_1\neq n_2}}(|u_{n_2}|^2-\|u_{n_2}\|_{L^2}^2)u_{n_1},\\
		\mathcal{N}^3(u) & := \sum_{n}(|u_{n}|^2-\|u_{n}\|_{L^2}^2)u_{n}.
	\end{align*}
	For $\mathbb T^2$, the second term vanishes and the algebraic property of the eigenfunctions makes the first term convenient to handle. For $\S^2$, the additional term $\mathcal{N}^2(u)$ as well as another resonant high-low-low frequency interaction will prevent the improvement of the regularity as observed in \cite{counterexample}. More precisely,
	the second Picard iteration will not have better regularity than the linear part, in the following sense:
	\begin{theorem}[\cite{counterexample}] Let  $\phi_\alpha^\omega$ be the randomized initial data as in \eqref{randomize initial data-intro}.
		For $t\geq0$ and $\alpha>\frac12$, there exist $N_0\in\N$ and $\eta>0$ such that for all $N\geq N_0$, there holds
		\begin{align*}
			\eta|t|(\ln N)^\frac12\leq\big\|\mathcal{T}_{\S^2}(t,P_{\leq N}\phi_{\alpha}^\omega)\big\|_{L^2(\Omega,H^{\alpha-1}(\S^2))},
		\end{align*}
		where $P_{\leq N}$ denotes the spectral projector on frequencies lower than $N$.
	\end{theorem}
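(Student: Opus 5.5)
The statement is an expectation lower bound, so I would compute $\E\|\mathcal{T}_{\S^2}(t,P_{\le N}\phi_\alpha^\omega)\|_{H^{\alpha-1}}^2$ directly. I would isolate a single explicit piece that is resonant in time and high-low-low in frequency, and show that the rest cannot cancel it by orthogonality in $L^2(\Omega)$.

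First I would write $u=e^{is(\Delta_g-1)}P_{\le N}\phi$, so that $u_n=e^{-is\lambda_n^2}\lambda_n^{-\alpha}\sum_{|k|\le n}g_{n,k}\mathbf{b}_{n,k}$. I would then project the Duhamel integral onto each eigenspace $E_m$. The time integral produces a factor $\int_0^t e^{is(\lambda_m^2-\lambda_{n_1}^2+\lambda_{n_2}^2-\lambda_{n_3}^2)}{\rm d}s$. This factor equals $t$ exactly on the resonant set, and $\lambda_n^2=n^2+n+1$ makes resonance a purely arithmetic condition. Next I would check the first Wiener chaos. By the addition theorem, $\sum_{|k|\le n}|\mathbf{b}_{n,k}(x)|^2=(2n+1)/|\S^2|$ is constant, so $\E|u_n(x)|^2=\|u_n\|_{L^2}^2$ pointwise. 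Together with $\E g^2=0$ and the constraints $n_1\ne n_2$, $n_2\ne n_3$, this should make every first-chaos contribution of $\mathcal{N}^1,\mathcal{N}^2$ vanish, and $\mathcal{N}^3$ can be treated the same way. So $\mathcal{T}_{\S^2}$ is essentially of pure third chaos, a multilinear Gaussian polynomial. Its second moment is computed through Isserlis/Wick pairings, and since all contributions are nonnegative after pairing, it can be bounded below by any sub-family of index configurations that are orthogonal to the rest.

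The sub-family I would keep comes from $\mathcal{N}^2$ with the low mode fixed, say $n_2=1$ (or any fixed small $n_2\ge1$), with high mode $n_1=n\le N$ and output projected onto $m=n$. This configuration is resonant, so it contributes
\[
-2it\,\pi_n\big[(|u_1|^2-\|u_1\|_{L^2}^2)\,u_n\big],
\]
up to a unimodular phase. After Isserlis, its contribution to the $H^{\alpha-1}$ norm squared is
\[
t^2\lambda_n^{2\alpha-2}\cdot\lambda_n^{-2\alpha}\cdot\E\big\|\pi_n\big[(|v_1|^2-\|v_1\|^2)\textstyle\sum_k g_{n,k}\mathbf{b}_{n,k}\big]\big\|_{L^2}^2 ,
\]
where $v_1=\sum_{|k|\le 1}g_{1,k}\mathbf{b}_{1,k}$ and $\lambda_1^{-2\alpha}$ is a harmless constant. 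Averaging over $g_{n,k}$ turns the last expectation into a Hilbert–Schmidt norm, namely $\E_{v_1}\sum_{k}\|\pi_n(F\,\mathbf{b}_{n,k})\|^2$ with $F=|v_1|^2-\|v_1\|^2$ a fixed degree-$\le2$ zero-mean harmonic polynomial. Since $\pi_n(F\,\cdot)$ acts on $E_n$ as a Toeplitz-type operator whose symbol is the degree-2 part of $F$, this should equal $(2n+1)\,c_F(1+o(1))$ with $c_F>0$ for generic $F$. The total is then $\gtrsim t^2\sum_{n\le N}\lambda_n^{-2}(2n+1)\sim t^2\ln N$, which gives the claim with $\eta$ depending on $\alpha$ only through $\lambda_1$.

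The main obstacle is the nondegeneracy $c_F>0$ uniformly in $n$. Concretely, I need a lower bound $\|\pi_n(Y_2^{j}\mathbf{b}_{n,k})\|^2\gtrsim1$ on average over $k$, which is a statement about Gaunt (Clebsch–Gordan) coefficients $\langle Y_2^{j}Y_n^{k},Y_n^{k'}\rangle$. I would first try taking $F$ zonal, e.g. $F=P_2(\cos\theta)$, using the explicit formula for $\langle P_2 Y_n^k,Y_n^k\rangle$ in terms of $k^2/n^2$, whose average over $k$ of the square is a positive constant. A secondary issue is making sure that other pairings (e.g.\ $n_3=n_1$, or $\mathcal{N}^1$ resonances with output in $E_n$) do not interfere. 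Here I would use that distinct unordered index multisets give orthogonal Gaussian monomials, so they only add nonnegative terms.
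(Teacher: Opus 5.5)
The paper does not prove this statement. It is imported from \cite{counterexample}, and the text only says that the obstruction comes from $\mathcal{N}^2$ together with a resonant high-low-low interaction. Your proposal isolates exactly such a term, and the argument is correct.

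Since $|u_1(s)|^2$ does not depend on $s$, the part of $\pi_n\mathcal{T}_{\S^2}$ coming from $(n_1,n_2,n_3)\in\{(n,1,1),(1,1,n)\}$ is exactly $-2it\,e^{-it\lambda_n^2}\pi_n[F\,u_n(0)]$, where $F=|u_1|^2-\|u_1\|_{L^2}^2\in E_2$. Its contribution to $\E\|\mathcal{T}_{\S^2}\|_{H^{\alpha-1}}^2$ is $4t^2\lambda_n^{-2}\,\E\|\pi_nM_F\pi_n\|_{\mathrm{HS}}^2\sim t^2/n$. Summing over $2\le n\le N$ gives $t^2\ln N$, with $\eta,N_0$ independent of $t$; the hypothesis $\alpha>\frac12$ is never used. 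Three points need tightening.

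\emph{Orthogonality.} The principle you state (``distinct unordered index multisets give orthogonal Gaussian monomials'', ``all contributions are nonnegative after pairing'') is false for raw monomials. For example, $|g_{1,a}|^2g_{n,k}$ and $|g_{3,b}|^2g_{n,k}$ have different index multisets, yet $\E\big[|g_{1,a}|^2g_{n,k}\,\overline{|g_{3,b}|^2g_{n,k}}\big]=1$. What is true is that distinct \emph{Wick} monomials are orthogonal. This applies here because of the local Weyl law with area one: since $\sum_a(\mathbf{b}_{m,a}^2-1)=0$ pointwise,
\[ |u_m|^2-\|u_m\|_{L^2}^2=\lambda_m^{-2\alpha}\sum_{a,b}\big(g_{m,a}\overline{g_{m,b}}-\delta_{ab}\big)\big(\mathbf{b}_{m,a}\mathbf{b}_{m,b}-\delta_{ab}\big). \]
Hence every $\mathcal{N}^2$ term is a pure third-order Wick polynomial. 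So is every $\mathcal{N}^1$ term, because $\E g_ag_c=0$. In $\pi_n\mathcal{T}_{\S^2}$, the Wick monomials $:g_{1,a}\overline{g_{1,b}}:\,g_{n,k}$ arise only from the two index triples above. Your piece is therefore the orthogonal projection of $\pi_n\mathcal{T}_{\S^2}$ onto their span, and the lower bound follows rigorously.

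\emph{First chaos.} $\mathcal{N}^3$ does not have vanishing first chaos: the first-chaos projection of its $n$-th term is a nonzero multiple of $\lambda_n^{-2\alpha}u_n$. So $\mathcal{T}_{\S^2}$ is not purely third chaos. This is harmless, since that component is orthogonal to your piece, but the claim should be dropped.

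\emph{Nondegeneracy.} $F$ is a random element of $E_2$, so you cannot choose it to be zonal. You also do not need genericity. The map $F\mapsto\|\pi_nM_F\pi_n\|_{\mathrm{HS}}^2$ is a rotation-invariant Hermitian form on the irreducible $SO(3)$-module $E_2$, so by Schur's lemma it equals $c_n\|F\|_{L^2}^2$. The constant $c_n$ comes from your zonal computation, using $\langle P_2(\cos\theta)Y_n^k,Y_n^k\rangle=\frac{n(n+1)-3k^2}{(2n-1)(2n+3)}$. Equivalently, use Gaunt's formula with
\[ \begin{pmatrix}2&n&n\\0&0&0\end{pmatrix}^2=\frac{n(n+1)}{(2n-1)(2n+1)(2n+3)}. \]
Either way, with area normalized to one, $c_n=\frac{(2n+1)n(n+1)}{(2n-1)(2n+3)}\sim\frac{2n+1}{4}$. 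The factor $\frac14=P_2(0)^2$ reflects that the symbol of $\pi_nM_F\pi_n$ is the great-circle average of $F$, not $F$ itself. Finally, $\E\|F\|_{L^2}^2=6\cdot 9^{-\alpha}>0$, which gives $\E\|\mathcal{T}_{\S^2}\|_{H^{\alpha-1}}^2\ge(12\cdot 9^{-\alpha}+o(1))\,t^2\ln N$.
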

	In this spirit, the structure of the probabilistic solution associated to the general randomized initial data $\phi_{\alpha}^\omega$ cannot be of the same form as \eqref{re-centering}.
	
	Based on these observations, we adapt the newly built ansatz from \cite{Burq2024} to prove our theorem for the general randomized initial data. For the special data, i.e. zonal spherical harmonics, the symmetry makes the problem similar to the one-dimensional periodic setting. The Duhamel term still offers the opportunity to gain regularity. We refer to \cite{Burq2018,CDLY}. After introducing the random averaging operator that is typically used to remove the high-low-low interaction in \cite{Burq2024,DNY-Ann}, one can partially extract the strong resonance in the iteration scheme. The new ansatz has the leading term retaining the main regularity of the initial data, while the remainder term could be controlled in a smoother Fourier-restriction space. The key point is that the leading term in the ansatz still has the same law as the initial data (after the frequency localization) for fixed time, and is also continuous in time. Consequently, it enjoys a nice space-time continuity which allows us to prove its pointwise convergence as $t\rightarrow 0$.

	\subsection*{Acknowledgment}
	J. Zheng was supported by National key R$\&$D program of China: 2021YFA1002500 and NSF grant of China (No. 12271051).
	The authors would like to thank Renato Lucà for some interesting discussion.

	\section{Preliminaries}\label{section-Pre}
	In this section, we recall some $L^p$ eigenfunction estimates
	in both deterministic and probabilistic settings, together with some useful probabilistic estimates. We also give definitions and properties of Fourier-Lebesgue spaces and Bourgain spaces.
	\subsection{Randomization for the initial data}
	Let $ \{ g_{n,k}^\omega \}_{n\in\N,|k|\leq n} $ be a sequence of i.i.d. standard complex-valued Gaussian random variables on a probability space $ (\Omega, \mathcal{F}, \P) $, let $ \lambda_n $ be the spectrum of $ \sqrt{-\Delta_g+1} $, and let $ \mathbf{b}_{n, k} $ be a real eigenfunction of $ -\Delta_g+1 $ associated to $ \lambda_n^2 $.
	Then we randomize the initial data $ u_0 $ of \eqref{NLS} as
	\begin{equation}\label{randomize initial data}
		\phi_{\alpha}^{\omega}(x) := \sum_{n=0}^{\infty} \frac1{\lambda_n^{\alpha }} \sum_{\vert k \vert \le n} g_{n, k}^\omega \mathbf{b}_{n, k}(x).
	\end{equation}
	
	\begin{remark}
		We denote
		\[z_n^{-(\alpha-\frac{1}{2})} := \lambda_n^{-(\alpha-\frac{1}{2})} \bigg( \frac{2n+1}{\lambda_n} \bigg)^\frac12, \quad e_n^{\omega}(x) := \frac1{\sqrt{2n + 1}} \sum_{\vert k \vert \le n}g_{n, k}^\omega\mathbf{b}_{n, k}(x).\]
		and rewrite \eqref{randomize initial data} as
		\begin{equation}\label{randomize initial data 1}
			\phi_{\alpha}^{\omega}(x) = \sum_{n=0}^{\infty} \frac1{z_n^{\alpha - \frac12}} e_n^{\omega}(x).
		\end{equation}
		Since $ \lambda_n = \sqrt{n^2 + n + 1} $,  $\lambda_n$ and $z_n$ enjoy the same asymptotic. Thus, we can replace $z_n$ by $\lambda_n$. By the local Weyl's law \cite{counterexample},
		\begin{align}\label{Local-Weyl}
			\sum_{|k|\leq n} |\mathbf{b}_{n,k}(x)|^2 = \sum_{|k|\leq n} \|\mathbf{b}_{n,k}\|_{L^2(\mathbb S^2)}^2=2n+1, \qquad \forall ~ x \in \S^2,
		\end{align}
		one has $ \phi_{\alpha}^{\omega} \in H^{\alpha - 1 -}(\S^2) $ $ \P $-almost surely, which coincides with the results in \cite{Burq2018, Oh2018} for $ \alpha = \frac32 $. Also, $ e^{it\Delta_g} \phi_\alpha^\omega \in H^{\alpha - 1 -}(\S^2) $ $ \P $-almost surely.
	\end{remark}
	
	\subsection{ $L^p$ eigenfunction estimates on $\S^2$}
	Let $\lambda_n^2$ be the $n$-th eigenvalue associated to $-\Delta_g$ on a compact Riemannian surface $M$ and $\pi_n$ is the spectral projector on the $n$-th eigenspace $E_n$. Thus, we define
	\begin{equation*}
		P_{\le N} := \sum_{0\leq n \le N} \pi_n,\quad P_N := P_{\le N} - P_{\le N/2}.
	\end{equation*}
	Sogge \cite{Sogge} proved the following $L^p$ estimate :
	
	\begin{proposition}[Spectral projector estimate]For a general compact Riemannian surface $M$, there exists $C>0$ such that for all $n\geq1$ and $f\in L^2(M)$, the following estimate holds
		\begin{align*}
			\|\pi_nf\|_{L^p(M)}\leq C\begin{cases}
				\lambda_n^{\frac12\big(\frac12-\frac1p\big)}\|\pi_nf\|_{L^2(M)},&2\leq p\leq6,\\
				\lambda_n^{\frac12-\frac2p}\|\pi_nf\|_{L^2(M)},&6\leq p\leq\infty.
			\end{cases}
		\end{align*}
		Specifically, this estimate is sharp in two-sphere $\S^2$.
	\end{proposition}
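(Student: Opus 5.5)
The plan is to obtain the upper bound from a spectral cluster estimate in Sogge's style, and to prove sharpness on $\S^2$ with two explicit families of spherical harmonics.

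\emph{Reduction to a smoothed cluster operator.} Fix an even $\rho\in\mathcal S(\R)$ with $\rho(0)=1$ and $\widehat\rho$ supported in $(-\varepsilon,\varepsilon)$, where $\varepsilon$ is smaller than the injectivity radius of $M$. Set $\chi_\lambda:=\rho(\sqrt{-\Delta_g}-\lambda)$. If $\pi_nf$ is an eigenfunction with eigenvalue $\lambda_n^2$, then $\chi_{\lambda_n}\pi_nf=\pi_nf$. Hence
\[\|\pi_nf\|_{L^p}\le\|\chi_{\lambda_n}\|_{L^2\to L^p}\|\pi_nf\|_{L^2},\]
and it suffices to prove $\|\chi_\lambda\|_{L^2\to L^p}\lesssim\lambda^{\sigma(p)}$. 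Here $\sigma(p)=\frac12(\frac12-\frac1p)$ for $2\le p\le6$ and $\sigma(p)=\frac12-\frac2p$ for $6\le p\le\infty$. By interpolation it is enough to treat $p=2$, $p=6$ and $p=\infty$. The case $p=2$ is trivial.

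\emph{Kernel analysis.} Write $\chi_\lambda=\frac1{2\pi}\int\widehat\rho(t)e^{-it\lambda}e^{it\sqrt{-\Delta_g}}\,{\rm d}t$. Since $|t|<\varepsilon$, the Hadamard (or Lax) parametrix for the half-wave group applies. Stationary phase in $t$ then gives
\[\chi_\lambda(x,y)=\lambda^{\frac12}\,d_g(x,y)^{-\frac12}\,a_\lambda(x,y)\,e^{i\lambda d_g(x,y)}+O(\lambda^{\frac12})\quad\text{when } d_g(x,y)\ge\lambda^{-1},\]
and $|\chi_\lambda(x,y)|\lesssim\lambda$ when $d_g(x,y)\le\lambda^{-1}$. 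Here $a_\lambda$ is a smooth symbol. The same holds for $\chi_\lambda\chi_\lambda^*=|\rho|^2(\sqrt{-\Delta_g}-\lambda)$.

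For $p=\infty$, $TT^*$ gives $\|\chi_\lambda\|_{L^2\to L^\infty}^2=\sup_x|(\chi_\lambda\chi_\lambda^*)(x,x)|\lesssim\lambda$. This is the exponent $\frac12$.

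For $p=6$, I would dyadically decompose the kernel of $\chi_\lambda\chi_\lambda^*$ according to $d_g(x,y)\sim2^{-j}$. On each piece I would apply the two-dimensional Carleson–Sjölin/Hörmander oscillatory integral theorem to the phase $d_g(x,y)$, which satisfies the Carleson–Sjölin curvature condition on a Riemannian surface. Equivalently, this is a Stein–Tomas argument with the geodesic distance as phase. Summing in $j$ should give $\|\chi_\lambda\|_{L^2\to L^6}\lesssim\lambda^{1/6}$. I expect this endpoint step to be the main obstacle: the naive $TT^*$ bound only yields the estimate away from $p=6$, and obtaining exactly $p=6$ requires either the sharp oscillatory integral theorem or a careful Stein-type complex interpolation with the analytic family $d_g^{z}e^{i\lambda d_g}$.

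\emph{Sharpness on $\S^2$.} For $6\le p\le\infty$, take the $L^2$-normalized zonal harmonic $Z_n$. The local Weyl law \eqref{Local-Weyl} gives $|Z_n(N)|\sim n^{1/2}$ at the pole $N$, and $|Z_n|\gtrsim n^{1/2}$ on a cap of radius $c/n$. Hence
\[\|Z_n\|_{L^p}\gtrsim n^{\frac12}\,n^{-\frac2p}=n^{\frac12-\frac2p}.\]
For $2\le p\le6$, take the highest-weight harmonic $Q_n=c_n(x_1+ix_2)^n$. We have $|Q_n|=c_n(1-x_3^2)^{n/2}\approx c_ne^{-nx_3^2/2}$, a Gaussian beam of width $n^{-1/2}$ around the equator. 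The normalization $\|Q_n\|_{L^2}=1$ forces $c_n\sim n^{1/4}$, and then
\[\|Q_n\|_{L^p}\sim n^{\frac14}\,n^{-\frac1{2p}}=n^{\frac12(\frac12-\frac1p)}.\]
Both computations are elementary once the Gaussian approximation of $(1-x_3^2)^{n/2}$ and the local behaviour of $Z_n$ near the pole (a Bessel $J_0$ asymptotic) are in place.
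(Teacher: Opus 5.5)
The paper gives no proof of this proposition; it is quoted from \cite{Sogge}, so the benchmark is Sogge's argument. Several parts of your proposal are correct: the reduction to $\chi_\lambda=\rho(\sqrt{-\Delta_g}-\lambda)$, the $p=\infty$ bound via $\|\chi_\lambda\|_{L^2\to L^\infty}^2=\sup_x(\chi_\lambda\chi_\lambda^*)(x,x)\lesssim\lambda$, and the interpolation. Both sharpness examples are also right: the zonal function near the pole, and the Gaussian beam $(x_1+ix_2)^n$, which is the same highest-weight harmonic the paper uses for Theorem~\ref{thm:maxestcount}.

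The gap is at $p=6$, which is the whole content of the upper bound, and the route you describe cannot close it. Cut the kernel of $\chi_\lambda\chi_\lambda^*$ into pieces $K_r$ supported where $d_g\sim r$, $\lambda^{-1}\le r\lesssim 1$. Each piece has size $\lambda^{1/2}r^{-1/2}$ and phase $e^{\pm i\lambda d_g}$. Rescaling by $r$ gives a unit-scale operator with frequency $\mu=\lambda r$. Its $L^{6/5}\to L^6$ norm is $\lesssim\mu^{-1/6}$, by interpolating the trivial $L^1\to L^\infty$ bound with the rank-one $L^2$ bound $\mu^{-1/2}$. Undoing the scaling gives $\|K_r\|_{L^{6/5}\to L^6}\lesssim\lambda^{1/3}$, with no decay in $r$. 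This cannot be improved: in the flat model, a Knapp packet on an $r\times(r/\lambda)^{1/2}$ rectangle, carried a distance $\sim r$ along its long side, saturates it. Summing the $\sim\log\lambda$ scales with the triangle inequality therefore yields only $\|\chi_\lambda\|_{L^2\to L^6}\lesssim\lambda^{1/6}(\log\lambda)^{1/2}$. This is exactly the endpoint loss in Tomas's argument, so ``equivalently a Stein--Tomas argument'' names the problem rather than the cure. Applying Carleson--Sj\"olin to the pieces of $TT^*$ does not help either: it is an $L^2\to L^6$ theorem and cannot lower $L^{6/5}\to L^6$ norms that are already attained.

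What works, and is Sogge's proof, is to apply the oscillatory integral theorem to $\chi_\lambda$ itself.
\begin{itemize}
\item Split the kernel of $\chi_\lambda$ into the part with $d_g\le\lambda^{-1}$ and pieces $\chi^{(r)}$ with kernel $\lambda^{1/2}r^{-1/2}a\,e^{\pm i\lambda d_g}$ on $d_g\sim r$. The near-diagonal part has size $\lambda$, so Young's inequality bounds it from $L^2$ to $L^6$ by $\lambda^{-1/3}$.
\item Localize each $\chi^{(r)}$ to balls of radius $r$; the pieces recombine because $6>2$. Rescale and freeze the $y$-coordinate along the geodesic direction.
\item In the transversal variable, apply H\"ormander's $L^2(\R)\to L^6(\R^2)$ bound $\mu^{-1/3}$ with $\mu=\lambda r$. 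The Carleson--Sj\"olin condition holds because $\nabla_x d_g$ moves along the unit cosphere, which is an ellipse. Then integrate out the frozen variable by Minkowski.
\end{itemize}
This gives $\|\chi^{(r)}\|_{L^2\to L^6}\lesssim r^{1/2}\lambda^{1/6}$, a geometric series summing to $\lambda^{1/6}$.

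If you prefer complex interpolation, put the analytic parameter in the $t$-integral defining the operator, so that the $L^2$ endpoint follows from the spectral theorem. For the spatial family $d_g^{z}e^{i\lambda d_g}$ you would first have to prove the $L^2$ bounds by hand, since there is no Fourier transform to do it for you.

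Finally, the remainder in your kernel expansion must be $O(1)$, not $O(\lambda^{1/2})$. It is in fact $O(\lambda^{-N})$ once all lower-order terms are put into the oscillatory amplitude. A non-oscillating error of size $\lambda^{1/2}$ would by itself only be bounded by $\lambda^{1/2}$ from $L^2$ to $L^6$.
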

	
	In our paper, to achieve the threshold regularity in the pointwise convergence, we need the following probabilistic eigenfunction estimate.
	\begin{lemma}[Probabilistic $L^p$ eigenfunction estimate, \cite{Burq-Lebeau}] \label{Lp eigen}
		There exists $C>0$ such that for all $x\in \S^2$, $n\in\N$, the following estimate holds
		\begin{align*}
			\|e_n^\omega(x)\|_{L_\omega^p(\Omega)}\leq C\sqrt{p},\quad p \in[2,\infty].
		\end{align*}
		Furthermore, for every $R\geq1$, there exists $C_0>c_0>0$,
		\begin{align*}
			\mathbb P\big[\|e_n^\omega(x)\|_{L_\omega^p}>R\big]\leq C_0e^{-c_0R^2}.
		\end{align*}
		
	\end{lemma}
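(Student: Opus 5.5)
The plan is to use the local Weyl law \eqref{Local-Weyl} to see that, for each fixed $x\in\S^2$ and $n\in\N$, the random variable $e_n^\omega(x)$ is exactly a standard complex Gaussian. Both claims then follow from explicit Gaussian moment and tail computations. No spectral or geometric input beyond \eqref{Local-Weyl} is needed.

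\emph{Step 1: identify the law.} Fix $x$ and $n$. Since the $\mathbf{b}_{n,k}$ are real and the $g_{n,k}^\omega$ are i.i.d.\ standard complex Gaussians (rotation invariant, $\E|g|^2=1$), the finite linear combination
\[
e_n^\omega(x)=\frac{1}{\sqrt{2n+1}}\sum_{|k|\le n} g_{n,k}^\omega\,\mathbf{b}_{n,k}(x)
\]
is again a centered rotation-invariant complex Gaussian. Its variance is
\[
\E|e_n^\omega(x)|^2=\frac{1}{2n+1}\sum_{|k|\le n}|\mathbf{b}_{n,k}(x)|^2=1
\]
by \eqref{Local-Weyl}. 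Hence $e_n^\omega(x)$ has the same law as a single standard complex Gaussian $g$, independently of $x$ and $n$. This uniformity is the whole point: by contrast, the deterministic bound of Sogge's proposition would lose a power of $\lambda_n$.

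\emph{Step 2: moments.} For $2\le p<\infty$ we have $\|e_n^\omega(x)\|_{L^p_\omega}=\|g\|_{L^p_\omega}$. Since $|g|^2$ is an exponential random variable of mean $1$,
\[
\E|g|^p=\Gamma\!\left(\tfrac p2+1\right),
\]
and Stirling's formula gives $\Gamma(\tfrac p2+1)^{1/p}\le C\sqrt p$ with $C$ absolute. (Alternatively, Khintchine's inequality for Gaussians gives the same bound.) The endpoint $p=\infty$ cannot hold literally for an unbounded Gaussian. I would read it as the statement that the constant is uniform in $p$ after dividing by $\sqrt p$, which is exactly what the tail bound below encodes.

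\emph{Step 3: tail.} I interpret the event in the second claim as $\{|e_n^\omega(x)|>R\}$, which is the meaningful version since the $L^p_\omega$ norm is deterministic. Using the exponential law of $|g|^2$,
\[
\P\big[|e_n^\omega(x)|>R\big]=\P\big[|g|^2>R^2\big]=e^{-R^2},
\]
so one may take $C_0=1$ and $c_0=1$. If one prefers to avoid using exact Gaussianity, Chebyshev's inequality applied with the moment bound of Step 2 at the choice $p\sim R^2$ yields $C_0e^{-c_0R^2}$ as well.

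The main point requiring care is Step 1: checking that the realness of $\mathbf{b}_{n,k}$ and the rotation invariance of the $g_{n,k}$ make the sum exactly Gaussian with variance given by the local Weyl sum. Everything after that is routine Gaussian calculus.
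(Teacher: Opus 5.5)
Your proof is correct. Note that the paper gives no proof of this lemma: it quotes it from Burq--Lebeau, where the randomization is more general and the bounds come from concentration-of-measure arguments.

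In the Gaussian setting used here, your route is the natural one. It is also the mechanism the authors themselves use in Proposition~\ref{prop:large deviation estimate}: Khinchin's inequality \eqref{Ki} combined with the exact local Weyl law \eqref{Local-Weyl}. Your Step~1 simply observes that for Gaussian coefficients Khinchin becomes an identity of laws, $e_n^\omega(x)\sim g$. This gives the sharp quantities $\E|g|^p=\Gamma(\frac p2+1)$ and $\P(|g|>R)=e^{-R^2}$ for free.

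The two approaches buy different things. The cited approach is robust: it works for non-Gaussian randomizations and controls norms in $x$, not just pointwise values. Yours is a short, self-contained proof that exploits exact Gaussianity and the exact local Weyl law on $\S^2$.

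A few small remarks.
\begin{itemize}
\item Realness of $\mathbf{b}_{n,k}$ is not needed. Any complex linear combination $\sum_k c_k g_{n,k}$ of i.i.d.\ circular Gaussians is a circular Gaussian with variance $\sum_k|c_k|^2$.
\item At $p=\infty$ the first inequality is not false but vacuous, since the right-hand side $C\sqrt p$ is then $+\infty$.
\item To respect the literal requirement $C_0>c_0$, take $C_0=2$, $c_0=1$.
\item The tail statement is garbled as written. If you read it as concerning $\|e_n^\omega\|_{L^p_x(\S^2)}$ rather than $|e_n^\omega(x)|$, your moment bound still suffices. Combine it with Minkowski's inequality and Chebyshev's inequality at $q\sim R^2$, exactly as in Corollary~\ref{coro:probability of linear solution}.
\end{itemize}
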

	
	\subsection{Wiener chaos}\label{section-WC}
	
	We also need the following classical probabilistic estimates including Khinchine's inequality and Wiener Chaos.
	
	\begin{lemma}[\cite{Tzvetkov}]\label{lem:concentration}
		Let $(\Omega,\mathcal{F},\mathbb{P})$ be a probabilistic space,	for sequence $ \{ c_n \}_{n=0}^{\infty} \in l^2(\N) $ and i.i.d. Gaussian variables $ \{g_n^\omega \}\in N_{\C}(0, 1) $, the following two statements are equivalent and true:
		\begin{enumerate}
			\item[\rm (i)] {\rm (Khinchin's inequality)}: there exists a constant $ C > 0 $ such that
			\begin{equation}\label{Ki}
				\Big\Vert \sum_{n=0}^{\infty} c_n g_n^\omega \Big\Vert_{L_{\omega}^p(\Omega)} \le C \sqrt{p} \Vert c_n \Vert_{l^2(\N)}, \qquad \forall ~ p \ge 2.
			\end{equation}
			\item[\rm (ii)] {\rm (Large deviation)}: there exists two constants $ C_1, C_2 > 0 $ such that
			\begin{equation}\label{Ld}
				\P \Big( \Big\{ \omega \in \Omega ~ \Big\vert ~ \Big\vert \sum_{n=0}^{\infty} c_ng_n^\omega \Big\vert > \lambda \Big\} \Big) \le C_1 e^{-\frac{C_2 \lambda^2}{\Vert c_n \Vert_{l^2(\N)}^2}}, \qquad \forall ~ \lambda > 0.
			\end{equation}
		\end{enumerate}
	\end{lemma}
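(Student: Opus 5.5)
We would first reduce everything to a single Gaussian. For $\{c_n\}\in l^2(\N)$ the partial sums $S_N^\omega=\sum_{n\le N}c_ng_n^\omega$ form a Cauchy sequence in $L^2_\omega(\Omega)$, since $\|S_N-S_M\|_{L^2_\omega}^2=\sum_{M<n\le N}|c_n|^2$ by independence and $\E|g_n|^2=1$. Hence $S^\omega=\sum_n c_ng_n^\omega$ is well defined, as an $L^2_\omega$-limit and also almost surely by the martingale convergence theorem. Each $S_N$ is a centered complex Gaussian, and complex standard Gaussians are rotation invariant, i.e. $e^{i\theta}g\stackrel{d}{=}g$. Using independence we would show $S_N\stackrel{d}{=}\big(\sum_{n\le N}|c_n|^2\big)^{1/2}g$ with $g\in N_\C(0,1)$; the characteristic function of $(\mathrm{Re}\,S_N,\mathrm{Im}\,S_N)$ factorizes and depends only on $\sum|c_n|^2$. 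Passing to the limit gives $S\stackrel{d}{=}\|c\|_{l^2}\,g$, so all statements reduce to one standard complex Gaussian.

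\textbf{Proof of (i).} We would compute $\E|g|^p$ directly. With density $\pi^{-1}e^{-|z|^2}$ and polar coordinates we get $\E|g|^p=\Gamma(\tfrac p2+1)$. By Stirling, $\Gamma(\tfrac p2+1)^{1/p}\le C\sqrt p$ for $p\ge2$, and therefore $\|S\|_{L^p_\omega}\le C\sqrt p\,\|c\|_{l^2}$, which is \eqref{Ki}.

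\textbf{Proof of (ii).} Again by the distributional identity, $\P(|S|>\lambda)=\P(|g|>\lambda/\|c\|_{l^2})=e^{-\lambda^2/\|c\|_{l^2}^2}$, which is \eqref{Ld} with $C_1=C_2=1$.

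\textbf{Equivalence.} It suffices to prove it abstractly for a random variable $X$ and a normalizing constant $\sigma=\|c\|_{l^2}$.
\begin{itemize}
\item For (i)$\Rightarrow$(ii), Chebyshev gives $\P(|X|>\lambda)\le (C\sqrt p\,\sigma/\lambda)^p$. Choosing $p=\lambda^2/(eC\sigma)^2$ when this is $\ge2$ yields $e^{-p/2}=e^{-c\lambda^2/\sigma^2}$. When $p<2$, the bound holds trivially by enlarging $C_1$.
\item For (ii)$\Rightarrow$(i), the layer-cake formula gives
\[
\E|X|^p=p\int_0^\infty\lambda^{p-1}\P(|X|>\lambda)\,d\lambda\le C_1p\int_0^\infty\lambda^{p-1}e^{-C_2\lambda^2/\sigma^2}d\lambda=\tfrac{C_1p}{2}\,C_2^{-p/2}\sigma^p\,\Gamma(\tfrac p2).
\]
Taking $p$-th roots and using Stirling gives $C\sqrt p\,\sigma$.
\end{itemize}

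\textbf{Main obstacle.} There is no real obstacle here. The only point requiring care is justifying the exact Gaussian law of the infinite sum, namely the limit in distribution together with rotation invariance. If one wanted to avoid exact Gaussianity (e.g. for subgaussian $g_n$), we would instead bound the moment generating function: $\E e^{t\,\mathrm{Re}(e^{-i\theta}S)}\le e^{t^2\sigma^2/4}$ by independence. Exponential Chebyshev applied to the real and imaginary parts then gives (ii), and (i) follows as above.
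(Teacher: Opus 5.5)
Your proof is correct. The paper does not prove this lemma; it quotes it from \cite{Tzvetkov}. The standard proof there goes through exponential moments rather than the exact law. Independence and a subgaussian bound on each coefficient give $\E e^{t\,\mathrm{Re}(e^{-i\theta}S)}\le e^{ct^2\|c\|_{l^2}^2}$. Exponential Chebyshev on the real and imaginary parts then gives (ii), and the layer-cake formula turns (ii) into (i). That is essentially your closing remark combined with your (ii)$\Rightarrow$(i) step.

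Your main route is different. You identify $S\stackrel{d}{=}\|c\|_{l^2}\,g$ through rotation invariance and characteristic functions. This yields the exact formulas $\E|S|^p=\Gamma(\frac p2+1)\|c\|_{l^2}^p$ and $\P(|S|>\lambda)=e^{-\lambda^2/\|c\|_{l^2}^2}$, so (ii) holds with $C_1=C_2=1$ and no optimization in $p$ is needed.

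What your approach buys is brevity and sharp constants. What it costs is generality, since Gaussianity is used essentially. The moment-generating-function argument works for any independent centered subgaussian coefficients (Bernoulli, uniform on the circle, etc.), which is the setting in which the cited lemma is usually stated. For every use of the lemma in this paper, the coefficients are complex Gaussians $g_{n,k}$, so your version suffices.

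One minor slip: with $p=\lambda^2/(eC\sigma)^2$, Chebyshev gives $(C\sqrt p\,\sigma/\lambda)^p=e^{-p}$, not $e^{-p/2}$. This only improves the constant.
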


	\begin{proposition}[Wiener chaos, \cite{Oh2018, Simon1974}]\label{prop:WC}
		For $ k \ge 1 $, $ n = (n_1, n_2, \cdots, n_k) \in \N^k $, $ c(n) \in \C $, and $ \{g_n^\omega\} \in N_{\C}(0, 1) $ i.i.d., we define
		\[S_k(\omega) := \sum_{n \in \N^k} \Big[ c(n) \prod_{i=1}^{k} g_{n_i}^\omega \Big].\]
		Then
		\begin{equation}\label{WC}
			\Vert S_k \Vert_{L_{\omega}^p(\Omega)} \le (p-1)^{k/2} \Vert S_k \Vert_{L_{\omega}^2(\Omega)}, \qquad \forall ~ p \ge 2.
		\end{equation}
	\end{proposition}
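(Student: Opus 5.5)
The plan is to derive \eqref{WC} from Nelson's hypercontractivity of the Ornstein--Uhlenbeck semigroup. The only real work is to check that $S_k$ is a polynomial of degree at most $k$ in real Gaussian variables. Once that is done, a short Hermite-expansion argument gives the constant $(p-1)^{k/2}$.

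\textbf{Reductions.} First I will assume that $c(n)$ is finitely supported. The general case then follows by a limiting argument: truncate the sum, note that the truncations converge in $L^2_\omega$ (assuming $\|S_k\|_{L^2_\omega}<\infty$), extract an a.s.\ convergent subsequence, and apply Fatou's lemma to the $L^p$ norm. Next, I will write each complex Gaussian as $g_n=(a_n+ib_n)/\sqrt2$, where $a_n,b_n$ are i.i.d.\ real $N(0,1)$. Expanding the products shows that $S_k$ is a complex polynomial of degree at most $k$ in finitely many real standard Gaussians $\xi_1,\dots,\xi_M$. Indices may repeat, so $S_k$ need not be a homogeneous chaos. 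The argument below only uses the weaker fact
\[
S_k\in\bigoplus_{j\le k}\mathcal H_j,
\]
where $\mathcal H_j$ is the $j$-th Wiener chaos, spanned by the products $\prod_i H_{\beta_i}(\xi_i)$ of Hermite polynomials with $|\beta|=j$. This holds because every monomial of degree $\le k$ is a finite combination of Hermite products of total degree $\le k$.

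\textbf{Hypercontractivity step.} Let $P_t$ be the Ornstein--Uhlenbeck semigroup on $L^2(\R^M,\gamma_M)$. It acts diagonally on the chaos decomposition, $P_tF_j=e^{-jt}F_j$ for $F_j\in\mathcal H_j$. I will invoke Nelson's hypercontractivity theorem, which follows from Gross's logarithmic Sobolev inequality together with tensorization. It states that $\|P_tG\|_{L^p}\le\|G\|_{L^2}$ whenever $e^{2t}\ge p-1$, and it holds for complex-valued $G$ as well. Fix $t$ with $e^{2t}=p-1$ and decompose $S_k=\sum_{j\le k}F_j$. Set $G:=\sum_{j\le k}e^{jt}F_j$, so that $P_tG=S_k$. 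Since the chaoses are orthogonal,
\[
\|S_k\|_{L^p_\omega}=\|P_tG\|_{L^p_\omega}\le\|G\|_{L^2_\omega}=\Big(\sum_{j\le k}e^{2jt}\|F_j\|_{L^2_\omega}^2\Big)^{1/2}\le e^{kt}\|S_k\|_{L^2_\omega}=(p-1)^{k/2}\|S_k\|_{L^2_\omega}.
\]
This is \eqref{WC}.

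\textbf{Main obstacle.} The substantive input is Nelson's theorem itself. I would cite it (as in \cite{Simon1974}) rather than reprove it. If a self-contained argument were needed, I would first prove Gross's log-Sobolev inequality for one Gaussian, either via the two-point inequality and the central limit theorem or via the Bakry--\'Emery argument. I would then tensorize to $M$ variables and use the standard Gross equivalence between log-Sobolev and hypercontractivity, obtained by differentiating $t\mapsto\|P_tG\|_{L^{q(t)}}$ with $q(t)=1+e^{2t}$. The only other point requiring care is the passage from complex to real Gaussians, and that is routine.
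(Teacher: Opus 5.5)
Your proposal is correct and follows the paper's route. The paper simply invokes Nelson's hypercontractivity for the Ornstein--Uhlenbeck semigroup (Theorem 1.22 in \cite{Simon1974}); you supply the standard derivation from it (reduction to finitely many real Gaussians, $S_k\in\bigoplus_{j\le k}\mathcal H_j$, and the choice $e^{2t}=p-1$ with $G=\sum_{j\le k}e^{jt}F_j$), and your claim that the semigroup bound extends to complex-valued $G$ is justified by $|P_tG|\le P_t|G|$, since $P_t$ has a positive kernel.
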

	
	\begin{proof}
		One can use Nelson's hypercontractive bounds to prove it, for more details, see Theorem 1.22 in \cite{Simon1974}.
	\end{proof}
	
	The explicit bound \eqref{WC} in $ k, p $ implies the large deviation estimate.
	
	\begin{corollary}\label{coro:large devietion estimate}
		For $ S_k(\omega) $ as in Proposition \ref{prop:WC}, if $ c(n) \in l^2(\N^k) $, then there exist two constants $ C_1, C_2 > 0 $ such that
		\[\P \Big( \Big\{ \omega \in \Omega ~ \Big\vert ~ \big\vert S_k(\omega) \big\vert > \lambda \Big\} \Big) \le C_1 e^{-\frac{C_2 \lambda^2}{k}}, \qquad \forall ~ \lambda > 0.\]
	\end{corollary}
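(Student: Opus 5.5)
The plan is to deduce the corollary from the moment bound \eqref{WC} of Proposition \ref{prop:WC} via Chebyshev's inequality and an optimization in $p$. This is the standard hypercontractivity-to-tail argument. I normalize so that $\|S_k\|_{L^2_\omega(\Omega)}\le 1$, which the hypothesis $c(n)\in l^2(\N^k)$ allows; the general case follows by scaling, with $\|S_k\|_{L^2_\omega}$ entering the exponent. I also note that a Gaussian chaos of order $k$ naturally has tail $\exp(-c\lambda^{2/k})$, so the precise exponent must be read with this normalization in mind.

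\emph{Step 1.} For any $p\ge2$, Chebyshev and \eqref{WC} give
\[
\P\big(|S_k|>\lambda\big)\le \lambda^{-p}\|S_k\|_{L^p_\omega}^p\le \big(\lambda^{-1}(p-1)^{k/2}\big)^p .
\]

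\emph{Step 2.} I choose $p$ so that the base equals $e^{-1}$, namely $(p-1)^{k/2}=\lambda/e$, i.e. $p=1+(\lambda/e)^{2/k}$. This is admissible, $p\ge2$, precisely when $\lambda\ge e$. In that range the bound becomes
\[
\P\big(|S_k|>\lambda\big)\le e^{-p}\le e^{-(\lambda/e)^{2/k}}.
\]

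\emph{Step 3.} When $\lambda<e$, the probability is trivially at most $1$. Taking $C_1$ large enough (for instance $C_1=e^{C_2e^2}$-type constants) absorbs this range, giving a bound of the form $C_1e^{-C_2\lambda^{2/k}}$, uniformly after adjusting constants depending on $k$.

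\emph{Step 4.} I then reconcile this with the stated exponent $\lambda^2/k$. In the applications, $k$ is fixed ($k\le3$ for the cubic nonlinearity), so the dependence on $k$ can be put into $C_1,C_2$. For $k=1$ the bound is exactly Gaussian. For higher $k$, the honest statement is the $\lambda^{2/k}$ tail, and I would state the corollary in that form or interpret the displayed exponent accordingly, since only super-polynomial decay is used later to sum over dyadic frequencies via Borel--Cantelli.

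\textbf{Main obstacle.} Step 4 is the main obstacle: matching the precise exponent in the statement, which as literally written is stronger than what hypercontractivity yields for $k\ge2$. I would present the $\lambda^{2/k}$ version with constants depending on $k$ and $\|c\|_{l^2}$.
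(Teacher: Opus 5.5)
Your argument is the one the paper intends. Its proof only says to repeat the large-deviation argument of Lemma \ref{lem:concentration} with the hypercontractive bound of Proposition \ref{prop:WC} in place of the $\sqrt{p}$ bound, i.e.\ Chebyshev's inequality plus optimization in $p$. Your Steps 1--3 carry this out correctly and give $\P(|S_k|>\lambda)\le C_1\exp\bigl(-C_2(\lambda/\|S_k\|_{L^2_\omega})^{2/k}\bigr)$.

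Your caution in Step 4 is justified, and you can state it more strongly: the displayed exponent $\lambda^2/k$ is false for every $k\ge 2$, not merely out of reach of hypercontractivity. For instance, take $c(1,1)=1$ and all other coefficients zero, so $S_2=g_1^2$. Then $\P(|S_2|>\lambda)=\P(|g_1|^2>\lambda)=e^{-\lambda}$ under the normalization $\E|g_1|^2=1$, and no bound $C_1e^{-C_2\lambda^2/2}$ can dominate this.

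So the statement should read $\lambda^{2/k}$, which is the standard form of this estimate, and no proof can deliver the literal version. Your proposed rewording is the right fix. The typo is harmless for the rest of the paper, since this corollary is never invoked later.
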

	
	\begin{proof}
		One can follow a similar argument to that in Lemma \ref{lem:concentration} and use Proposition \ref{prop:WC} to obtain the above estimate.
	\end{proof}
	
	\subsection{Fourier-Lebesgue space and Bourgain space}
	
	Let $\mathcal{X}$ be a Banach space and $F(t,x)\in \mathcal{S}^\prime(\R\times \mathcal{X})$, denote that
	\begin{equation*}
		\|F\|_{\mathcal{F}\mathcal{L}^{q,\gamma}(\R;\mathcal{X})} := \big\|\langle\tau\rangle^\gamma \|\widehat{F}(\tau,\cdot)\|_{\mathcal{X}}\big\|_{L^q_\tau},
	\end{equation*}
	where $\widehat{F}(\tau)$ is the Fourier transform of $F$ in time variable $t$.
	
	We have the following embedding theorems for Fourier-Lebesgue space
	
	\begin{lemma}[\cite{Burq2024}, Lemma A.5]\label{lem-Pre-embed}
		Let $\mathcal{X}$ be a Banach space, $1<q<\infty$ and $\gamma\in (\frac{1}{q^\prime},1)$. Then, for all $0<\beta\le \gamma-\frac{1}{q^\prime}$ there exists $C>0$ such that for all $F\in C_c^\infty(\R;\mathcal{X})$
		\begin{equation*}
			\|F\|_{C^{0,\beta}(\R;\mathcal{X})} \le C \| F\|_{\mathcal{F}\mathcal{L}^{q,\gamma}(\R;\mathcal{X})}.
		\end{equation*}
		
	\end{lemma}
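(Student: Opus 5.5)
The plan is to prove the embedding by writing $F$ through its temporal Fourier transform and estimating the Hölder quotient directly with Hölder's inequality in $\tau$. For $F\in C_c^\infty(\R;\mathcal{X})$ we have, in the sense of Bochner integrals,
\[
F(t)=\frac{1}{2\pi}\int_{\R} e^{it\tau}\widehat F(\tau)\,{\rm d}\tau .
\]
I will bound the sup norm and the Hölder seminorm separately.

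For the sup norm, the triangle inequality for Bochner integrals gives
\[
\|F(t)\|_{\mathcal X}\le \int \langle\tau\rangle^{-\gamma}\,\langle\tau\rangle^{\gamma}\|\widehat F(\tau)\|_{\mathcal X}\,{\rm d}\tau .
\]
Hölder's inequality with exponents $(q',q)$ then bounds this by $\|\langle\tau\rangle^{-\gamma}\|_{L^{q'}}\,\|F\|_{\mathcal{FL}^{q,\gamma}}$. The first factor is finite precisely because $\gamma q'>1$, which is the hypothesis $\gamma>\frac1{q'}$.

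For the Hölder seminorm, fix $t\neq s$ and set $h=|t-s|$. Since
\[
F(t)-F(s)=\frac{1}{2\pi}\int (e^{it\tau}-e^{is\tau})\widehat F(\tau)\,{\rm d}\tau ,
\]
and $|e^{it\tau}-e^{is\tau}|\le \min(2,h|\tau|)\lesssim (h|\tau|)^{\beta}$ for any $\beta\in[0,1]$, we get
\[
\|F(t)-F(s)\|_{\mathcal X}\lesssim h^\beta\int |\tau|^{\beta}\langle\tau\rangle^{-\gamma}\,\langle\tau\rangle^{\gamma}\|\widehat F(\tau)\|_{\mathcal X}\,{\rm d}\tau .
\]
Applying Hölder again, this is at most
\[
h^\beta\,\big\|\langle\tau\rangle^{\beta-\gamma}\big\|_{L^{q'}}\,\|F\|_{\mathcal{FL}^{q,\gamma}} .
\]
The weight $\langle\tau\rangle^{\beta-\gamma}$ lies in $L^{q'}$ exactly when $(\gamma-\beta)q'>1$, that is, $\beta<\gamma-\frac1{q'}$.

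The main obstacle is the endpoint $\beta=\gamma-\frac1{q'}$ allowed in the statement, where this crude pointwise bound produces a logarithmically divergent weight. Here I would split the frequency integral at $|\tau|=h^{-1}$ instead of using $(h|\tau|)^\beta$ globally.
\begin{itemize}
\item On the low-frequency piece $|\tau|\le h^{-1}$, use $|e^{it\tau}-e^{is\tau}|\le h|\tau|$. Hölder gives the factor $h\,\big\|\langle\tau\rangle^{1-\gamma}\mathbf 1_{|\tau|\le h^{-1}}\big\|_{L^{q'}}\sim h\cdot h^{-(1-\gamma)-1/q'}=h^{\gamma-1/q'}$, using $\gamma<1$.
\item On the high-frequency piece $|\tau|>h^{-1}$, use the bound $2$. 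Hölder gives the factor $\big\|\langle\tau\rangle^{-\gamma}\mathbf 1_{|\tau|>h^{-1}}\big\|_{L^{q'}}\sim h^{\gamma-1/q'}$.
\end{itemize}
Each piece contributes $h^{\gamma-1/q'}\|F\|_{\mathcal{FL}^{q,\gamma}}$, with no logarithmic loss. This yields the endpoint estimate.

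It remains to pass from the endpoint to all $0<\beta\le\gamma-\frac1{q'}$. For $h\le1$ this is immediate, since $h^{\gamma-1/q'}\le h^\beta$. For $h\ge1$, the Hölder quotient is controlled by $2\sup_t\|F(t)\|_{\mathcal X}$, which the sup-norm estimate already handles. Combining the sup-norm bound with the Hölder-seminorm bound gives the stated inequality.
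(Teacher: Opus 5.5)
Your proof is correct. Fourier inversion followed by H\"older against $\langle\tau\rangle^{-\gamma}\in L^{q'}$ gives the sup bound. Splitting the frequency integral at $|\tau|=|t-s|^{-1}$ gives $\|F(t)-F(s)\|_{\mathcal X}\lesssim |t-s|^{\gamma-\frac1{q'}}\|F\|_{\mathcal{FL}^{q,\gamma}}$ for $|t-s|\le 1$, which is the only range where you use it; combined with the trivial bound for $|t-s|\ge 1$, this yields every $0<\beta\le\gamma-\frac1{q'}$, including the endpoint.

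The paper gives no proof of its own and simply quotes the lemma from \cite{Burq2024}. Your route is the standard argument for this embedding, and the preliminary non-endpoint bound is redundant once you do the split.
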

	
	By performing the frequency decomposition, we can express the function as
	\begin{align*}
		\widehat{F}(\tau,x)=\sum_{n=0}^{\infty}\widehat{\pi_nF}(\tau,x).
	\end{align*}	
	For $p,q,r\in[1,\infty]$, we consider the Fourier-Lebesgue type Bourgain space
	\begin{align*}
		\big\|F\big\|_{X_{p,q,r}^{s,\gamma}}=\big\|\lambda_n^s\langle\tau+\lambda_n^2\rangle^\gamma\widehat{\pi_nF}(\tau,\cdot)\big\|_{\ell_n^pL_\tau^q L_x^r}=\big\|\lambda_n^s\langle\kappa\rangle^\gamma{\big(e^{-it(\Delta_g-1)}F\big)^\wedge}(\kappa,\cdot)\big\|_{\ell_n^pL_\kappa^q L_x^r}.
	\end{align*}
	When $p=q=r=2$, this is the usual $X^{s,b}$ spaces. For fixed $n\in\N$, we denote
	$$ \|F_n\|_{X_{q,r}^{s,\gamma}(E_n)}:=\|\lambda_n^2\cdot e^{it\lambda_n^2}F_n \|_{\mathcal{F}\mathcal{L}^{q,\gamma}(\R;L_x^r(E_n) ) }.
	$$
	We recall the following maximal estimate proved in \cite{Wang-Zhang}, which is a consequence of the Sobolev embedding property:
	\begin{lemma}\label{maximal-easy}
		Let $s,b>\frac{1}{2}$. For any $F\in X^{s,b}$ and $\chi\in C_c^{\infty}(\R)$, we have
		\[
		\|\chi(t)F\|_{L_x^2L_t^{\infty}}\leq C\|F\|_{X^{s,b}}.
		\]
	\end{lemma}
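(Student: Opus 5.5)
Write $F(t,x)=\sum_n \widehat F(\cdot)$ via the spectral decomposition $F=\sum_n \pi_n F$. I will reduce Lemma \ref{maximal-easy} to a one-dimensional Sobolev embedding in time, applied frequency by frequency, and then sum in $n$ using the spectral localization in $x$.

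\textbf{Step 1: a single eigenspace.} Fix $n$ and put $F_n=\pi_nF$ and $G_n(t):=e^{-it(\Delta_g-1)}F_n(t)=e^{it\lambda_n^2}F_n(t)$. Then $|F_n(t,x)|=|G_n(t,x)|$ for every $(t,x)$. For fixed $x$, the one-dimensional embedding $H^b(\R)\hookrightarrow L^\infty(\R)$ with $b>\frac12$ (Cauchy--Schwarz on the Fourier side, $\int\langle\kappa\rangle^{-2b}d\kappa<\infty$) gives
\[
\sup_t|\chi(t)F_n(t,x)|\lesssim \big\|\langle\kappa\rangle^b\widehat{\chi G_n}(\kappa,x)\big\|_{L^2_\kappa}.
\]
Taking $L^2_x$ and using Fubini/Plancherel then gives
\[
\|\chi F_n\|_{L^2_xL^\infty_t}\lesssim\|\langle\kappa\rangle^b\widehat{\chi G_n}\|_{L^2_\kappa L^2_x}\lesssim\|F_n\|_{X^{0,b}(E_n)},
\]
where the multiplication by $\chi$ is absorbed by the standard estimate $\|\chi F\|_{X^{s,b}}\lesssim_\chi\|F\|_{X^{s,b}}$.

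\textbf{Step 2: summing in $n$.} By the triangle inequality, $\sup_t|\chi F|\le\sum_n\sup_t|\chi F_n|$, so
\[
\|\chi F\|_{L^2_xL^\infty_t}\le\sum_n\|\chi F_n\|_{L^2_xL^\infty_t}\lesssim\sum_n\lambda_n^{-s}\,\lambda_n^{s}\|F_n\|_{X^{0,b}}.
\]
Cauchy--Schwarz in $n$ bounds the right side by
\[
\Big(\sum_n\lambda_n^{-2s}\Big)^{1/2}\|F\|_{X^{s,b}}.
\]
However, $\sum_n\lambda_n^{-2s}=\sum_n n^{-2s}$ converges only when $s>\frac12$, and this is exactly the stated threshold. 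The sum runs over eigenvalues, with no multiplicity weight, because the $2n+1$-dimensional eigenspace $E_n$ is handled inside the $L^2_x$ norm in Step 1.

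\textbf{Main obstacle.} There are two points to watch. First, the $L^2_x$ norm must be kept outside the frequency sum, so that the eigenspace dimension never appears. Second, commuting $\chi$ with the $X^{s,b}$ norm needs the routine bound $\|\chi F\|_{X^{s,b}}\lesssim\|F\|_{X^{s,b}}$ for $b\ge0$, which follows from $\langle\kappa\rangle^b\le\langle\kappa-\sigma\rangle^b\langle\sigma\rangle^b$ together with Young's inequality, since $\widehat\chi$ is Schwartz. With the sum arranged over eigenvalues as in Step 2, the summation is exactly at the level $s>\frac12$ and the argument closes.
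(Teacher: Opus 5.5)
Your proof is correct and is essentially the argument behind the estimate the paper cites from Wang--Zhang without proof, described there only as ``a consequence of the Sobolev embedding property'': you remove the phase $e^{-it\lambda_n^2}$ on each eigenspace $E_n$, apply $H^b_t\hookrightarrow L^\infty_t$ for $b>\frac12$, and sum over the distinct eigenvalues with Cauchy--Schwarz, using $\sum_n\lambda_n^{-2s}<\infty$ for $s>\frac12$. One small remark: the cutoff $\chi$ is not actually needed, since $\|F_n\|_{L^2_xL^\infty_t(\R)}\lesssim\|F_n\|_{X^{0,b}}$ already holds globally in time.
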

	\begin{remark}\label{rm:maximal-easy}
		Consequently, when $s>\frac{1}{2}$, the almost everywhere convergence property holds for the nonlinear flow \eqref{NLS} with any initial data $\phi \in H^{s}(\mathbb{S}^2)$.
	\end{remark}

	For the linear operator, we can define its Fourier-Lebesgue restriction norms in the following way. Let $\mathcal{T}_n(t):E_n\to E_n$ be a linear operator on $n$-th eigenspace $E_n$ depending on time variable. We denote $T_{n;k,k^\prime}(t)$ by the matrix element of $\mathcal{T}_n$ under the eigenbasis $\{{\bf\mathbf {b}}_{n,k}\}$,
	\begin{align*}
		\mathcal{T}_n(\mathbf{b}_{n,k})=\sum_{|\ell|\leq n}T_{n;\ell,k}(t)\mathbf{b}_{n,\ell}.
	\end{align*}
	The Fourier transform of $\mathcal T_n(t)$ is defined by
	\begin{align*}
		\widehat{\mathcal T}_n(\tau)=\int_{\R}\mathcal{T}_n(t)e^{-it\tau}\,{\rm d}t.
	\end{align*}
	
	We denote $S_n^{q,\gamma}$ and $S_n^{q,\gamma,*}$ by
	\begin{gather*}
		\big\|\mathcal{T}_n\big\|_{S_n^{q,\gamma}}=\big\|\langle\tau+\lambda_n^2\rangle^\gamma\widehat{\mathcal T}_n(\tau)\big\|_{E_n\to L_\tau^qE_n},\\
		\big\|\mathcal T_n\big\|_{S_n^{q,\gamma,*}}=\big\|\langle\tau-\lambda_n^2\rangle^\gamma\widehat{\mathcal T}_n(\tau)\big\|_{E_n\to L_\tau^qE_n},
	\end{gather*}
	where we do not distinguish $E_n$ and $L^2(E_n)$.

	Next, we give small $\sigma\in (0,2^{-100})$ and present $\sigma$ dependent parameters\\ $(q(\sigma),\gamma(\sigma),\gamma_1(\sigma),\delta(\sigma),\theta(\sigma),b(\sigma))$ as follows, which was borrowed from \cite{Burq2024},
	\begin{equation}\label{fml-pre-para}
		\begin{aligned}
			\frac{1}{q(\sigma)} = \sigma,\quad 1-\gamma(\sigma) = \sigma-\sigma^{10},\quad 1-\gamma_1(\sigma) = \sigma-\sigma^{15},\\
			\delta(\sigma) = \sigma^{20},\quad s(\sigma) = \alpha-\frac{1}{2}-100\sigma,\quad \theta(\sigma) = \sigma^5.
		\end{aligned}
	\end{equation}
	Here, parameter $b(\sigma)$ is restricted in the interval $(\frac{1}{2},\frac{1}{2}+\theta(\sigma))$.
	
	Since $\alpha > 1$, we may take $\sigma$ small enough(depend on $\alpha$) such that
	\begin{equation*}
		s(\sigma) = \alpha-\frac{1}{2}-100\sigma > \frac{1}{2} + 100\sigma.
	\end{equation*}
	Observing that $1<\gamma_1(\sigma)q^\prime(\sigma)<\gamma(\sigma)q^\prime(\sigma)$, thus we can find
	\begin{equation*}
		0<\delta(\sigma)\ll \gamma_1(\sigma)-\frac{1}{q^\prime(\sigma)}\ll \gamma(\sigma)-\frac{1}{q^\prime(\sigma)}\ll \theta(\sigma) \ll \frac{1}{q(\sigma)}\ll s(\sigma)-\frac{1}{2}\ll 1.
	\end{equation*}
	The above construction is related to the $\operatorname{Loc}(N)$ condition in Section \ref{sec:Ansatz}.

	\section{Deterministic counterexample}
	In this section, we give the alternative counterexample for $L^r$ maximal estimate for linear Schr\"odinger equation on $\S^2$ with some $r\geq2$.

	\subsection{An arithmetic result}
	
	We recall the following classical result concerning an average order of the Euler indicator function (see Chapter 2,3 of \cite{Tenenbaum})
	$$ \phi(n):=\sum_{\substack{ 1\leq m\leq n\\ 
			\mathrm{gcd}(m,n)=1 } }1. 
	$$ 	
	Recall the Möbius function :
	$$ \mu(n):=
	\begin{cases}
		(-1)^{s},\quad &\text{ if } n=p_1p_2\cdots p_s, \; p_j \text{ are dinstinct primes},\\
		0, \quad &\text{ if } n \text{ is divided by a square of a prime}.
	\end{cases}
	$$
	By the well-known Möbius inversion formula
	$$ \sum_{d|n}\mu(d)=\delta(n),
	$$
	$$ \phi(n)=\sum_{d|n}\mu(d)\frac{n}{d}.
	$$
	Consequently, for any $\lambda\geq 1$,
	\begin{align*}
		\sum_{n\leq \lambda}\frac{\phi(n)}{n}=&\sum_{n\leq \lambda}\sum_{d|n}\frac{\mu(d)}{d}
		=\sum_{d\leq \lambda}\frac{\mu(d)}{d}\left\lfloor 
		\frac{\lambda}{d}
		\right\rfloor\\
		=&\lambda\sum_{d\leq \lambda}\frac{\mu(d)}{d^2}+O(\log\lambda).
	\end{align*}
	Again, using the Möbius inversion formula, we have
	$$ \Big(\sum_{d\geq 1}\frac{\mu(d)}{d^2}\Big)\Big(\sum_{d\geq 1}\frac{1}{d^2}\Big)=1.
	$$
	We deduce that
	\begin{align}\label{eq:ordresmoyennes}
		\sum_{n\leq \lambda}\frac{\phi(n)}{n}=\frac{6}{\pi^2}\lambda+O(\log\lambda),\quad \lambda\rightarrow+\infty.
	\end{align}
	Using this and Chebyshev, we have the following consequence :
	\begin{lemma}\label{lem:ordresmoyennes}
		Let $b>a>0$ be two constants. Then for $c_0\in(0,6/\pi^2)$ and for sufficiently large $\lambda$,
		$$ \#\Big\{ n\in [a\lambda,b\lambda]: \phi(n)\geq \Big(\frac{6}{\pi^2}-c_0\Big)n \Big\}\geq (b-a)\lambda-O_{c_0}(\log\lambda),
		$$
		as $\lambda\rightarrow+\infty$.
	\end{lemma}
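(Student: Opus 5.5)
The plan is to combine the average-order asymptotic \eqref{eq:ordresmoyennes} with a Chebyshev (Markov-type) counting argument on the window $[a\lambda,b\lambda]$. First, I would subtract \eqref{eq:ordresmoyennes} at $b\lambda$ and at $a\lambda$ to get
\begin{align*}
\sum_{a\lambda\le n\le b\lambda}\frac{\phi(n)}{n}=\frac{6}{\pi^2}(b-a)\lambda+O(\log\lambda).
\end{align*}
Next I would split the window into the good set $G=\{n:\phi(n)\ge(\frac{6}{\pi^2}-c_0)n\}$ and its complement $B$, with $\#G+\#B=(b-a)\lambda+O(1)$. I would then use the trivial bound $\phi(n)/n\le 1$ on $G$ and the defining bound $\phi(n)/n<\frac{6}{\pi^2}-c_0$ on $B$:
\begin{align*}
\frac{6}{\pi^2}(b-a)\lambda-O(\log\lambda)\le \#G+\Big(\frac{6}{\pi^2}-c_0\Big)\big((b-a)\lambda-\#G\big).
\end{align*}
Rearranging gives
\begin{align*}
\#G\ge \frac{c_0}{1-\frac{6}{\pi^2}+c_0}\,(b-a)\lambda-O_{c_0}(\log\lambda).
\end{align*}

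The step I expect to be the real obstacle is upgrading the prefactor $\frac{c_0}{1-6/\pi^2+c_0}$ to $1$, as the statement is worded. The Chebyshev argument cannot do this, because it only uses the mean of $\phi(n)/n$ and not its distribution. In fact I do not see how any argument can do it, and I believe the statement as worded is false for small $c_0$. Every even $n$ has $\phi(n)/n\le\frac12$, and $\frac12<\frac{6}{\pi^2}-c_0$ whenever $c_0<\frac{6}{\pi^2}-\frac12\approx 0.108$. So for such $c_0$ roughly half of the window lies in $B$, and $\#G\le\frac12(b-a)\lambda+O(1)$, which contradicts the claimed bound $(b-a)\lambda-O(\log\lambda)$.

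What the argument above does prove is the positive-proportion bound: $\#G\ge\kappa(c_0)(b-a)\lambda-O_{c_0}(\log\lambda)$ with $\kappa(c_0)=\frac{c_0}{1-6/\pi^2+c_0}\in(0,1)$. I expect this is the form the later counterexample in this section actually needs, since it only has to find many $n\in[a\lambda,b\lambda]$ with $\phi(n)\gtrsim n$. I would therefore state and prove the lemma in this positive-proportion form.
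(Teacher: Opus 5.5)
Your diagnosis is correct. The lemma as worded is false, and what you prove is the strongest conclusion the first-moment asymptotic can give.

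Your counterexample is valid: for even $n$ we have $\phi(n)/n=\prod_{p\mid n}(1-1/p)\le\frac12$. In fact the statement fails for every $c_0\in(0,6/\pi^2)$. Choose $k$ with $\prod_{i\le k}(1-1/p_i)<\frac{6}{\pi^2}-c_0$, which is possible since $\sum_p 1/p=\infty$. Then all multiples of $p_1\cdots p_k$ in $[a\lambda,b\lambda]$ lie in your bad set $B$, and there are $\frac{(b-a)\lambda}{p_1\cdots p_k}+O(1)$ of them.

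The paper's proof starts from the same window asymptotic as yours. It then applies ``Chebyshev'' to $\frac{6}{\pi^2}-\frac{\phi(n)}{n}$ and concludes that at most $\frac{1}{c_0}O(\log\lambda)$ integers have this quantity $\ge c_0$. That step is invalid. Markov's inequality needs a nonnegative quantity, but these summands change sign: at primes they are about $\frac{6}{\pi^2}-1<0$. So an $O(\log\lambda)$ bound on their sum says nothing about how many of them are large. Your use of $\phi(n)/n\le 1$ on $G$ is exactly the correct version of that step, namely Markov applied to the nonnegative quantity $1-\phi(n)/n$. It produces your constant $\kappa(c_0)=\frac{c_0}{1-6/\pi^2+c_0}$, and the count you derive is right.

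One caveat concerns your remark about the application. The set $\mathcal{E}$ is built only from odd $p$. Knowing that a proportion $\kappa(c_0)$ of all integers in the window are good is therefore not enough by itself, since a priori the good ones could all be even. This is easy to repair in either of two ways:
\begin{enumerate}
\item[(i)] Take $c_0\in(1-\frac{6}{\pi^2},\frac{6}{\pi^2})$. Then $\kappa(c_0)>\frac12$, so at least $(\kappa(c_0)-\frac12)(b-a)\lambda-O(\log\lambda)$ of the good integers are odd.
\item[(ii)] Rerun your argument over odd $n$ only, using $\sum_{n\le x,\ n\ \mathrm{odd}}\frac{\phi(n)}{n}=\frac{4}{\pi^2}x+O(\log x)$, so the mean over odd $n$ is $\frac{8}{\pi^2}$.
\end{enumerate}
Either way you get $\gtrsim\sqrt N$ odd $p\in(C_0^{-1}\sqrt N,C_0\sqrt N)$ with $\phi(p)\gtrsim p$. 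Combined with the count $\frac{\phi(p)}{4}+O(p^{\varepsilon})$ of admissible even $q\in(0,p/2)$ coprime to $p$, this gives $\gtrsim N$ intervals. That is what $\operatorname{Mes}(\mathcal{E})\sim 1$ actually requires; note the paper's ``at most $\phi(p)$'' there should be a lower bound.
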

	\begin{proof}
		By \eqref{eq:ordresmoyennes},
		$$ \sum_{a\lambda\leq n\leq b\lambda} \frac{\phi(n)}{n}=\frac{6}{\pi^2}(b-a)\lambda +O(\log\lambda).
		$$	
		By Chebyshev, we have
		\begin{align*}
			\#\Big\{n \in[a\lambda,b\lambda]: \frac{6}{\pi^2}-\frac{\phi(n)}{n}\geq  c_0 \Big\}\leq & \frac{1}{c_0}O(\log\lambda).
		\end{align*}
		This completes the proof.
	\end{proof}

	\subsection{Proof of Theorem \ref{thm:maxestcount}}
	Since $\S^2$ can be embedded into $\R^3$, we can write
	\begin{equation*}
		\S^2 := \{ (x_1,x_2,x_3)\in\R^3 : x_1^2 + x_2^2 + x_3^2 =1 \}.
	\end{equation*}
	Thus, the highest weight spherical harmonic is given by $\phi_n(x)=(x_1+ix_2)^n$. Using polar coordinates, we can further rewrite $\phi_n(x)=(\lambda e^{i\theta})^n$,
	with
	\begin{equation*}
		\lambda = \sqrt{x_1^2+x_2^2},\quad \theta = \arctan\left( \frac{x_2}{x_1}\right).
	\end{equation*}
	Here $\lambda\in[0,1]$ and $\theta\in[0,2\pi)$.
	Direct computation shows
	$$	 \|\phi_n\|_{H^s(\mathbb{S}^2)}\sim n^{s-\frac{1}{4}}.
	$$
	Let $\varphi$ be a  bump function $\varphi$ satisfying
	\begin{equation*}
		\supp\varphi \subset \left(\frac{1}{2},\frac{5}{2}\right);\,\,\varphi(x)\equiv 1,\,\,\mbox{if }  x\in \left[\frac{3}{4},\frac{9}{4}\right].
	\end{equation*}
	Let $f_N:= \sum\limits_{n\in\Z} \varphi(\frac{n}{N}) \phi_n(x)$ where $N\geq2$ is a dyadic number. Thanks to the orthogonality, we have 
	\begin{equation*}
		\|f_N\|_{\dot{H}^s(\S^2)} \lesssim \Big( \sum_{n=0}^{3N} \|\phi_n\|^2_{H^s(\S^2)} \Big)^{\frac{1}{2}} \sim \Big( \sum_{n=0}^{3N} n^{2s-\frac{1}{2}} \Big)^{\frac{1}{2}} \sim N^{s+\frac{1}{4}}.
	\end{equation*}
	By the spectral resolution, one can write
	\begin{align*}
		e^{it\Delta_{\S^2}}f_N:=\sum_{n\in\mathbb{Z}}\varphi\big( \frac{n}{N}\big) e^{itn(n+1)}e^{in\theta}\lambda^n.
	\end{align*}
	For convenience, we denote $S_N(t,\theta,\lambda):=e^{it\Delta_{\S^2}}f_N$.
	Also, we define the maximal Schr\"odinger operator by
	\begin{equation*}
		MS_N(\theta,\lambda) = \sup_{t\in[0,1]} \left\vert \sum_{n\in\mathbb{Z}}\varphi\big( \frac{n}{N}\big) e^{itn(n+1)}e^{in\theta}\lambda^n \right\vert.
	\end{equation*}

	By direct computation, one can verify that
	\begin{align}\label{polar}
		\Big\|\sup_{t\in [0,1]}|e^{it\Delta_{\S^2}}f_N|\Big\|_{L^r(\S^2)}^r = 2\int_0^1 \int_0^{2\pi} \frac{\lambda}{\sqrt{1-\lambda^2}} \vert MS_N(\theta,\lambda)\vert^r \mathrm{d}\theta \mathrm{d}\lambda.
	\end{align}
	Then, it is sufficient to give a pointwise lower bound for $MS_N(\theta,\lambda)$.
	
	We assert that there exists $C>0$ independent of $\theta,\lambda, N$ and a measurable set $\mathcal{E}$ (defined in \eqref{fml-def-setE}) with $\operatorname{Mes}(\mathcal{E})\sim 1$ (where $\operatorname{Mes}$ denotes Lebesgue measure on $[0,2\pi]$), such that
	\begin{equation}\label{fml-asser-MSN-lower}
		\left| MS_N(\theta,\lambda)\right| \ge C N^{\frac{3}{4}}\lambda^{N}, \quad \text{if}\,\, (\theta,\lambda) \in \mathcal{E}\times(1-N^{-1-},1).
	\end{equation}
	With this claim, we can prove the lower bound for maximal estimate. Using \eqref{polar} and \eqref{fml-asser-MSN-lower}, we have
	\begin{equation*}
		\begin{aligned}
			\Big\|\sup_{t\in [0,1]}|e^{it\Delta_{\S^2}}f_N|\Big\|_{L^r(\S^2)}^r &\gtrsim N^{\frac{3r}{4}} \int_{1-N^{-1-}}^1 \int_0^{2\pi} \frac{\lambda^{Nr+1}}{\sqrt{1-\lambda^2}} \mathbf{1}_{\mathcal{E}}(\theta) \mathrm{d}\theta \mathrm{d}\lambda\\
			&\gtrsim N^{\frac{3r}{4}-\frac{1}{2}-}.
		\end{aligned}
	\end{equation*}
	It implies
	\begin{equation*}
		\begin{aligned}
			\Big\|\sup_{t\in [0,1]}|e^{it\Delta_{\S^2}}f_N|\Big\|_{L^r(\S^2)}
			\gtrsim N^{\frac{3}{4}-\frac{1}{2r}-}.
		\end{aligned}
	\end{equation*}
	
	Since $\|f_N\|_{H^s} \lesssim N^{s+\frac{1}{4}}$, thus for $s<\frac{1}{2}-\frac{1}{2r}$
	\begin{equation*}
		\Big\| \sup_{t\in[0,1]} \left| e^{it\Delta}f_N \right| \Big\|_{L^r_x} > C\|f_N\|_{\dot{H}^s}.
	\end{equation*}
	Therefore, we conclude the proof of Theorem \ref{thm:maxestcount}.
	
	Now, we turn to prove  \eqref{fml-asser-MSN-lower}. 
	We take $C_0^{-1}N^{\frac{1}{2}}<p<C_0N^{\frac{1}{2}}$ for a large integer $C_0$ to be fixed later. We also postponed the argument on $\mathcal{E}$ later.

	Next, for an odd integer $p$ and an even integer $q$ such that $\mathrm{gcd}(p,q)=1$, by taking $t = \frac{2\pi}{p}$ and $\theta = \frac{2\pi q}{p}$, one obtains
	\begin{equation*}
		\begin{aligned}
			S_N\left(\frac{2\pi}{p},\frac{2\pi q}{p},\lambda\right) &= \sum_{n\in\mathbb{Z}}\varphi\left( \frac{n}{N}\right) \lambda^n e^{2\pi i\frac{ n(n+1)}{p}} e^{2\pi i\frac{nq}{p}}\\
			&=\sum_{k=1}^{p} \sum_{m\in\mathbb{Z}} \varphi\left( \frac{pm+k}{N}\right)\lambda^{pm+k} e^{2\pi i\frac{ (pm+k)(pm+k+1)}{p}} e^{2\pi i(pm+k)\frac{q}{p}}\\
			&= \sum_{k=1}^p e^{2\pi i(\frac{k^2+k(q+1)}{p})} \sum_{m\in\mathbb{Z}}\varphi\left( \frac{pm+k}{N}\right) \lambda^{pm+k}.
		\end{aligned}
	\end{equation*}
	For $x\in\R$, we define
	\begin{equation*}
		F(x) = G\left(\frac{px+k}{N}\right) = \varphi(\frac{px+k}{N})\lambda^{px+k}.
	\end{equation*}
	Thus, by introducing the change of variable $y = \frac{px+k}{N}$ we obtain
	\begin{equation*}
		\begin{aligned}
			\widehat{F}(\xi) &= \int_{\R} \varphi(\frac{px+k}{N})\lambda^{px+k} e^{-2\pi ix\xi} \mathrm{d} x =e^{2\pi i\frac{k}{p}\xi}\frac{N}{p} \widehat{G}\left(\frac{N\xi}{p}\right).
		\end{aligned}
	\end{equation*}
	
	By Poisson's summation, we have
	\begin{equation*}
		\sum_{m\in\mathbb{Z}}\varphi\left( \frac{pm+k}{N}\right) \lambda^{pm+k} = \sum_{\xi\in\Z}e^{2\pi i\frac{k}{p}\xi}\frac{N}{p} \widehat{G}\left(\frac{N\xi}{p}\right) = \frac{N}{p}\widehat{G}(0) + \sum_{\xi\in\Z\backslash\{0\}}e^{2\pi i\frac{k}{p}\xi}\frac{N}{p} \widehat{G}\left(\frac{N\xi}{p}\right),
	\end{equation*}
	which allows us split $S_N(2\pi/p,2\pi q/p,\lambda)$ into two parts:
	\begin{equation*}
		S_N\left(\frac{2\pi}{p},\frac{2\pi q}{p},\lambda\right) = \underbrace{\frac{N}{p}\sum_{k=1}^p e^{2\pi i(\frac{k^2+k(q+1)}{p})} \widehat{G}(0)}_{\mathcal{A}}  + \underbrace{\frac{N}{p}\sum_{k=1}^p e^{2\pi i(\frac{k^2+k(q+2)}{p})} \sum_{\xi\in\mathbb{Z}\backslash\{0\}}\widehat{G}\left( \frac{N\xi}{p} \right)}_{\mathcal{B}}.
	\end{equation*}
	
	\textbf{\underline{Bound on the main part $\mathcal{A}$} :} 
	Direct computation yields that
	\begin{equation*}
		\frac{N}{p}\widehat{G}(0) \ge \frac{N}{p} \int_{\frac{3}{4}}^1 \lambda^{Nx} \mathrm{d} x \ge \frac{N}{p} \lambda^N.
	\end{equation*}
	Next, by the Gauss sum \cite{Hardy}, we get
	\begin{align}\label{fml-Gaussian}
		\sum_{n=0}^{p-1} e^{2\pi i\frac{n^2 + (q+1)n}{p}} = \omega_p \sqrt{p}e^{2\pi i\frac{m(q+1)^2}{p}},
	\end{align}
	where $4m\equiv 1(\operatorname{mod} p)$ and
	\begin{align*}
		\omega_p =
		\begin{cases}
			1,\quad &p\equiv 1 (\operatorname{mod} 4),\\
			-i,\quad &p\equiv 3 (\operatorname{mod} 4).
		\end{cases}
	\end{align*}
	For $C_0^{-1}N^{\frac{1}{2}}<p<C_0N^{\frac{1}{2}}$ with a positive integer $C_0>0$, we have for sufficiently large $N$, 
	\begin{equation}\label{fml-G0}
		|\mathcal{A}| \ge \sqrt{p}\cdot \lambda^N\frac{N}{p} \geq C_0^{-\frac{1}{2}}N^{\frac{3}{4}}\lambda^N.
	\end{equation}
	
	\textbf{\underline{Bound on the remainder $\mathcal{B}$} : } 
	Applying integrating by parts, for all $k\in \N$, we get
	\begin{equation*}
		\left|\widehat{G}\left( \frac{N\xi}{p} \right)\right| = \left( \frac{p}{N\xi} \right)^k \int_{\frac{1}{2}}^{\frac{5}{2}}\left| \frac{\mathrm{d}^k}{\mathrm{d}y^k}\left( \varphi(y)\lambda^{Ny} \right) \right| \mathrm{d} y.
	\end{equation*}
	Since $\varphi(y)$ is smooth enough, we only need consider the case that all derivatives fall on $\lambda^{Ny}$. Recall that $\lambda\in(1-N^{-1-},1)$, we have a pointwise bound:
	\begin{equation}\label{fml-lambdaN-pointwise}
		\left| \frac{\mathrm{d}^k}{\mathrm{d}y^k}(\lambda^{Ny}) \right| = \lambda^{Ny} |N\log(\lambda)|^k \le \lambda^{Ny}.
	\end{equation}
	Recall that $\lambda\in(1-N^{-1-},1)$, we can find for all $k\in\N$ it follows that $(N/p)^k\lambda^{N/2}\le \lambda^N$. With this in hand, we take $k=2$ and apply \eqref{fml-Gaussian} to get that
	\begin{equation}\label{fml-Gxi-ne0}
		\begin{aligned}
			\left| \mathcal{B} \right| \le \sqrt{p}\cdot \left| \sum_{\xi\in\Z\backslash\{0\}} \frac{N}{p} \widehat{G}\left(\frac{N\xi}{p}\right)\right| \le 4 \sum_{\xi\in\Z\backslash\{0\}} |\xi|^{-2} \frac{p^{\frac{3}{2}}}{N} \ll N^{\frac{3}{4}} \lambda^N.
		\end{aligned}
	\end{equation}
	From \eqref{fml-lambdaN-pointwise}, the above estimate is independent of $k$ and $N$.
	
	Combine \eqref{fml-G0} with \eqref{fml-Gxi-ne0}, we get
	\begin{equation}\label{fml-SN-bound}
		\left|S_N\left(\frac{2\pi}{p},\frac{2\pi q}{p},\lambda\right)\right| \ge |\mathcal{A}| - |\mathcal{B}| \ge \frac{1}{2} C_0^{-\frac{1}{2}}N^{\frac{3}{4}} \lambda^N.
	\end{equation}
	\medskip

	\textbf{\underline{Construction of $\mathcal{E}$} : }

	Then, we turn to construct the set $\mathcal{E}$, following the strategy in \cite{MV}. For odd integer $p$ and even integer $q$, we define 
	\begin{equation*}
		E(N,p,q) := \left( \frac{2\pi q}{p}-\frac{1}{10C_0^2\pi N},\frac{2\pi q}{p}+\frac{1}{10C_0^2\pi N} \right). 
	\end{equation*}
	Next, we also define
	\begin{equation}\label{fml-def-setE}
		\mathcal{E} = \bigcup_{\substack{p \text{ odd}\\ C_0^{-1}N^{1/2}<p<C_0N^{1/2}}} \bigcup_{\substack{q : \mathrm{gcd}(p,q)=1\\ 2\pi q/p \in (0,\pi) }} E(N,p,q).
	\end{equation}
	We observe that the intervals $E(N,p,q)$ are disjoint. Indeed, for distinct pairs  $(p_1,q_1),(p_2,q_2)$, since $\gcd(p_j,q_j)=1,j=1,2$, we have such that $\frac{q_1}{p_1}\neq\frac{q_2}{p_2}$. Thus
	$$ \Big|\frac{q_1}{p_1}-\frac{q_2}{p_2}\Big|=\frac{|p_1q_2-p_2q_1|}{p_1p_2}\geq \frac{1}{p_1p_2}\geq \frac{1}{C_0^2N}.
	$$
	It follows immediately that $E(N,p_1,q_1)\cap E(N,p_2,q_2)=\emptyset$.
	
	Since for fixed $p\in (C_0^{-1}\sqrt{N},C_0\sqrt{N} )$, the number of intervals $E(N,p,q)$ defining $\mathcal{E}$ is at most $\phi(p)$, where $\phi(p)$ is the Euler indicator function, by Lemma \ref{lem:ordresmoyennes}, we deduce that there are at least $cN$ such intervals. Consequently, $\mathrm{Mes}(\mathcal{E})\sim 1$.

	\medskip
	
	To complete the proof,	for $\widetilde{\theta}$ satisfying $|\frac{2\pi q}{p} - \widetilde{\theta}| \le \frac{1}{10C_0^2\pi N}$, the mean-value theorem yields
	\begin{equation}\label{fml-SN-diff}
		\begin{aligned}
			&\left| S_{N}\left( \frac{2\pi}{p},\widetilde{\theta},\lambda \right) - S_{N}\left( \frac{2\pi}{p},\frac{2\pi q}{p},\lambda \right) \right|\\
			=&\Big| \sum_{n\in\Z}\varphi\left(\frac{n}{N}\right) \lambda^n e^{2\pi i\frac{n(n+1)}{p}} \left( e^{2\pi i\frac{nq}{p}} - e^{in\widetilde{\theta}} \right)\Big|\\
			\leq&\Big| \sum_{k=1}^p e^{2\pi i(\frac{k^2+k(q+1)}{p})}\left( 1 - e^{ik(\widetilde{\theta}-(2\pi q)/p)} \right) \sum_{|m|\sim N/p}\varphi\left( \frac{pm+k}{N}\right) \lambda^{pm+k} \Big|\\
			& +\Big| \sum_{k=1}^p e^{2\pi i(\frac{k^2+k}{p})} e^{ik\widetilde{\theta}} \sum_{|m|\sim N/p}\varphi\left( \frac{pm+k}{N}\right) \left( 1 - e^{imp\tilde{\theta}} \right) \lambda^{pm+k}\Big|\\
			\le& \frac{p^2}{N} \cdot \frac{N}{p} \lambda^{\frac{N}{2}} + \frac{1}{10C_0^2}\cdot \sqrt{p}\frac{N}{p}\lambda^{\frac{N}{2}}\\
			\le& \frac{1}{4} C_0^{-\frac{3}{2}} N^{\frac{3}{4}} \lambda^N.
		\end{aligned}
	\end{equation}
	In the last inequality, we have used the fact that $\lambda\in (1-N^{-1-},1)$ and $N$ sufficiently large such that $\lambda^{N/2} \le 3 \lambda^{N}$.
	
	Consequently, combining \eqref{fml-SN-bound} with \eqref{fml-SN-diff}, and take $C_0$ sufficiently large, we have proved \eqref{fml-asser-MSN-lower} which accomplishes the proof of Theorem \ref{thm:maxestcount}.
	
	\section{Linear Schr\"odinger equation}\label{section-LSE}
	
	In this section, our main goal is to prove Theorem \ref{thm:2}, the almost sure convergence for the linear flow associated with randomized initial data. 
	
	\begin{proposition}\label{prop:convergence of linear solution}
		Let $\phi_\alpha^\omega$ be as in \eqref{randomize initial data}. Then, for any $\varepsilon>0$,
		\begin{equation}\label{convergence of linear solution}
			\P\bigg(\bigg\{\omega\in\Omega \,\bigg|\, \lim_{t\to0}\big\|e^{it(\Delta_g-1)}\phi_\alpha^\omega-\phi_\alpha^\omega\big\|_{L_x^\infty(\S^2)}=0\bigg\}\bigg)\ge 1-\varepsilon.
		\end{equation}
	\end{proposition}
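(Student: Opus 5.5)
We will show that, almost surely, $t\mapsto e^{it(\Delta_g-1)}\phi_\alpha^\omega$ is continuous in time with values in $C^0(\S^2)$. The proposition then follows at once, and in fact with probability one rather than only $1-\varepsilon$. The tool is Kolmogorov's continuity criterion, or equivalently a Sobolev embedding in $(t,x)$, on the compact manifold $[-1,1]\times\S^2$, driven by Gaussian moment bounds. Write
\[
u^\omega(t,x):=e^{it(\Delta_g-1)}\phi_\alpha^\omega(x)=\sum_n \lambda_n^{-\alpha}e^{-it\lambda_n^2}\sum_{|k|\le n}g_{n,k}^\omega\mathbf{b}_{n,k}(x).
\]
For fixed $(t,x)$ this is a centered Gaussian variable.

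\textbf{Step 1: moment bounds on increments.} For $(t,x),(s,y)\in[-1,1]\times\S^2$, Khinchin's inequality (Lemma \ref{lem:concentration}) gives
\[
\|u(t,x)-u(s,y)\|_{L^p_\omega}\le C\sqrt p\Big(\sum_n\lambda_n^{-2\alpha}\sum_{|k|\le n}|e^{-it\lambda_n^2}\mathbf{b}_{n,k}(x)-e^{-is\lambda_n^2}\mathbf{b}_{n,k}(y)|^2\Big)^{1/2}.
\]
Split each term into the time difference $|e^{-it\lambda_n^2}-e^{-is\lambda_n^2}|^2\sum_k|\mathbf{b}_{n,k}(x)|^2$ and the space difference $\sum_k|\mathbf{b}_{n,k}(x)-\mathbf{b}_{n,k}(y)|^2$.

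For the time part, use the local Weyl law \eqref{Local-Weyl} together with $|e^{-it\lambda_n^2}-e^{-is\lambda_n^2}|\le\min(2,\lambda_n^2|t-s|)$. The space part is the squared distance in $L^2$ of the reproducing-kernel vectors. Since the projector kernel satisfies $\pi_n(x,y)=(2n+1)P_n(x\cdot y)$ by zonal symmetry, the space part equals $2(2n+1)(1-P_n(x\cdot y))\lesssim n\min(1,n^2|x-y|^2)$.

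Summing over $n$ with $\lambda_n^{-2\alpha}\sim n^{-2\alpha}$ gives
\[
\sum_n n^{1-2\alpha}\min\big(1,n^4|t-s|^2+n^2|x-y|^2\big)\lesssim |t-s|^{\beta}+|x-y|^{2\beta'}.
\]
This holds for some small $\beta,\beta'>0$, because $\alpha>1$ makes $\sum n^{1-2\alpha+\epsilon}$ converge. Consequently
\[
\|u(t,x)-u(s,y)\|_{L^p_\omega}\le C\sqrt p\,\big(|t-s|^{\beta/2}+|x-y|^{\beta'}\big).
\]

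\textbf{Step 2: Kolmogorov/GRR.} The parameter space $[-1,1]\times\S^2$ has dimension $3$. Choose $p$ large enough that $p\min(\beta/2,\beta')>3$. Kolmogorov's continuity theorem, or the Garsia–Rodemich–Rumsey inequality on this compact 3-manifold, then yields a modification of $u^\omega$ that is Hölder continuous in $(t,x)$ almost surely. Moreover $\E\|u\|_{C^{\gamma}([-1,1]\times\S^2)}^p<\infty$ for some small $\gamma>0$.

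There is a subtlety about identifying this modification with the series. One can avoid it by working with the frequency truncations $u_N=P_{\le N}u$, which are smooth. Apply the same bound to $u-u_N$, which carries an extra factor $N^{-\epsilon}$. Borel–Cantelli over dyadic $N$ then shows that $u_N\to u$ uniformly on $[-1,1]\times\S^2$ almost surely. Hence $u$ is a uniform limit of continuous functions, so it is continuous, and it agrees with the series.

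\textbf{Step 3: conclusion.} Uniform continuity on the compact set gives $\sup_x|u(t,x)-u(0,x)|\to0$ as $t\to0$ almost surely. This is \eqref{convergence of linear solution}, with probability $1\ge1-\varepsilon$.

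The main obstacle is the spatial increment in Step 1, namely obtaining $\sum_k|\mathbf{b}_{n,k}(x)-\mathbf{b}_{n,k}(y)|^2\lesssim n\min(1,n^2|x-y|^2)$ uniformly. The plan handles it through the addition theorem and the bound $|P_n'|\le n^2$ on $[-1,1]$. Should this fall short, the fallback is to replace spatial Hölder control with Sobolev embedding $W^{\epsilon,p}_x\subset C^0$ for $p$ large, and to estimate $\E\|\langle\nabla\rangle^{\epsilon}u_N\|^p_{L^p}$ directly by Weyl's law.
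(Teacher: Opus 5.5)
Your proof is correct, but it takes a different route from the paper.

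\textbf{The paper's route.} The paper never estimates increments. It splits $\phi_\alpha^\omega$ into dyadic shells. It bounds $\|P_N\phi_\alpha^\omega\|_{L^\infty_x}$ and $\|e^{it(\Delta_g-1)}P_N\phi_\alpha^\omega\|_{L^\infty_{t,x}}$ by $N^{1-\alpha+\eta}\log N$ outside an event of probability $\lesssim N^{-4}$. This uses Khinchin at fixed $(t,x)$ with only the pointwise Weyl law, then Minkowski and Markov, then Bernstein on the shell to pass from $L^p$ to $L^\infty$. Borel--Cantelli and summation of the dyadic tail uniformly in $t$ finish the argument, the low frequencies being smooth.

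\textbf{Your route.} You bound Gaussian moments of the increments in $(t,x)$ and invoke Kolmogorov/GRR on $[-1,1]\times\S^2$. This needs an extra spatial input:
$\sum_k|\mathbf b_{n,k}(x)-\mathbf b_{n,k}(y)|^2=2(2n+1)(1-P_n(x\cdot y))\lesssim n\min(1,n^2|x-y|^2)$.
Your input is right: with area normalized to $1$ one has $\pi_n(x,y)=(2n+1)P_n(x\cdot y)$, and $|P_n'|\le n(n+1)/2$ on $[-1,1]$. The summation then closes for any exponent $\theta<(\alpha-1)/2$.

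\textbf{What each buys.} Your approach gives more: joint H\"older continuity and $\E\|u\|_{C^\gamma}^p<\infty$, without Bernstein's inequality. However, it relies on the zonal structure of $\S^2$. More importantly, it relies on the joint Gaussianity of $u(t,x)-u(s,y)$ across different times. That is exactly what is unavailable for the colored terms $\psi_N$ in Section \ref{sec:nonlinear-pointwise}, where Lemma \ref{lem-invariance} only identifies the law at a single fixed time. The paper's shell-by-shell argument (fixed-time law, then Bernstein, then $L^\infty$) is the template reused there. The time-increment control in that section comes deterministically from the Fourier--Lebesgue H\"older bound on a time grid.

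\textbf{A small correction in your Step 2.} Apply GRR to $u_M-u_N$ for $M>N$, which are smooth. Together with the pointwise bound $\|(u_M-u_N)(0,x_0)\|_{L^p_\omega}\lesssim\sqrt p\,N^{1-\alpha}$, this gives $\E\sup|u_M-u_N|^p\lesssim N^{-p\epsilon}$ uniformly in $M$, and then you pass to the limit. Applying GRR directly to $u-u_N$ presupposes the continuity you are proving, unless you first pass to a continuous modification.
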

	
	To handle the linear and nonlinear problems within a common notation, we only prove the convergence for $e^{it(\Delta_g-1)}\phi_\alpha^\omega$. Setting $v=e^{it}u$, where $u=e^{it(\Delta_g-1)}\phi_\alpha^\omega$, immediately yields the convergence for $e^{it\Delta_g}$.
	
	Let $\mathbf{b}_{n,k}(x)$ be an eigenfunction in the $n$-th eigenspace, and let $f(x)$ on $\S^2$ be given by
	\begin{equation}\label{frequency}
		f(x) = \sum_{n=0}^{\infty} \sum_{|k|\le n} c_{n,k} \mathbf{b}_{n,k}(x),
	\end{equation}
	we denote by $P_N$ the frequency projection onto the annulus of size $N\in 2^{\N}$, i.e.
	\begin{equation}\label{frequency projection}
		P_N(f)(x) = \sum_{\frac{N}2 < \vert \lambda_n \vert \le N} \sum_{|k|\le n} c_{n,k} \mathbf{b}_{n,k}(x).
	\end{equation}

	To prove Proposition \ref{prop:convergence of linear solution}, we first present a probabilistic $L^p$ estimate and an improved Strichartz estimate:
	\begin{proposition}\label{prop:large deviation estimate}
		For $ \phi_{\alpha}^{\omega} $ as in \eqref{randomize initial data}, there exists a constant $ C > 0 $ such that
		\begin{equation}\label{large deviation estimate}
			\big\Vert P_N \big( \phi_{\alpha}^{\omega} \big) \big\Vert_{L_{\omega}^p(\Omega)} \le C \sqrt{p} N^{-\alpha + 1}, \qquad \text{\rm uniformly in} ~ x \in \S^2.
		\end{equation}
	\end{proposition}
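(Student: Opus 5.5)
For fixed $x\in\S^2$, $P_N(\phi_\alpha^\omega)(x)$ is a linear combination of the i.i.d. Gaussians $g_{n,k}^\omega$:
\[
P_N(\phi_\alpha^\omega)(x)=\sum_{\frac N2<\lambda_n\le N}\ \sum_{|k|\le n}\lambda_n^{-\alpha}\,\mathbf{b}_{n,k}(x)\,g_{n,k}^\omega .
\]
The plan is to apply Khinchin's inequality \eqref{Ki} from Lemma \ref{lem:concentration} with coefficients $c_{n,k}=\lambda_n^{-\alpha}\mathbf{b}_{n,k}(x)$. This immediately gives
\[
\big\|P_N(\phi_\alpha^\omega)(x)\big\|_{L^p_\omega}\le C\sqrt p\,\Big(\sum_{\frac N2<\lambda_n\le N}\lambda_n^{-2\alpha}\sum_{|k|\le n}|\mathbf{b}_{n,k}(x)|^2\Big)^{1/2}.
\]
Since we work with the real orthonormal basis of Section \ref{section-Pre}, the coefficients are bounded, so the $\ell^2$ sum is finite for each $x$.

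The key step is to evaluate the inner sum uniformly in $x$. For this we use the local Weyl law \eqref{Local-Weyl}, which gives exactly $\sum_{|k|\le n}|\mathbf{b}_{n,k}(x)|^2=2n+1$ for every $x\in\S^2$. This is where the sphere's symmetry enters: the bound has no $x$-dependence, unlike Sogge-type estimates that would lose powers of $n$. The remaining sum is then
\[
\sum_{\frac N2<\lambda_n\le N}(2n+1)\lambda_n^{-2\alpha}.
\]
Since $\lambda_n=\sqrt{n^2+n+1}\sim n$, the range $N/2<\lambda_n\le N$ contains $O(N)$ integers $n$, each of size $\sim N$. Each term is therefore $\lesssim N^{1-2\alpha}$, and the sum is $\lesssim N^{2-2\alpha}$. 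Taking the square root gives $C\sqrt p\,N^{1-\alpha}$ uniformly in $x$, which is \eqref{large deviation estimate}.

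There is essentially no obstacle in this argument. The one point to check is that Khinchin applies to the doubly indexed complex family $\{g_{n,k}\}$. This is just a relabeling of a countable i.i.d. family, so it is fine; alternatively one can note that the sum is a complex Gaussian with variance equal to the $\ell^2$ norm squared and compute its moments directly. An equivalent route would write $P_N\phi_\alpha^\omega=\sum_n z_n^{-(\alpha-\frac12)}e_n^\omega$ and combine independence over $n$ with Lemma \ref{Lp eigen}. The direct Khinchin computation is cleaner, however, because it avoids the triangle inequality over the $\sim N$ shells, which would cost a factor $N^{1/2}$.
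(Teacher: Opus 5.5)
Your proposal is correct and is essentially the paper's proof: Khinchin's inequality \eqref{Ki} at fixed $x$, then the exact local Weyl law \eqref{Local-Weyl}, then counting the $\sim N$ eigenvalues in the shell. The only difference is cosmetic: after replacing $z_n$ by $\lambda_n$, the paper writes the coefficients as $\lambda_n^{-(\alpha-\frac12)}(2n+1)^{-1/2}\mathbf{b}_{n,k}(x)$, which agrees with your $\lambda_n^{-\alpha}\mathbf{b}_{n,k}(x)$ up to harmless constants.
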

	
	\begin{proof}
		By the definition of $ P_N $ in \eqref{frequency projection}, one has
		\begin{equation*}
			P_N \big( \phi_{\alpha}^{\omega} \big) = \sum_{\frac{N}2 < \lambda_n \le N} \frac1{\lambda_n^{\alpha - \frac12}} \frac1{\sqrt{2n + 1}} \sum_{\vert k \vert \le n} g_{n, k}(\omega) \mathbf{ b}_{n, k}(x).
		\end{equation*}
		Hence, one can make the use of Khinchin's inequality for
		\begin{align}\label{def-cnk}
			c_{n, k} = \frac1{\lambda_n^{\alpha - \frac12}} \frac1{\sqrt{2n + 1}} \mathbf{ b}_{n, k}(x)
		\end{align}
		to obtain
		\begin{equation*}
			\text{LHS of} ~ \eqref{large deviation estimate} \le C \sqrt{p} \bigg( \sum_{\frac{N}2 < \lambda_n \le N} \frac1{\lambda_n^{2\alpha - 1}} \frac1{2n + 1} \sum_{\vert k \vert \le n} \vert \mathbf{ b}_{n, k}(x) \vert^2 \bigg)^{\frac12}.
		\end{equation*}
		Then, by the exact Weyl's law \eqref{Local-Weyl}, we obtain
		\begin{equation*}
			\big\Vert P_N \big( \phi_{\alpha}^{\omega} \big) \big\Vert_{L_{\omega}^p(\Omega)} \le C \sqrt{p} N^{-\alpha + 1}.
		\end{equation*}
	\end{proof}
	
	\begin{remark} \label{remark:frequency of linear solution}
		Similar as Proposition \ref{prop:large deviation estimate}, one can use $ \tilde{c}_{n,k} = \frac{e^{-it \lambda_n^2}}{\lambda_n^{\alpha-\frac12}} c_{n,k} $ instead of $ c_{n,k} $ in \eqref{def-cnk} to obtain
		\begin{equation}\label{frequency of linear solution}
			\big\Vert P_N \big( e^{it(\Delta_g-1)} \phi_\alpha^\omega \big) \big\Vert_{L_{\omega}^p(\Omega)} \le C \sqrt{p} N^{-\alpha +1}, \qquad \text{\rm uniformly in} ~ (t, x) \in \R \times \S^2.
		\end{equation}
	\end{remark}
	
	Next, we show that 
	
	\begin{corollary}[Improved $ L^p $ estimate]\label{coro:probability of linear solution}
		For any $ \varepsilon > 0 $, we have 
		\begin{equation}\label{probability of linear solution}
			\P \bigg( \bigg\{ \omega \in \Omega ~ \bigg\vert ~ \big\Vert P_N \big( e^{it(\Delta_g-1)} \phi_\alpha^\omega \big) \big\Vert_{L_x^p(\S^2)} \lesssim N^{-\alpha+1} \ln \big( \frac1{\sqrt{\varepsilon}} \big) \bigg\} \bigg) \ge 1 - \varepsilon  
		\end{equation}
		holds for any $ 2 \le p < \infty $, $ N \in 2^{\N} $ and
		\begin{equation}\label{probability of linear solution1}
			\P \bigg( \bigg\{ \omega \in \Omega ~ \bigg\vert ~  \big\Vert P_N \big( e^{it(\Delta_g-1)} \phi_\alpha^\omega \big) \big\Vert_{L_x^{\infty}(\S^2)} \lesssim N^{-\alpha+1 +} \ln \big( \frac1{\sqrt{\varepsilon}} \big) \bigg\} \bigg) \ge 1 - \varepsilon
		\end{equation}
		holds for any $ N \in 2^{\N} $.
	\end{corollary}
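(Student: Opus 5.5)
The plan is to combine the pointwise moment bound of Remark \ref{remark:frequency of linear solution} with Minkowski's inequality and Chebyshev's inequality. For $L^\infty$, we pass to a large finite $p$ using Bernstein/Sobolev on the frequency-localized function.

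\textbf{Step 1 (finite $p$).} Fix $t$, $N$ and $2\le p<\infty$. For any $r\ge p$, Minkowski's integral inequality (as $r\ge p$) and Fubini give
\begin{equation*}
\Big\| \big\| P_N\big(e^{it(\Delta_g-1)}\phi_\alpha^\omega\big)\big\|_{L_x^p(\S^2)}\Big\|_{L^r_\omega} \le \Big\| \big\|P_N\big(e^{it(\Delta_g-1)}\phi_\alpha^\omega\big)\big\|_{L^r_\omega}\Big\|_{L^p_x} \le C\sqrt{r}\,N^{-\alpha+1},
\end{equation*}
using \eqref{frequency of linear solution} uniformly in $x$ and $|\S^2|=1$. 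By Chebyshev,
\[
\P\big(\|P_N(\cdot)\|_{L^p_x}>\Lambda N^{-\alpha+1}\big)\le \big(C\sqrt r/\Lambda\big)^r .
\]
Optimizing in $r$ (take $r\sim \Lambda^2/(eC)^2$) yields the Gaussian tail $e^{-c\Lambda^2}$. Choosing $\Lambda\sim \sqrt{\ln(1/\varepsilon)}\lesssim \ln(1/\sqrt\varepsilon)$ (for small $\varepsilon$; large $\varepsilon$ is trivial) gives \eqref{probability of linear solution}. Equivalently, one can invoke Lemma \ref{lem:concentration}(ii) via the equivalence between moment growth $\sqrt r$ and subgaussian tails. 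The implicit constant is independent of $p$ as long as $r\ge p$ is permitted; if $\Lambda^2\lesssim p$, we simply use $r=p$ and absorb a $p$-dependent constant.

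\textbf{Step 2 ($p=\infty$).} Since $P_N f$ lives on eigenspaces with $\lambda_n\le N$, the Sobolev embedding $W^{2/p,p}\hookrightarrow L^\infty$ combined with the Bernstein-type bound $\|(1-\Delta_g)^{1/p}P_N f\|_{L^p}\lesssim N^{2/p}\|P_N f\|_{L^p}$ (via spectral multiplier / Littlewood–Paley theory on compact manifolds) gives
\[
\|P_N f\|_{L^\infty}\lesssim N^{2/p}\|P_N f\|_{L^p}.
\]
Alternatively, sum Sogge's estimate for $p\ge 6$ over the $\sim N$ eigenspaces: $\|\pi_n g\|_{L^\infty}\lesssim \lambda_n^{1/2}\|\pi_n g\|_{L^2}$. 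This route is weaker and requires a probabilistic $L^2$ bound per eigenspace, so I prefer the Bernstein route. Apply Step 1 with $p$ large, so that $N^{2/p}=N^{0+}$; this gives \eqref{probability of linear solution1} with the loss $N^{-\alpha+1+}$.

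The main (mild) obstacle is justifying the Bernstein inequality $L^p\to L^\infty$ on $\S^2$ for the projector $P_N$ uniformly in $N$. I would handle this by inserting a smooth spectral cutoff $\chi(\sqrt{-\Delta_g}/N)$, whose kernel satisfies $|K_N(x,y)|\lesssim N^2(1+N d(x,y))^{-M}$ by standard results. Young/Hölder then gives $\|\chi(\cdot)g\|_{L^\infty}\lesssim N^{2/p}\|g\|_{L^p}$. Since $P_N$ is a sharp cutoff, we apply this to $g=P_Nf$ with $\chi\equiv1$ on $[1/2,1]$. Another subtle point is making $\Lambda$ uniform in $p$; this is fine since only a fixed large $p$ (depending on the desired $0+$) is needed in Step 2.
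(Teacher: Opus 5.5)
Your proposal is correct and follows the paper's proof essentially step for step: Minkowski with moment order $r\ge p$ plus the uniform-in-$x$ Khinchin bound \eqref{frequency of linear solution}, Chebyshev optimized in the moment order to get a Gaussian tail, and Bernstein on the dyadic shell with a large fixed $p$ for the $L^\infty$ case (your kernel-based justification of Bernstein fills in a step the paper only cites). One small correction: $\varepsilon$ close to $1$ is not a trivial case but one that has to be excluded, because $\ln(1/\sqrt{\varepsilon})\to 0$ as $\varepsilon\to 1^-$ while $\|P_N e^{it(\Delta_g-1)}\phi_\alpha^\omega\|_{L^p_x}\ge \|P_N\phi_\alpha^\omega\|_{L^2_x}$ concentrates near a fixed multiple of $N^{1-\alpha}$; so the statement only makes sense for $\varepsilon$ bounded away from $1$, which the paper's proof also tacitly assumes.
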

	
	\begin{proof}
		Fix $ N \in 2^{\N} $, $ t \in \R $ and $ p \in [2, \infty) $.
		Then, for any $ q \in [p, \infty) $, by Minkowski's inequality and \eqref{frequency of linear solution}, one has
		\begin{align*}
			\bigg( \int_{\Omega} \big\Vert P_N \big( e^{it(\Delta_g-1)} \phi_\alpha^\omega \big) \big\Vert_{L_x^p(\S^2)}^q {\rm d}\P(\omega) \bigg)^{\frac1q} 
			\le & ~ \bigg\Vert \big\Vert P_N \big( e^{it(\Delta_g-1)} \phi_\alpha^\omega \big) \big\Vert_{L_{\omega}^q(\Omega)} \bigg\Vert_{L_x^p(\S^2)} \\
			\le & ~ C \sqrt{q} N^{-\alpha+1},
		\end{align*}
		which means that
		\begin{equation}\label{expectation}
			\E_{\omega} \bigg[ \big\Vert P_N \big( e^{it(\Delta_g-1)} \phi_\alpha^\omega \big) \big\Vert_{L_x^p(\S^2)}^q \bigg] \le C^q q^{\frac{q}2} N^{q(-\alpha+1)}.
		\end{equation}
		Furthermore, by Markov's inequality and \eqref{expectation}, we get for any $q \in [p, \infty)$
		\begin{align} 
			\P \bigg( \bigg\{ \omega \in \Omega ~ \bigg\vert ~ \big\Vert P_N \big( e^{it(\Delta_g-1)} \phi_\alpha^\omega \big) \big\Vert_{L_x^p(\S^2)} > \lambda \bigg\} \bigg)
			\le & ~ \lambda^{-q} \E_{\omega} \bigg[ \big\Vert P_N \big( e^{it(\Delta_g-1)} \phi_\alpha^\omega \big) \big\Vert_{L_x^p(\S^2)}^q \bigg] \nonumber \\ 
			\le & ~ C^q \lambda^{-q} q^{\frac{q}2} N^{q(-\alpha+1)}. \label{probability of tail}
		\end{align}
		
		{\bf We claim} that there exists $ \tilde{C} := \tilde{C}(N, \lambda, p) > 0 $ and a constant $ c > 0 $ such that
		\[ 
		\text{LHS of} ~ \eqref{probability of tail} \le \tilde{C} e^{-c N^{2(\alpha-1)} \lambda^2}, \qquad \forall ~ p \ge 2. 
		\]
		Indeed, constructing the auxiliary function $ f(q) := \lambda^{-q} q^{\frac{q}2} N^{q(-\alpha+1)} $ on $ [p, \infty) $, one can find that its minimum value is $ f(\frac{N^{2(\alpha-1)} \lambda^2}e) = e^{-\frac{N^{2(\alpha-1)} \lambda^2}{2e}} $ if $ \frac{N^{2(\alpha-1)} \lambda^2}e \in [p, \infty) $.
		Now, one may obtain that
		\[ 
		\text{RHS of} ~ \eqref{probability of tail} \lesssim e^{-\frac{N^{2(\alpha-1)} \lambda^2}{2e}}. 
		\]
		Choose $ \tilde{C} = C^{\frac{N^{2(\alpha-1)} \lambda^2}e} $ and $ c = \frac1{2e} $, then claim will hold for $ p \in \big[ 2, \frac{N^{2(\alpha-1)} \lambda^2}e \big] $ immediately.
		On the other hand, for $ p \in (\frac{N^{2(\alpha-1)} \lambda^2}e, \infty) $, the trivial bound 
		\[ 
		\text{LHS of} ~ \eqref{probability of tail} \le 1 = e^p e^{-p} \le \tilde{C} e^{-c N^{2(\alpha-1)} \lambda^2} 
		\] 
		means that claim will hold for $ \tilde{C} = e^p $ and $ c = \frac1e $.
		
		Finally, the claim above implies that \eqref{probability of linear solution} for $ \varepsilon = e^{-c N^{2(\alpha-1)} \lambda^2} $.
		Furthermore, \eqref{probability of linear solution1} can be obtained by \eqref{probability of linear solution} and Bernstein's inequality, which completes the proof.
	\end{proof}
	
	\begin{proposition}[Improved Strichartz estimate]\label{prop:improved Strichartz estimate}
		For any $ \varepsilon > 0 $, we have  
		\begin{equation}\label{improved Strichartz estimate}
			\P \bigg( \bigg\{ \omega \in \Omega ~ \bigg\vert ~  \big\Vert e^{it(\Delta_g-1)} P_N \phi_\alpha^\omega \big\Vert_{L_{t,x}^p(\S^2)} \lesssim N^{-\alpha+1} \ln \big( \frac1{\sqrt{\varepsilon}} \big) \bigg\} \bigg) \ge 1 - \varepsilon 
		\end{equation}
		holds for any $ 2 \le p < \infty $, $ N \in 2^{\N} $ and
		\begin{equation}\label{improved Strichartz estimate1}
			\P \bigg( \bigg\{ \omega \in \Omega ~ \bigg\vert ~  \big\Vert e^{it(\Delta_g-1)} P_N \phi_\alpha^\omega \big\Vert_{L_{t,x}^{\infty}(\S^2)} \lesssim N^{-\alpha+1 +} \ln \big( \frac1{\sqrt{\varepsilon}} \big) \bigg\} \bigg) \ge 1 - \varepsilon
		\end{equation}
		holds for any $ N \in 2^{\N} $.
	\end{proposition}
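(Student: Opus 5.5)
The plan is to run the same argument as in Corollary \ref{coro:probability of linear solution}, with the time variable added to the space variable in the Minkowski step. The time interval is taken to be a fixed compact interval $I$ (say $I=[0,2\pi]$ or $[-1,1]$), so that $L^p_{t,x}$ means $L^p(I\times\S^2)$.

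\textbf{Step 1: moment bound.} Fix $N\in 2^{\N}$ and $2\le p<\infty$. For $q\ge p$, Minkowski's inequality gives
\begin{equation*}
\Big(\E_\omega\big\|e^{it(\Delta_g-1)}P_N\phi_\alpha^\omega\big\|_{L^p_{t,x}(I\times\S^2)}^q\Big)^{1/q}\le \Big\|\big\|e^{it(\Delta_g-1)}P_N\phi_\alpha^\omega\big\|_{L^q_\omega}\Big\|_{L^p_{t,x}(I\times\S^2)}.
\end{equation*}
We then apply \eqref{frequency of linear solution}. It holds uniformly in $(t,x)\in\R\times\S^2$; this uniformity comes from Khinchin's inequality together with the exact local Weyl law \eqref{Local-Weyl}, and the unimodular factor $e^{-it\lambda_n^2}$ does not change the $\ell^2$ sum. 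Since $|I\times\S^2|<\infty$, this yields
\begin{equation*}
\E_\omega\big\|e^{it(\Delta_g-1)}P_N\phi_\alpha^\omega\big\|_{L^p_{t,x}}^q\le C^q q^{q/2}N^{q(1-\alpha)}.
\end{equation*}

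\textbf{Step 2: tail bound.} By Markov's inequality, the probability that the $L^p_{t,x}$ norm exceeds $\lambda$ is at most $C^q\lambda^{-q}q^{q/2}N^{q(1-\alpha)}$. We optimise in $q\ge p$ by taking $q\sim N^{2(\alpha-1)}\lambda^2$, exactly as in the claim inside the proof of Corollary \ref{coro:probability of linear solution}. This gives the Gaussian tail $\lesssim e^{-cN^{2(\alpha-1)}\lambda^2}$. In the regime where the optimal $q$ lies below $p$, we use the trivial bound $1$ and absorb it into the constant. Setting $\varepsilon=e^{-cN^{2(\alpha-1)}\lambda^2}$ gives $\lambda\sim N^{1-\alpha}\sqrt{\ln(1/\varepsilon)}$, which is bounded by the stated $N^{1-\alpha}\ln(1/\sqrt{\varepsilon})$ for small $\varepsilon$. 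This proves \eqref{improved Strichartz estimate}.

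\textbf{Step 3: the $L^\infty_{t,x}$ bound.} We use Sobolev/Bernstein in both variables. The function $e^{it(\Delta_g-1)}P_N\phi_\alpha^\omega$ is spectrally localised at $\lambda_n\sim N$, so $\partial_t$ costs a factor $N^2$ and Bernstein on $\S^2$ costs $N^{2/p}$. Hence
\begin{equation*}
\|F\|_{L^\infty_{t,x}}\lesssim N^{C/p}\|F\|_{L^p_{t,x}}
\end{equation*}
for $F=\chi(t)e^{it(\Delta_g-1)}P_N\phi_\alpha^\omega$. To justify this, apply the one-dimensional Sobolev embedding $W^{1/p+,p}_t\subset L^\infty_t$ with a time cutoff $\chi$, and put the $t$-derivatives on the solution as factors $\lambda_n^2$ via the $L^p$ bound applied to $(\Delta_g-1)^{\theta}P_N\phi$, which obeys the same moment estimate up to $N^{2\theta}$. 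Taking $p$ large then turns $N^{C/p}$ into $N^{0+}$, giving \eqref{improved Strichartz estimate1}.

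The main obstacle is Step 3. Bernstein in $x$ alone, as used in Corollary \ref{coro:probability of linear solution}, does not control the supremum in $t$. So one must check that the fractional time regularity loses only $N^{2/p}$. The projected function has time frequencies in $[N^2/4,N^2]$, and we would handle this by Bernstein in $t$ for band-limited functions after a smooth time cutoff; the cutoff only mildly spreads the time spectrum. Alternatively, a chaining/union bound over an $N^{-10}$-net in $(t,x)$ combined with the pointwise Gaussian tail also gives the $N^{0+}$ loss.
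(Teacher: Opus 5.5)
Your proposal is correct and follows the paper's route, which simply refers back to Corollary \ref{coro:probability of linear solution}: Minkowski with the uniform Khinchin bound \eqref{frequency of linear solution}, then Markov with $q$ optimized, then Bernstein. You also rightly note that for \eqref{improved Strichartz estimate1} Bernstein must be applied in $t$ as well as in $x$. That time step is simplest via periodicity: since $\lambda_n^2=n^2+n+1\in\Z$, the map $t\mapsto e^{it(\Delta_g-1)}P_N\phi_\alpha^\omega(x)$ is a $2\pi$-periodic trigonometric polynomial of degree at most $N^2$, so Nikolskii's inequality on $[0,2\pi]$ gives the $N^{2/p}$ loss with no cutoff, and it also yields the $L^\infty_{t,x}(\R\times\S^2)$ bound used later in the proof of Proposition \ref{prop:convergence of linear solution}.
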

	
	\begin{proof}
		The proof is similar to that of Corollary \ref{coro:probability of linear solution}.
	\end{proof}

	\begin{proof}[Proof of Proposition \ref{prop:convergence of linear solution}]
		Choose $\eta>0$ such that $\eta<\alpha-1$. For each dyadic $N\ge 2$, define the event
		\[
		E_N:=\left\{\omega\in\Omega:\,
		\|P_N\phi_\alpha^\omega\|_{L_x^\infty(\S^2)}
		+\|e^{it(\Delta_g-1)}P_N\phi_\alpha^\omega\|_{L_{t,x}^\infty(\R\times\S^2)}
		\le C N^{-\alpha+1+\eta}\log N\right\}.
		\]
		By \eqref{probability of linear solution1} and \eqref{improved Strichartz estimate1}, applied with $\varepsilon=N^{-4}$, we may choose $C>0$ so that
		\[
		\P(E_N^c)\lesssim N^{-4}.
		\]
		Since $\sum_{N\in 2^{\mathbb{N}}}\P(E_N^c)<\infty$, the Borel-Cantelli lemma yields an event $\Omega_{\lin}\subset\Omega$ of full probability such that, for every $\omega\in\Omega_{\lin}$, there exists $N_0(\omega)$ with $\omega\in E_N$ for all dyadic $N\ge N_0(\omega)$.
		
		Fix $\omega\in\Omega_{\lin}$. Since $\eta<\alpha-1$, the dyadic series
		\[
		\sum_{N\in 2^{\mathbb{N}}} N^{-\alpha+1+\eta}\log N
		\]
		converges. Hence, for every dyadic $M\ge N_0(\omega)$,
		\begin{align*}
			\sup_{t\in\R}\left\|\sum_{N>M}e^{it(\Delta_g-1)}P_N\phi_\alpha^\omega\right\|_{L_x^\infty}
			&\le \sum_{N>M}\|e^{it(\Delta_g-1)}P_N\phi_\alpha^\omega\|_{L_{t,x}^\infty}\\
			&\lesssim \sum_{N>M}N^{-\alpha+1+\eta}\log N,
		\end{align*}
		and similarly,
		\[
		\left\|\sum_{N>M}P_N\phi_\alpha^\omega\right\|_{L_x^\infty}
		\lesssim \sum_{N>M}N^{-\alpha+1+\eta}\log N.
		\]
		Therefore, both tails converge to $0$ uniformly as $M\to\infty$.
		
		For a fixed dyadic $M$, the low-frequency part $P_{\le M}\phi_\alpha^\omega$ is smooth. Hence
		\[
		\lim_{t\to0}\|e^{it(\Delta_g-1)}P_{\le M}\phi_\alpha^\omega-P_{\le M}\phi_\alpha^\omega\|_{L_x^\infty(\S^2)}=0.
		\]
		Combining the dyadic decomposition
		\[
		e^{it(\Delta_g-1)}\phi_\alpha^\omega-\phi_\alpha^\omega
		=\sum_{N>M}e^{it(\Delta_g-1)}P_N\phi_\alpha^\omega
		+\big(e^{it(\Delta_g-1)}P_{\le M}\phi_\alpha^\omega-P_{\le M}\phi_\alpha^\omega\big)
		-\sum_{N>M}P_N\phi_\alpha^\omega,
		\]
		we conclude that
		\[
		\lim_{t\to0}\|e^{it(\Delta_g-1)}\phi_\alpha^\omega-\phi_\alpha^\omega\|_{L_x^\infty(\S^2)}=0
		\]
		for every $\omega\in\Omega_{\lin}$. Since $\P(\Omega_{\lin})=1$, this proves the proposition.
	\end{proof}
	In summary, the proof of Theorem \ref{thm:2} is now complete.

	\section{Resolution ansatz for the cubic NLS}\label{sec:Ansatz}

	In this section, we recall from \cite{Burq2024} only the ingredients of the random averaging operator (RAO) ansatz that will be used in Section \ref{sec:nonlinear-pointwise}. In particular, we keep the dyadic decomposition of the truncated flow, the time-localized objects $\psi_N^\dagger$ and $w_N^\dagger$, the conditional law-equivalence lemma, and the large-probability bounds coming from the $\operatorname{Loc}(N)$ scheme.
	
	\subsection{Dyadic decomposition of the truncated flow}
	
	For each dyadic number $N\in 2^{\mathbb{N}}$, let $u_N$ be the smooth solution to the truncated Wick-ordered cubic NLS
	\begin{equation}\label{fml-WickOrd-NLS}
		\begin{cases}
			i\partial_t u_N + (\Delta_g - 1)u_N = \mathcal{N}(u_N),\\
			u_N(0) = P_{\le N}\phi_\alpha^\omega.
		\end{cases}
	\end{equation}
	Following \cite[Section 1.4]{Burq2024}, one introduces the dyadic increments
	\[
	v_N:=u_N-u_{N/2},\qquad N\ge 2,
	\]
	and decomposes
	\[
	v_N=\psi_N+w_N.
	\]
	Consequently,
	\[
	u_N=\sum_{L\le N}(\psi_L+w_L).
	\]
	The singular part $\psi_N$ is encoded by the random averaging operators $\mathcal{H}^N_n(t):E_n\to E_n$, defined for $N/2<n\le N$, through the identity
	\begin{equation}\label{eq:psi-shell-raw}
		\pi_n\psi_N(t)=\lambda_n^{-\left(\alpha-\frac12\right)}\mathcal{H}^N_n(t)(e_n^\omega).
	\end{equation}
	We shall not use the explicit Duhamel formula for $\mathcal{H}^N_n(t)$.
	
	The main probabilistic well-posedness result from \cite{Burq2024} is the following local convergence theorem.
	
	\begin{theorem}[{\cite[Theorem 1.3]{Burq2024}}]\label{thm:BCST}
		Let $\alpha>1$. There exist constants $C_1>c_1>0$ and $\delta_0>0$ such that, for every $R\ge 1$, there is a measurable set $\Sigma_R\subset H^{\alpha-1-}(\S^2)$ satisfying
		\[
		\mu_\alpha(\Sigma_R^c)<C_1e^{-c_1R^{\delta_0}},
		\]
		with the following property. If $\phi\in\Sigma_R$, then the sequence $(u_N)_N$ converges in
		\[
		C([ -\tau_R,\tau_R ];H^{\alpha-1-}(\S^2)),\qquad \tau_R:=R^{-C_1},
		\]
		to a solution $u$ of \eqref{Wick ordered}. Moreover,
		\[
		u=\psi_{\le \infty}+w_{\le \infty},
		\]
		where
		\[
		\psi_{\le \infty}\in C([ -\tau_R,\tau_R ];\mathcal{C}^{(\alpha-1)-}(\S^2))
		\quad\text{and}\quad
		w_{\le \infty}\in C([ -\tau_R,\tau_R ];H^{s_0}(\S^2))
		\]
		for some $s_0>\frac12$. Here
		\[
		\mathcal{C}^{(\alpha-1)-}(\S^2):=\bigcap_{\beta<\alpha-1} C^{\beta}(\S^2).
		\]
	\end{theorem}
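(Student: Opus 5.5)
The plan is to follow the random averaging operator scheme of \cite{Burq2024}, organized as an induction on dyadic frequencies $N$. For each $N$, I will formulate a condition $\operatorname{Loc}(N)$ that encodes bounds on the objects built from $u_{N}$, with parameters as in \eqref{fml-pre-para}. Concretely, $\operatorname{Loc}(N)$ should require, on the time interval $[-\tau_R,\tau_R]$ (after multiplying by a time cutoff, which produces the daggered objects $\psi_N^\dagger,w_N^\dagger$):
\begin{itemize}
\item[(a)] for $N/2<n\le N$, the operators $\mathcal{H}^N_n$ in \eqref{eq:psi-shell-raw} satisfy $\|\mathcal{H}^N_n\|_{S_n^{q,\gamma}}\lesssim 1$ and $\|\mathcal{H}^N_n-e^{-it\lambda_n^2}\|_{S_n^{q,\gamma_1}}\lesssim N^{-\delta}$;
\item[(b)] $\|w_N\|_{X^{s,b}}\lesssim N^{-\delta}$, with $s=s(\sigma)>\frac12$.
\end{itemize}
The crucial structural fact I will establish is that $\mathcal{H}^N_n$ is measurable with respect to the $\sigma$-algebra generated by $\{g_{m,k}\}_{m\le N/2}$. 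Hence, conditionally on that $\sigma$-algebra, $\mathcal{H}^N_n(e_n^\omega)$ is again a Gaussian vector on $E_n$, and its covariance is controlled by the operator norm of $\mathcal{H}^N_n$. This conditional law-equivalence is what gives $\psi_N$ the same probabilistic $L^\infty$ bounds as the linear flow, as in Proposition \ref{prop:large deviation estimate} and Corollary \ref{coro:probability of linear solution}, up to $N^{0+}$ losses.

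The inductive step proceeds as follows. First, write the equation for $v_N=u_N-u_{N/2}$. Then isolate the high$\times$low$\times$low interactions in which the single high input lies in the shell $N/2<n\le N$ and the two low inputs come from $u_{N/2}$. This part is linear in the high input and defines $\psi_N$ through the linear equation
\begin{align*}
i\partial_t\psi_N+(\Delta_g-1)\psi_N=\Pi_{N}\big(2|u_{N/2}|^2\psi_N+u_{N/2}^2\overline{\psi_N}\big),
\end{align*}
with the Wick correction included and with a frequency cutoff $\Pi_N$ that restricts the low inputs to frequencies $\le N^{1-\epsilon}$ or so. Solving this equation shell by shell gives $\mathcal{H}^N_n$, and I will bound its $S_n^{q,\gamma}$ norm by a fixed point in the Fourier–Lebesgue space of Lemma \ref{lem-Pre-embed}. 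The smallness of the nonlinear contribution will come from the short time $\tau_R$ together with $\operatorname{Loc}(N/2)$ applied to $u_{N/2}$.

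All remaining interactions go into $w_N$: high$\times$high inputs, and resonant terms such as $\mathcal{N}^2,\mathcal{N}^3$ after Wick renormalization. I will estimate these in $X^{s,b}$ using the following ingredients:
\begin{itemize}
\item trilinear/multilinear eigenfunction estimates on $\S^2$, namely Sogge's bounds and bilinear estimates of Burq--G\'erard--Tzvetkov type;
\item Wiener chaos bounds (Proposition \ref{prop:WC}), which give square-root cancellation for the random multilinear sums, applied conditionally via the independence structure above;
\item for the terms $\mathcal{N}^2,\mathcal{N}^3$, the exact local Weyl law \eqref{Local-Weyl}, which makes $|\pi_n\cdot|^2-\|\pi_n\cdot\|_{L^2}^2$ have mean zero pointwise.
\end{itemize}
The exceptional sets where these probabilistic bounds fail at level $N$ have measure at most $e^{-cR^{\delta_0}N^{\delta}}$, and these are summable in $N$. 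Their union defines $\Sigma_R^c$, which gives the bound $\mu_\alpha(\Sigma_R^c)<C_1e^{-c_1R^{\delta_0}}$.

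Once $\operatorname{Loc}(N)$ holds for all $N$, the increments decay geometrically in the stated norms, so $u_N$ is Cauchy. The sum $\sum_L\psi_L$ converges in $C_t\mathcal{C}^{(\alpha-1)-}$ by the conditional Gaussian $L^\infty$ bounds combined with the H\"older-in-time embedding of Lemma \ref{lem-Pre-embed}. The sum $\sum_L w_L$ converges in $X^{s,b}\hookrightarrow C_tH^{s_0}$ with $s_0>\frac12$. Passing to the limit in the Duhamel formula, using these convergences, shows that $u$ solves \eqref{Wick ordered}.

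The main obstacle is closing the $w_N$ estimate for the high-low-low terms that are not absorbed into $\psi_N$. These include interactions in which the low inputs have frequency between $N^{1-\epsilon}$ and $N$, and the resonant $\mathcal{N}^2$-type pairings that the theorem of \cite{counterexample} shows cannot be smoothed by the plain re-centering ansatz \eqref{re-centering}. Here I would first use the operator-norm (Hilbert--Schmidt versus operator) gain for random matrices acting on the independent vectors $e_n^\omega$, and then exploit the $N^{-\delta}$ room created by the gap between $\gamma_1$ and $\gamma$.
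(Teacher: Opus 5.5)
The paper does not prove this statement; it is quoted from \cite{Burq2024}. What this paper uses from that construction is that $\mathcal{H}^N_n$ maps $E_n$ to $E_n$, is $\mathcal{B}_{\le N/2}$-measurable and is unitary for $|t|\le T$, together with the bounds \eqref{eq:psin-X0gamma}--\eqref{eq:wn-tail-X0b}. Measured against that, your outline fails already at its inductive hypothesis.

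Condition (a) requires $\|\mathcal{H}^N_n-e^{-it\lambda_n^2}\|_{S_n^{q,\gamma_1}}\lesssim N^{-\delta}$, and this is false. To first order in $t$, the $E_n$-block of the operator differs from $e^{-it\lambda_n^2}$ by $-i\,e^{-it\lambda_n^2}\int_0^t\pi_nV(t')\pi_n\,dt'$, where $V=2(|u_{N/2}|^2-\|u_{N/2}\|_{L^2}^2)$. The Toeplitz operator $\pi_nV\pi_n$ does not decay in $n$. On the highest-weight harmonics $(x_1+ix_2)^n\in E_n$, which concentrate on the equator, its diagonal entry tends to the equatorial average of $V$. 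That average is generically nonzero because of the degree-two part produced by the lowest modes.

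Since $\gamma_1>1/q'$, the $S_n^{q,\gamma_1}$ norm controls the fixed-time operator norm. So the left side of (a) is bounded below by a constant times $|t|$, uniformly in $N$. Smallness can come only from the time scale (or from increments in a low-frequency truncation parameter), never from $N$.

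This is not cosmetic. Under (a) and (b), the nonlinear part $u_N-e^{it(\Delta_g-1)}P_{\le N}\phi$ would stay bounded in $H^{\alpha-1}$ uniformly in $N$, with each dyadic shell contributing $O(L^{-\delta})$. But the theorem of \cite{counterexample} quoted in the introduction shows that already the second Picard iterate grows like $|t|(\ln N)^{1/2}$ in $L^2_\omega H^{\alpha-1}$, a growth the paper attributes to exactly these resonant high--low--low interactions. Compare \eqref{eq:psin-X0gamma}, which only places $\pi_n\psi_N^\dagger$ at the size of the free evolution of the data, with no $N^{-\delta}$ proximity to it.

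Second, your equation for $\psi_N$ does not produce the objects your argument relies on. Since $\lambda_m^2-\lambda_n^2=(m-n)(m+n+1)$, the only high--low--low interactions without a modulation gain are those whose output lies in the same eigenspace $E_n$ as the high input. Projecting these back onto $E_n$ gives the self-adjoint generator $\pi_nV\pi_n$. That yields complex-linear, block-diagonal, unitary operators $\mathcal{H}^N_n:E_n\to E_n$, and unitarity is what gives:
\begin{itemize}
\item the exact law equivalence of Lemma \ref{lem-invariance};
\item the identity $\E\big[|\mathcal{H}^N_ne_n^\omega(x)|^2\,\big|\,\mathcal{B}_{\le N/2}\big]=1$ via \eqref{Local-Weyl}, so the pointwise Wick cancellation you invoke for $\mathcal{N}^2,\mathcal{N}^3$ survives for the colored terms.
\end{itemize}

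Your equation instead keeps two kinds of terms that should not be there:
\begin{itemize}
\item the antilinear term $u_{N/2}^2\overline{\psi_N}$, which has modulation of size $\sim N^2$ and therefore belongs in $w_N$;
\item the couplings $E_n\to E_m$ with $m\neq n$.
\end{itemize}
Both destroy complex-linearity and unitarity. They are also incompatible with the representation \eqref{eq:psi-shell-raw} that your condition (a) presupposes.

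At the same time, cutting the low inputs at $N^{1-\epsilon}$ sends the exactly resonant interactions with low frequency in $(N^{1-\epsilon},N/2]$ into $w_N$. There they carry zero modulation and hence no half-derivative gain. Keeping them in $w_N$ would require a decay bound of the form $\|\pi_nV_L\pi_n\|_{E_n\to E_n}\lesssim N^{-(s-\alpha+1)}$ for these random potentials. That is precisely the step you label ``the main obstacle'' and support only heuristically, so the scheme is not closed at either of its two load-bearing points.
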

	
	\subsection{Time-localized ansatz and the large-probability event}\label{subsec:time-localized-ansatz}
	
	Let $\chi\in C_c^\infty(\mathbb{R})$ satisfy $\chi\equiv 1$ on $[-\frac12,\frac12]$ and $\operatorname{supp}\chi\subset (-1,1)$. For $0<T<\frac12$, write $\chi_T(t):=\chi(T^{-1}t)$. The time-localized version of the RAO ansatz constructed in \cite[Section 5]{Burq2024} provides, for every dyadic $N$, compactly supported functions $u_N^\dagger$, $\psi_N^\dagger$, $w_N^\dagger$ and operators $\mathcal{H}^{N,\dagger}_n(t):E_n\to E_n$ such that
	\begin{equation}\label{eq:dagger-equals-original}
		u_N^\dagger(t)=u_N(t),\qquad \psi_N^\dagger(t)=\psi_N(t),\qquad w_N^\dagger(t)=w_N(t)
	\end{equation}
	for all $|t|\le T/2$. In addition, for $N/2<n\le N$,
	\begin{equation}\label{eq:psi-dagger-shell}
		\pi_n\psi_N^\dagger(t)=\chi_T(t)\lambda_n^{-\left(\alpha-\frac12\right)}\mathcal{H}^{N,\dagger}_n(t)(e_n^\omega).
	\end{equation}
	Since each truncated solution $u_N$ is smooth, all these objects are defined pathwise for every $\omega\in\Omega$.
	
	We denote by $\mathcal{B}_{\le N}$ the sigma-algebra generated by the Gaussian variables
	\[
	\big\{g_{m,k}^\omega: 0\le m\le N,\ |k|\le m\big\}.
	\]
	For $|t|\le T$, the operator $\mathcal{H}^{N,\dagger}_n(t)$ is unitary on $E_n$ and $\mathcal{B}_{\le N/2}$-measurable; see \cite[Lemma 7.1]{Burq2024}. The following conditional law-equivalence lemma is the key probabilistic tool in Section \ref{sec:nonlinear-pointwise}.
	
	\begin{lemma}[Conditional law-equivalence, {\cite[Lemma 7.1]{Burq2024}}]\label{lem-invariance}
		Let $\ell\in\mathbb{N}^*$ and let $n_1,\dots,n_{\ell}\in (N/2,N]$ be integers, not necessarily distinct. Let $F$ be a bounded Borel measurable functional on $\mathbb{C}\times E_{n_1}\times\cdots\times E_{n_{\ell}}$, and let $Y$ be a $\mathcal{B}_{\le N/2}$-measurable random variable. Then, for every $|t|\le T$,
		\[
		\E\Big[F\big(Y,\mathcal{H}^{N,\dagger}_{n_1}(t)(e_{n_1}^\omega),\dots,\mathcal{H}^{N,\dagger}_{n_{\ell}}(t)(e_{n_{\ell}}^\omega)\big)\,\big|\,\mathcal{B}_{\le N/2}\Big]
		=
		\E\Big[F\big(Y,e_{n_1}^\omega,\dots,e_{n_{\ell}}^\omega\big)\,\big|\,\mathcal{B}_{\le N/2}\Big]
		\]
		almost surely in $\omega$.
	\end{lemma}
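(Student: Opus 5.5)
The plan is to reduce the statement to the rotation invariance of complex Gaussian vectors, using a \emph{freezing} (substitution) argument for conditional expectations.

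\textbf{Setup.} Fix $|t|\le T$. Let $m_1<\dots<m_r$ be the distinct values among $n_1,\dots,n_\ell$. Then the tuple $\big(\mathcal{H}^{N,\dagger}_{n_j}(t)(e_{n_j}^\omega)\big)_{j}$ is a fixed deterministic function of the tuple $\big(\mathcal{H}^{N,\dagger}_{m_i}(t)(e_{m_i}^\omega)\big)_{i\le r}$: repeated indices produce the same vector with the same operator. Absorbing this duplication map into $F$, I may assume the $n_j$ are distinct.

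\textbf{Independence.} Write $G_n:=(g_{n,k}^\omega)_{|k|\le n}\in\C^{2n+1}$. Since the $\mathbf{b}_{n,k}$ are real and orthonormal, the coordinate map $\iota_n:\C^{2n+1}\to E_n$, $c\mapsto (2n+1)^{-1/2}\sum_k c_k\mathbf{b}_{n,k}$, is $(2n+1)^{-1/2}$ times a unitary isomorphism, and $e_n^\omega=\iota_n(G_n)$. Because every $n_j>N/2$, the vectors $G_{n_1},\dots,G_{n_\ell}$ are built from Gaussians outside the generating family of $\mathcal{B}_{\le N/2}$. They are therefore independent of $\mathcal{B}_{\le N/2}$ and mutually independent.

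\textbf{Measurability.} By \cite[Lemma 7.1]{Burq2024} as recalled above, $\mathcal{H}^{N,\dagger}_{n}(t)$ is unitary on $E_n$ and $\mathcal{B}_{\le N/2}$-measurable. Concretely, it is built from the smooth truncated flows $u_{L}$ with $L\le N/2$, which depend continuously on the finitely many variables generating $\mathcal{B}_{\le N/2}$. Hence its matrix entries $H_{n;\ell,k}(t)$ are Borel functions of those variables, and so is $Y$.

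\textbf{Freezing and Gaussian invariance.} By the freezing lemma (independence plus joint measurability, then Fubini), the conditional expectation equals $\Phi(\omega)$, where
\[
\Phi(\omega):=\E_{\omega'}\Big[F\big(Y(\omega),U_1(\omega)e_{n_1}^{\omega'},\dots,U_\ell(\omega)e_{n_\ell}^{\omega'}\big)\Big],\qquad U_j:=\mathcal{H}^{N,\dagger}_{n_j}(t).
\]
For each frozen $\omega$, the matrix of $U_j$ in the basis $\mathbf{b}_{n_j,k}$ is a unitary $V_j\in U(2n_j+1)$, and $U_je_{n_j}^{\omega'}=\iota_{n_j}(V_jG_{n_j}^{\omega'})$. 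A standard complex Gaussian vector in $\C^{d}$ has density proportional to $e^{-|z|^2}$, which is invariant under $U(d)$. So $V_jG_{n_j}$ has the same law as $G_{n_j}$, and by independence across $j$ the joint law of $(V_jG_{n_j})_j$ equals that of $(G_{n_j})_j$. Therefore $\Phi(\omega)=\E_{\omega'}\big[F(Y(\omega),e_{n_1}^{\omega'},\dots,e_{n_\ell}^{\omega'})\big]$. Applying the freezing lemma again, in reverse, identifies this with the right-hand side almost surely.

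\textbf{Main obstacle.} The only delicate point is measurability: I need $(\omega,\omega')\mapsto F\big(Y(\omega),U(\omega)e^{\omega'}\big)$ to be jointly Borel, so that Fubini and the freezing lemma apply. I would settle this by checking, as in \cite{Burq2024}, that $\mathcal{H}^{N,\dagger}_n(t)$ is a Borel (indeed continuous) function of $(g_{m,k})_{m\le N/2}$. This holds because the time-localized construction involves only ODE flows on the finite-dimensional range of $P_{\le N/2}$ and smooth cutoffs $\chi_T$. Boundedness of $F$ then removes all integrability issues.
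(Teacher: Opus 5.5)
Your proposal is correct and follows essentially the argument behind \cite[Lemma 7.1]{Burq2024}: conditioning on $\mathcal{B}_{\le N/2}$ freezes each $\mathcal{H}^{N,\dagger}_{n_j}(t)$ into a deterministic unitary, while the Gaussian blocks $(g_{n_j,k})_{|k|\le n_j}$ are independent of $\mathcal{B}_{\le N/2}$ and have $U(2n_j+1)$-invariant law. The paper gives no proof of its own and only cites that lemma, and the measurability point you flag as the main obstacle is not one: the $\mathcal{B}_{\le N/2}$-measurability of the matrix entries (which the paper takes as given) already makes them Borel functions of the finitely many generating Gaussians by Doob--Dynkin, so you need no continuity of the truncated flows.
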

	
	The other ingredient needed in Section \ref{sec:nonlinear-pointwise} is the following large-probability statement of the $\operatorname{Loc}(N)$ property. We only record the bounds that will actually be used later.
	
	\begin{proposition}[{\cite[Definition 5.2, Corollary 5.7 and Lemma 7.2]{Burq2024}}]\label{prop:loc-bounds}
		Assume that $\alpha>1$. There exist parameters
		\[
		(q,\gamma,\gamma_1,\delta,s,\theta,b)=(q(\sigma),\gamma(\sigma),\gamma_1(\sigma),\delta(\sigma),s(\sigma),\theta(\sigma),b(\sigma))
		\]
		as in \eqref{fml-pre-para}, with $s>\frac12$ and $b>\frac12$, and constants $R_0\ge 1$, $C_2>c_2>0$, $\delta_1>0$ such that the following holds. For every $R\ge R_0$, set
		\[
		T:=R^{-10/\theta}.
		\]
		Then there exists a good data set $\Sigma_R^{\mathrm{loc}}\subset H^{(\alpha-1)-}(\mathbb{S}^2)$ satisfying
		\[
		\mu_{\alpha}\big((\Sigma_R^{\mathrm{loc}})^c\big)\le C_2e^{-c_2R^{\delta_1}},
		\]
		with the property that, for every initial data $\phi\in\Sigma_R^{\mathrm{loc}}$, every dyadic $N$, and every integer $n$ with $N/2<n\le N$, the time-localized colored term satisfies
		\begin{equation}\label{eq:psin-X0gamma}
			\|\chi_T(t)\pi_n\psi_N^{\dag}\|_{X_{q,\infty}^{0,\gamma}(E_n)}\lesssim RT^{-\gamma+\frac{1}{q'}}N^{-(\alpha-\frac{1}{2})+\frac{2}{q}+\delta},
		\end{equation}
		and the time-localized remainder $w_N^\dagger$ satisfies
		\begin{equation}\label{eq:wn-X0b}
			\|w_N^\dagger\|_{X^{0,b}}\le R^{-1}N^{-s},
		\end{equation}
		and, for every dyadic $L\ge 2N$,
		\begin{equation}\label{eq:wn-tail-X0b}
			\|(\mathrm{Id}-P_{\le L})w_N^\dagger\|_{X^{0,b}}
			\le \left(\frac{N}{L}\right)^{10}R^{-1}N^{-s}.
		\end{equation}
	\end{proposition}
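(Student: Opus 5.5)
The statement is a repackaging of the inductive $\operatorname{Loc}(N)$ scheme of \cite{Burq2024}, so the plan is to recall that scheme rather than rebuild it. Fix $\sigma$ small depending on $\alpha$ and choose the parameters by \eqref{fml-pre-para}. The hierarchy $\delta\ll\gamma_1-\frac1{q'}\ll\gamma-\frac1{q'}\ll\theta\ll\frac1q\ll s-\frac12$ is what makes the estimates close. Set $T=R^{-10/\theta}$. Following \cite[Definition 5.2]{Burq2024}, say that $\operatorname{Loc}(N)$ holds for $\phi$ if, for every dyadic $M\le N$ and every $M/2<n\le M$, three things are true. First, the operator $\mathcal H^{M,\dagger}_n$ obeys $S^{q,\gamma}_n$ and $S^{q,\gamma_1,*}_n$-type bounds of size $RT^{-\gamma+\frac1{q'}}M^{\delta}$ (and a similar bound for $\gamma_1$). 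Second, $w_M^\dagger$ satisfies \eqref{eq:wn-X0b} and \eqref{eq:wn-tail-X0b}. Third, the random shells satisfy the Gaussian bounds $\|e^\omega_n\|_{L^\infty_x}\lesssim R\,n^{\delta}$ and their Strichartz analogues. The proposition then amounts to two claims: $\operatorname{Loc}(N)$ implies \eqref{eq:psin-X0gamma}, and $\operatorname{Loc}(N)$ holds for all $N$ outside a set of measure $C_2e^{-c_2R^{\delta_1}}$.

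For the first claim, I would write \eqref{eq:psi-dagger-shell} on the Fourier side. Since $\mathcal H^{N,\dagger}_n$ acts on $E_n$, taking the $L^\infty_x$ norm of $\widehat{\chi_T\mathcal H^{N,\dagger}_n}(\tau)(e^\omega_n)$ reduces, after conditioning on $\mathcal B_{\le N/2}$, to bounding a Gaussian vector in $E_n$ pushed forward by a $\mathcal B_{\le N/2}$-measurable matrix. Lemma \ref{lem-invariance}, together with Lemma \ref{Lp eigen} and Lemma \ref{lem:concentration}, gives $L^\infty_x$ bounds of size $R\,N^{\delta}$ times the $S^{q,\gamma}_n$ operator norm. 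A union bound over $x$ in an $N^{-10}$-net of $\S^2$ and over $\tau$ in a lattice loses only $N^{2/q}$ after Bernstein. Multiplying by $\lambda_n^{-(\alpha-\frac12)}$ then yields \eqref{eq:psin-X0gamma}.

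For the second claim, I would run the induction of \cite[Corollary 5.7]{Burq2024}: show $\mu_\alpha(\operatorname{Loc}(N/2)\cap\operatorname{Loc}(N)^c)\le C e^{-cR^{\delta_1}N^{\delta_1}}$. Conditionally on $\mathcal B_{\le N/2}$, the RAO $\mathcal H^{N,\dagger}_n$ solves a linear equation whose coefficients are built from $u^\dagger_{N/2}$. The operator bounds then follow from a contraction in $S^{q,\gamma}_n$, using the bilinear and trilinear estimates of \cite{Burq2024} with the smallness $T^{\theta}$ absorbing powers of $R$. The remainder $w_N^\dagger$ is obtained by a contraction in $X^{0,b}$, with the multilinear random estimates controlled by Wiener chaos (Proposition \ref{prop:WC} and Corollary \ref{coro:large devietion estimate}) at probability cost $e^{-cR^{\delta_1}N^{\delta_1}}$. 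Summing over dyadic $N$ gives the measure bound, and Lemma 7.2 of \cite{Burq2024} supplies unitarity and measurability for $|t|\le T$.

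The main obstacle is the remainder bound \eqref{eq:wn-X0b} with gain $N^{-s}$, $s>\frac12$. This requires the delicate high-low-low and $\mathcal N^2$ resonant interactions to be absorbed entirely into $\mathcal H^N_n$, leaving only genuinely smoothing terms. Here I would rely on the counting and multilinear eigenfunction estimates of \cite{Burq2024} rather than reprove them. The tail bound \eqref{eq:wn-tail-X0b} then follows because high outputs from frequencies $\le N$ carry only rapidly decaying modulation contributions, by the frequency localization of the truncated nonlinearity.
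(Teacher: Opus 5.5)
Your proposal follows the same route as the paper, which does not reprove this statement but imports it from the $\operatorname{Loc}(N)$ induction of \cite{Burq2024}: Definition 5.2 and Corollary 5.7 supply the good set and the bounds \eqref{eq:wn-X0b}--\eqref{eq:wn-tail-X0b}, and the Fourier--Lebesgue bounds of \cite[Lemma 7.2]{Burq2024} supply \eqref{eq:psin-X0gamma}. One caution on your reconstruction of \eqref{eq:psin-X0gamma}: Lemma \ref{lem-invariance} does not apply there, since it concerns the unitary $\mathcal{H}^{N,\dagger}_n(t)$ at a fixed time and not its time-Fourier transform; what you actually use is the $\mathcal{B}_{\le N/2}$-measurability of $\chi_T\mathcal{H}^{N,\dagger}_n$ together with the independence of $e_n^\omega$, which gives at each $(\tau,x)$ the conditional variance $\|\widehat{\chi_T\mathcal{H}^{N,\dagger}_n}(\tau)^*k_x\|_{L^2}^2$ (with $k_x$ the normalized reproducing kernel of $E_n$ at $x$), so the deterministic input needed is an $S^{q,\gamma,*}_n$ bound on the adjoint rather than the $S^{q,\gamma}_n$ norm of $\mathcal{H}^{N,\dagger}_n$.
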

	
	\begin{remark}
		The estimate \eqref{eq:psin-X0gamma} is not written in this form in Definition 5.2 and Corollary 5.7 of \cite{Burq2024}; it is obtained from the Fourier-Lebesgue bounds in \cite[Lemma 7.2]{Burq2024}. This is the only additional piece of the RAO analysis that we shall need in Section \ref{sec:nonlinear-pointwise}.
	\end{remark}

	\section{Almost sure pointwise convergence of the NLS flow}\label{sec:nonlinear-pointwise}

	In this section, we prove Theorem \ref{thm:2}. We work with the $\mu_\alpha$-almost sure solution provided by Theorem \ref{thm:BCST}. The singular terms $\psi_N$ will be controlled in $L^\infty_{t,x}$ by combining the time-regularity estimate \eqref{eq:psin-X0gamma} with the law-equivalence lemma, while the remainders $w_N$ will be treated by Lemma \ref{maximal-easy} together with \eqref{eq:wn-X0b}--\eqref{eq:wn-tail-X0b}.
	
	Fix $R\ge R_0$, let $T=R^{-10/\theta}$ be as in Proposition \ref{prop:loc-bounds}, and define
	\begin{equation}\label{eq:TR}
		T_R:=\min\Big\{\tau_R,\frac{T}{2}\Big\}
		=\min\Big\{R^{-C_1},\frac12R^{-10/\theta}\Big\}.
	\end{equation}
	On the interval $[-T_R,T_R]$, the sequence $u_N(t)$ converges to the limit $u(t)$ from Theorem \ref{thm:BCST}, and the identities in \eqref{eq:dagger-equals-original} are valid for every dyadic $N$.
	
	\begin{proposition}\label{prop:psi-summable}
		Fix $R\ge R_0$ and set
		\[
		\epsilon_0:=\alpha-1-\frac1q-\delta>0.
		\]
		Then there exist constants $C,c,\delta_*>0$ and a measurable set $\Sigma_R'\subset\Sigma_R^{\mathrm{loc}}$ such that
		\begin{equation}\label{eq:SigmaRprime-measure}
			\mu_\alpha\big((\Sigma_R')^c\big)\le Ce^{-cR^{\delta_*}},
		\end{equation}
		and, for every $\phi\in\Sigma_R'$ and every dyadic $M$, the colored terms of the corresponding solution $u(t)$ satisfy
		\begin{equation}\label{eq:psi-series-summable}
			\sum_{N>M}\|\psi_N\|_{L_{t,x}^\infty([-T_R,T_R]\times\S^2)}\lesssim_R M^{-\epsilon_0}.
		\end{equation}
	\end{proposition}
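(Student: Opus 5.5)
The plan is to bound each $\|\psi_N\|_{L^\infty_{t,x}}$ with high probability by $N^{-\epsilon_0}$ (up to $R$- and $\log N$-factors), and then sum dyadically and use Borel--Cantelli-type union bounds. On $[-T_R,T_R]$ we have $\psi_N=\psi_N^\dagger$ by \eqref{eq:dagger-equals-original}, and $\pi_n\psi_N^\dagger(t)=\chi_T(t)\lambda_n^{-(\alpha-\frac12)}\mathcal{H}^{N,\dagger}_n(t)(e_n^\omega)$ for $N/2<n\le N$. It therefore suffices to control
\[
\sup_{|t|\le T_R}\sup_{x\in\S^2}\Big|\sum_{N/2<n\le N}\pi_n\psi_N^\dagger(t,x)\Big|.
\]

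\textbf{Step 1 (fixed $(t,x)$).} Fix $t$ with $|t|\le T$ and $x\in\S^2$. Apply Lemma \ref{lem-invariance} with $F$ the (truncated) functional $|\sum_n\lambda_n^{-(\alpha-\frac12)}f_n(x)|^p$ on $E_{n_1}\times\cdots$. The joint law of $(\mathcal{H}^{N,\dagger}_n(t)e_n^\omega)_n$ conditioned on $\mathcal{B}_{\le N/2}$ equals that of $(e_n^\omega)_n$. Hence the $L^p_\omega$ norm of $\pi$-sums of $\psi_N^\dagger(t,x)$ equals that of $P_N\phi_\alpha^\omega(x)$ (up to $\chi_T\le 1$). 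By Proposition \ref{prop:large deviation estimate} this is $\lesssim\sqrt p\,N^{-\alpha+1}$, uniformly in $(t,x)$. This gives a Gaussian tail for each fixed point.

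\textbf{Step 2 (discretization).} Cover $[-T,T]\times\S^2$ by a grid of $N^{A}$ points: spatial spacing $N^{-10}$ and time spacing $N^{-K}$, with $K$ large depending on $\sigma$. A union bound with threshold $\lambda=N^{-\alpha+1}\log N\cdot R$ costs at most $N^{A}e^{-cR^2\log^2N}$, which is summable in $N$ and bounded by $e^{-cR^{2}}$. Off the grid, the spatial increments are handled by Bernstein's inequality, since the frequency is $\le N$ and there is a polynomial bound on $\|\psi_N^\dagger\|_{L^\infty_tL^2_x}$. The time increments are the delicate part. By \eqref{eq:psin-X0gamma} and Lemma \ref{lem-Pre-embed} (with $\beta=\gamma-\frac1{q'}>0$), applied to $e^{it\lambda_n^2}\chi_T\pi_n\psi_N^\dagger$ with values in $L^\infty(E_n)$, on $\Sigma_R^{\mathrm{loc}}$ the function $t\mapsto\pi_n\psi_N^\dagger$ is $\beta$-Hölder in $L^\infty_x$. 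The bound is
\[
\lesssim R T^{-\gamma+\frac1{q'}} N^{2}\,N^{-(\alpha-\frac12)+\frac2q+\delta},
\]
where the factor $N^2$ comes from the phase $e^{-it\lambda_n^2}$. Choosing the time mesh $N^{-K}$ with $K\beta\gg 1$ makes the increment negligible. Summing over the $\sim N$ shells $n$ then gives the off-grid error $\ll N^{-\epsilon_0}$.

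The main obstacle is this step: the Hölder exponent $\beta$ is tiny, so $K\sim\beta^{-1}$ is huge, but since only polynomial losses in $N$ enter the union bound, this still works. The exponent $\epsilon_0=\alpha-1-\frac1q-\delta$ presumably arises precisely when one uses \eqref{eq:psin-X0gamma} directly. Using it with an $L^\infty_x$ bound over the shell sum, a cruder alternative is to sum $\|\pi_n\psi_N^\dagger\|_{L^\infty}$ over $n$ via Lemma \ref{lem-Pre-embed}. That loses $N^{1/2}$ relative to the probabilistic route, so I would keep the probabilistic route and absorb the $N^{1/q+\delta}$ losses into $\epsilon_0$.

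\textbf{Step 3.} Define $\Sigma_R'$ as $\Sigma_R^{\mathrm{loc}}$ intersected with the grid events over all dyadic $N$, with thresholds $R\,N^{-\epsilon_0}$. Its complement has measure $\le Ce^{-cR^{\delta_*}}$. On $\Sigma_R'$ we get $\|\psi_N\|_{L^\infty_{t,x}}\lesssim_R N^{-\epsilon_0}$, and summing the geometric series over $N>M$ gives \eqref{eq:psi-series-summable}.
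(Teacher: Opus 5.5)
Your proof is correct. For the time variable it uses the same device as the paper: H\"older continuity of $t\mapsto\pi_n\psi_N^\dagger(t)$ in $L^\infty_x$ from \eqref{eq:psin-X0gamma} and Lemma \ref{lem-Pre-embed}, where the phase $e^{-it\lambda_n^2}$ costs only $N^{2\beta}$ (your $N^2$ is a harmless overestimate), together with a polynomially fine time mesh (the paper takes $\delta_N=N^{-(3+\epsilon_0)/\beta}$).

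You differ from the paper in how the spatial supremum and the probabilistic input at the nodes are handled.

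\textbf{The paper's route.} It uses no spatial net. At each time node it applies Bernstein on the shell, $\|\psi_N^\dagger\|_{L^\infty_x}\lesssim N^{2/p_0}\|\psi_N^\dagger\|_{L^{p_0}_x}$. It then uses Lemma \ref{lem-invariance} to replace $\psi_N^\dagger(t_{j,N})$ in law by $P_N\phi_\alpha^\omega$, and Corollary \ref{coro:probability of linear solution} for the $L^{p_0}_x$ tail. The loss $\frac1q+\delta=\frac2{p_0}+\eta_0$ built into $\epsilon_0$ is exactly room for two things: the Bernstein factor, and the extra $N^{\eta_0}$ that lets the tails $e^{-c_0A_RN^{\eta_0}}$ beat the number of nodes. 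So your guess that $\epsilon_0$ comes from \eqref{eq:psin-X0gamma} is off, though nothing in your argument depends on it.

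\textbf{Your route.} You use the exact conditional Gaussianity of $\psi_N^\dagger(t,x)$ at each space-time node and a union bound over an $N^{O(1)}$ net. The $\log N$ in the threshold absorbs the cardinality of the net. This buys a sharper, lossless shell bound, $\|\psi_N\|_{L^\infty_{t,x}}\lesssim_R N^{-(\alpha-1)}\log N$. The price is an extra deterministic step: spatial Lipschitz control, via Bernstein for the gradient plus a polynomial bound on $\psi_N^\dagger$ that is uniform on the good set. You should say where that polynomial bound comes from. Either of these works:
\begin{itemize}
\item the same Fourier--Lebesgue estimate, which gives $\|\pi_n\psi_N^\dagger\|_{L^\infty_{t,x}([-T_R,T_R]\times\S^2)}\lesssim A_RN^{-(\alpha-\frac12)+\frac2q+\delta}$ on $\Sigma_R^{\mathrm{loc}}$, with $A_R=RT^{-\beta}$;
\item unitarity of $\mathcal{H}^{N,\dagger}_n(t)$, which gives $\|\pi_n\psi_N^\dagger(t)\|_{L^2}\lesssim\|\pi_n\phi\|_{L^2}$, combined with a large-deviation cut on $\|\phi\|_{L^2}$.
\end{itemize}

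\textbf{Cosmetic points.} Restrict the net to $[-T_R,T_R]$, where $\chi_T\equiv1$, since the H\"older bound concerns $\chi_T\pi_n\psi_N^\dagger$. Use $\log(2N)$ so the threshold does not vanish at $N=1$.
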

	
	\begin{proof}
		Set
		\[
		\beta:=\gamma-\frac1{q'}>0.
		\]
		Choose $p_0>2$ so large that
		\[
		\eta_0:=\frac1q+\delta-\frac2{p_0}>0.
		\]
		Let
		\[ \delta_N:=N^{-\frac{3+\epsilon_0}{\beta}},\qquad K_N:=\Big\lceil \frac{T_R}{\delta_N}\Big\rceil,
		\qquad t_{j,N}:=j\delta_N\quad (|j|\le K_N).
		\]
		Since $T_R\le T/2$, we have $\chi_T(t_{j,N})=1$ for all $|j|\le K_N$.
		
		Let
		\[
		A_R:=R T^{-\beta}=R^{1+10\beta/\theta}.
		\]
		For each dyadic $N$, define
		\[
		\Sigma_{R,N}'
		:=
		\Big\{\phi\in \Sigma_R^{\mathrm{loc}}:
		\|\psi^{\dagger}_N(t_{j,N})\|_{L_x^{\infty}}
		\le A_RN^{-\epsilon_0}\ \text{for all }|j|\le K_N\Big\},
		\]
		and set
		\[
		\Sigma_R':=\bigcap_{N\in 2^{\mathbb N}}\Sigma_{R,N}'.
		\]
		We prove first that, for every $\phi\in\Sigma_R'$, one has
		\begin{equation}\label{eq:psiN-shell-bound}
			\|\psi_N\|_{L^\infty_{t,x}([-T_R,T_R]\times\S^2)}\lesssim_R N^{-\epsilon_0}.
		\end{equation}
		Since $\psi_N^\dagger=\psi_N$ on $[-T_R,T_R]$, it is enough to estimate $\psi_N^\dagger$.
		
		For $N/2<n\le N$, define
		\[
		F_{N,n}(t):=e^{it\lambda_n^2}\chi_T(t)\pi_n\psi_N^\dagger(t).
		\]
		By \eqref{eq:psin-X0gamma} and Lemma \ref{lem-Pre-embed},
		\begin{equation}\label{eq:FNn-holder}
			\|F_{N,n}\|_{C_t^{0,\beta}(\mathbb R;L_x^\infty)}
			\lesssim A_R N^{-(\alpha-\frac12)+\frac2q+\delta}.
		\end{equation}
		Now fix $s,t\in[-T_R,T_R]$. Since $\chi_T\equiv 1$ on $[-T_R,T_R]$, we may write
		\[
		\pi_n\psi_N^\dagger(t)=e^{-it\lambda_n^2}F_{N,n}(t),\qquad
		\pi_n\psi_N^\dagger(s)=e^{-is\lambda_n^2}F_{N,n}(s).
		\]
		Using $|e^{-i\theta}-1|\lesssim |\theta|^{\beta}$ for $0<\beta<1$, we obtain
		\begin{align*}
			\|\pi_n\psi_N^\dagger(t)-\pi_n\psi_N^\dagger(s)\|_{L_x^\infty}
			&\le \|F_{N,n}(t)-F_{N,n}(s)\|_{L_x^\infty}
			+|e^{-i(t-s)\lambda_n^2}-1|\,\|F_{N,n}(s)\|_{L_x^\infty}\\
			&\lesssim |t-s|^{\beta}(1+\lambda_n^{2\beta})\|F_{N,n}\|_{C_t^{0,\beta}(\mathbb R;L_x^\infty)}.
		\end{align*}
		Since $\lambda_n\sim N$ on the shell $N/2<n\le N$, \eqref{eq:FNn-holder} gives
		\[
		\|\pi_n\psi_N^\dagger(t)-\pi_n\psi_N^\dagger(s)\|_{L_x^\infty}
		\lesssim A_R N^{-(\alpha-\frac12)+\frac2q+\delta+2\beta}|t-s|^{\beta}.
		\]
		Summing over $n\in(N/2,N]$, we deduce
		\begin{equation}\label{eq:psiN-holder-full}
			\|\psi_N^\dagger(t)-\psi_N^\dagger(s)\|_{L_x^\infty}
			\lesssim A_R N^{\frac32-\alpha+\frac2q+\delta+2\beta}|t-s|^{\beta}.
		\end{equation}
		For any $t\in[-T_R,T_R]$, choose $j$ with $|j|\le K_N$ and $|t-t_{j,N}|\le \delta_N$. Since
		\[
		\Big(\frac32-\alpha+\frac2q+\delta+2\beta\Big)+\epsilon_0
		=\frac12+\frac1q+2\beta<3+\epsilon_0,
		\]
		we infer from \eqref{eq:psiN-holder-full} that
		\[
		\|\psi_N^\dagger(t)-\psi_N^\dagger(t_{j,N})\|_{L_x^\infty}
		\lesssim A_R N^{-\epsilon_0}.
		\]
		If $\phi\in\Sigma_R'$, then by definition of $\Sigma_{R,N}'$,
		\[
		\|\psi_N^\dagger(t_{j,N})\|_{L_x^\infty}\le A_RN^{-\epsilon_0}.
		\]
		Therefore,
		\[
		\|\psi_N(t)\|_{L_x^\infty}
		=\|\psi_N^\dagger(t)\|_{L_x^\infty}
		\lesssim A_RN^{-\epsilon_0},
		\qquad |t|\le T_R,
		\]
		which proves \eqref{eq:psiN-shell-bound}. Summing over dyadic $N>M$, we obtain
		\[
		\sum_{N>M}\|\psi_N\|_{L^\infty_{t,x}([-T_R,T_R]\times\S^2)}
		\lesssim_R \sum_{N>M}N^{-\epsilon_0}
		\lesssim_R M^{-\epsilon_0},
		\]
		that is, \eqref{eq:psi-series-summable}.
		
		It remains to prove \eqref{eq:SigmaRprime-measure}. Since $\mu_\alpha$ is the law of $\phi_\alpha^\omega$, it suffices to estimate the pullback of $\Sigma_R'$ to the Gaussian probability space. For fixed $N$ and $j$, consider the event
		\[
		E_{N,j}:=\Big\{\omega:\ \|\psi_N^\dagger(t_{j,N})\|_{L_x^\infty}>A_RN^{-\epsilon_0}\Big\}.
		\]
		By Bernstein's inequality on the frequency shell $(N/2,N]$,
		\[
		\|\psi_N^\dagger(t_{j,N})\|_{L_x^\infty}
		\lesssim N^{2/p_0}\|\psi_N^\dagger(t_{j,N})\|_{L_x^{p_0}}.
		\]
		Hence
		\begin{equation}\label{eq:ENj-reduction}
			E_{N,j}
			\subset
			\Big\{\omega:\ \|\psi_N^\dagger(t_{j,N})\|_{L_x^{p_0}}
			>cA_RN^{-\alpha+1+\eta_0}\Big\}
		\end{equation}
		for some absolute constant $c>0$. Since $|t_{j,N}|\le T_R\le T/2$, we have $\chi_T(t_{j,N})=1$; moreover, by Lemma \ref{lem-invariance}, the conditional law of $\psi_N^\dagger(t_{j,N})$ given $\mathcal B_{\le N/2}$ coincides with the law of $P_N\phi_\alpha^\omega$. Therefore,
		\[
		\mathbb P(E_{N,j})
		\le
		\mathbb P\Big(\|P_N\phi_\alpha^\omega\|_{L_x^{p_0}}>cA_RN^{-\alpha+1+\eta_0}\Big).
		\]
		Applying Corollary \ref{coro:probability of linear solution} with $p=p_0$ and
		\[
		\varepsilon_{N,R}:=\exp(-c_0A_RN^{\eta_0})
		\]
		for a sufficiently small absolute constant $c_0>0$, and enlarging the implicit constant in the definition of $A_R$ if necessary, we obtain
		\[
		\mathbb P(E_{N,j})\le \varepsilon_{N,R}=e^{-c_0A_RN^{\eta_0}}.
		\]
		Since $K_N\leq T_RN^{\frac{3+\epsilon_0}{\beta}}$, we have
		\[
		\sum_{|j|\le K_N}\mathbb P(E_{N,j})\leq T_RN^{\frac{3+\epsilon_0}{\beta}}e^{-c_0A_RN^{\eta_0}}.
		\]
		Moreover,
		\[
		(\Sigma_R')^c\subset (\Sigma_R^{\mathrm{loc}})^c\cup \bigcup_{N\in 2^{\mathbb N}}\bigcup_{|j|\le K_N}E_{N,j}.
		\]
		Therefore,
		\[
		\mu_\alpha\big((\Sigma_R')^c\big)
		\le \mu_\alpha\big((\Sigma_R^{\mathrm{loc}})^c\big)
		+\sum_{N\in 2^{\mathbb N}}\sum_{|j|\le K_N}\mathbb P(E_{N,j})
		\lesssim e^{-c_2R^{\delta_1}}+\sum_{N\in 2^{\mathbb N}}T_RN^{\frac{3+\epsilon_0}{\beta}}e^{-c_0A_RN^{\eta_0}}.
		\]
		Since $A_R=R^{1+10\beta/\theta}$ is a positive power of $R$, the dyadic series on the right-hand side is bounded by $Ce^{-cA_R}$, and hence
		\[
		\mu_\alpha\big((\Sigma_R')^c\big)
		\le C e^{-cR^{\delta_*}}
		\]
		for some constants $C,c,\delta_*>0$. This completes the proof of Proposition \ref{prop:psi-summable}.
	\end{proof}
	
	\begin{lemma}\label{lem:w-summable}
		Fix $R\ge R_0$ and choose $s_0\in (\frac12,s)$. Then, for every initial data $\phi\in\Sigma_R^{\mathrm{loc}}$, the nonlinear remainders $w_N$ for the solution $u(t)$ with initial data $\phi$ in Theorem \ref{thm:BCST} satisfy the tail estimate
		\begin{equation}\label{eq:w-series-summable}
			\sum_{N>M}\|w_N\|_{L_x^2L_t^\infty([-T_R,T_R]\times\S^2)}\lesssim_R M^{-(s-s_0)}.
		\end{equation}
	\end{lemma}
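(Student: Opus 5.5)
The plan is to apply Lemma \ref{maximal-easy} to each time-localized remainder $w_N^\dagger$, using the $X^{0,b}$ bounds \eqref{eq:wn-X0b}--\eqref{eq:wn-tail-X0b} from Proposition \ref{prop:loc-bounds}, and then sum over dyadic $N$. First, since $T_R\le T/2$, the identity \eqref{eq:dagger-equals-original} gives $w_N=w_N^\dagger$ on $[-T_R,T_R]$. Fix $\chi\in C_c^\infty$ equal to $1$ on $[-1,1]$. Then
\[
\|w_N\|_{L^2_xL^\infty_t([-T_R,T_R]\times\S^2)}\le \|\chi(t)w_N^\dagger\|_{L^2_xL^\infty_t}\le C\|w_N^\dagger\|_{X^{s_0,b}},
\]
by Lemma \ref{maximal-easy} with $s_0>\frac12$ and $b>\frac12$. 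The task is therefore reduced to a bound on $\|w_N^\dagger\|_{X^{s_0,b}}$.

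The key issue is that $w_N^\dagger$ is not frequency localized to the shell $(N/2,N]$, so \eqref{eq:wn-X0b} alone cannot convert $X^{0,b}$ into $X^{s_0,b}$. To handle this, I will decompose $w_N^\dagger=P_{\le 2N}w_N^\dagger+\sum_{L\ge 2N}(P_{\le 2L}-P_{\le L})w_N^\dagger$ dyadically, and estimate each piece as follows.

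\begin{itemize}
\item The low piece satisfies $\|P_{\le 2N}w_N^\dagger\|_{X^{s_0,b}}\lesssim N^{s_0}\|w_N^\dagger\|_{X^{0,b}}\le R^{-1}N^{s_0-s}$ by \eqref{eq:wn-X0b}.
\item Each annular piece satisfies $\|(P_{\le 2L}-P_{\le L})w_N^\dagger\|_{X^{s_0,b}}\lesssim L^{s_0}\|(\mathrm{Id}-P_{\le L})w_N^\dagger\|_{X^{0,b}}\le L^{s_0}(N/L)^{10}R^{-1}N^{-s}$ by \eqref{eq:wn-tail-X0b}.
\end{itemize}

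Since $s_0<10$, summing the second bound over dyadic $L\ge 2N$ gives $\lesssim R^{-1}N^{s_0-s}$. Hence $\|w_N^\dagger\|_{X^{s_0,b}}\lesssim R^{-1}N^{-(s-s_0)}$.

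Finally, because $s_0<s$, the dyadic sum converges: $\sum_{N>M}N^{-(s-s_0)}\lesssim M^{-(s-s_0)}$. This yields \eqref{eq:w-series-summable}, with an implicit constant that is in fact uniform in $R$.

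The step I expect to be the main obstacle is the frequency tail. Without \eqref{eq:wn-tail-X0b}, the high-frequency part of $w_N^\dagger$ would cost an unbounded factor $L^{s_0}$. The rapid decay $(N/L)^{10}$ is exactly what absorbs that factor, and it is needed because the maximal estimate of Lemma \ref{maximal-easy} demands more than $\frac12$ derivative. A lesser technical point is the time cutoff in Lemma \ref{maximal-easy}. Since $w_N^\dagger$ is already compactly supported in $(-T,T)$ and $\chi$ equals $1$ there, $\chi w_N^\dagger=w_N^\dagger$, so no loss arises from the cutoff.
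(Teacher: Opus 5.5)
Your proposal is correct and follows essentially the same route as the paper. You use $w_N=w_N^\dagger$ on $[-T_R,T_R]$ and Lemma \ref{maximal-easy} to reduce to a bound on $\|w_N^\dagger\|_{X^{s_0,b}}$, control $P_{\le 2N}w_N^\dagger$ by \eqref{eq:wn-X0b}, control the dyadic annuli by \eqref{eq:wn-tail-X0b} (with $s_0<10$ making the sum converge), and then sum $N^{s_0-s}$ over $N>M$. The differences are only cosmetic: your annuli $P_{\le 2L}-P_{\le L}$ with $L\ge 2N$ are the paper's $P_L$ with $L\ge 4N$ reindexed, your fixed cutoff equal to $1$ on $[-1,1]$ replaces the paper's $\widetilde\chi$, and your remark that the constant is uniform in $R$ is correct because $R^{-1}\le 1$.
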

	
	\begin{proof}
		Let $\widetilde\chi\in C_c^\infty(\mathbb{R})$ be such that $\widetilde\chi\equiv 1$ on $[-T,T]$. Since $w_N^\dagger$ is compactly supported in $[-T,T]$, Lemma \ref{maximal-easy} gives
		\[
		\|w_N\|_{L_x^2L_t^\infty([-T_R,T_R]\times\S^2)}
		\le
		\|w_N^\dagger\|_{L_x^2L_t^\infty(\mathbb{R}\times\S^2)}
		=\|\widetilde\chi w_N^\dagger\|_{L_x^2L_t^\infty}
		\lesssim \|w_N^\dagger\|_{X^{s_0,b}}.
		\]
		We now estimate the right-hand side by using \eqref{eq:wn-X0b} and \eqref{eq:wn-tail-X0b}. Decompose dyadically as
		\[
		\|w_N^\dagger\|_{X^{s_0,b}}
		\lesssim
		\|P_{\le 2N}w_N^\dagger\|_{X^{s_0,b}}+
		\sum_{L\in 2^{\mathbb{N}},\,L\ge 4N}\|P_Lw_N^\dagger\|_{X^{s_0,b}}.
		\]
		For the low-frequency part, \eqref{eq:wn-X0b} yields
		\[
		\|P_{\le 2N}w_N^\dagger\|_{X^{s_0,b}}
		\lesssim N^{s_0}\|w_N^\dagger\|_{X^{0,b}}
		\lesssim_R N^{s_0-s}.
		\]
		For $L\ge 4N$, we have $P_Lw_N^\dagger=P_L(\mathrm{Id}-P_{\le L/2})w_N^\dagger$, and hence
		\begin{align*}
			\|P_Lw_N^\dagger\|_{X^{s_0,b}}
			&\lesssim L^{s_0}\|P_Lw_N^\dagger\|_{X^{0,b}}\\
			&\le L^{s_0}\|(\mathrm{Id}-P_{\le L/2})w_N^\dagger\|_{X^{0,b}}\\
			&\lesssim_R L^{s_0}\left(\frac{N}{L/2}\right)^{10}N^{-s}
			\lesssim_R N^{10-s}L^{s_0-10}.
		\end{align*}
		Since $s_0<10$, the dyadic sum over $L\ge 4N$ is bounded by a constant multiple of $N^{s_0-s}$. Therefore,
		\[
		\|w_N^\dagger\|_{X^{s_0,b}}\lesssim_R N^{s_0-s}.
		\]
		Because $s_0-s<0$, the dyadic series $\sum_N N^{s_0-s}$ converges, and \eqref{eq:w-series-summable} follows.
	\end{proof}
	
	\begin{proposition}\label{prop:nonlinear-tail}
		For every $R\ge R_0$, let
		\[
		\Sigma_R^{''}:=\Sigma_R\cap\Sigma_R'.
		\]
		Then
		\begin{equation}\label{eq:OmegaR-estimate}
			\mu_{\alpha}((\Sigma_R^{''})^c)\le Ce^{-cR^{\delta_*}}
		\end{equation}
		for some constants $C,c,\delta_*>0$, and for every $\phi\in\Sigma_R^{''}$, the solutions $u_M(t)$ of \eqref{fml-WickOrd-NLS} with initial data $P_{\le M}\phi$ and their limit $u(t)$ satisfy
		\begin{equation}\label{eq:nonlinear-tail-convergence}
			\lim_{M\to\infty}\left\|\sup_{|t|\le T_R}|u(t)-u_M(t)|\right\|_{L_x^2(\S^2)}=0.
		\end{equation}
	\end{proposition}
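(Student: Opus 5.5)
The measure bound \eqref{eq:OmegaR-estimate} is a union bound. Since $(\Sigma_R'')^c=\Sigma_R^c\cup(\Sigma_R')^c$, Theorem \ref{thm:BCST} and \eqref{eq:SigmaRprime-measure} give
\[
\mu_\alpha((\Sigma_R'')^c)\le C_1e^{-c_1R^{\delta_0}}+Ce^{-cR^{\delta_*}}.
\]
Renaming $\delta_*$ as $\min\{\delta_0,\delta_*\}$ and adjusting the constants yields \eqref{eq:OmegaR-estimate}.

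For \eqref{eq:nonlinear-tail-convergence}, fix $\phi\in\Sigma_R''\subset\Sigma_R\cap\Sigma_R^{\mathrm{loc}}$. Work on $[-T_R,T_R]$, where $T_R\le\tau_R$ and \eqref{eq:dagger-equals-original} holds. By the dyadic decomposition, for dyadic $M$ and any dyadic $L>M$,
\[
u_L-u_M=\sum_{M<N\le L}(\psi_N+w_N).
\]
By Theorem \ref{thm:BCST}, $u_L(t)\to u(t)$ in $C([-T_R,T_R];H^{(\alpha-1)-})$, hence in $C_tL^2_x$. So $u-u_M$ is the limit of these partial sums.

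We then estimate the two series separately, using that $L^2(\S^2)$ norms are dominated by $L^\infty$ norms (finite area).
\begin{itemize}
\item[(i)] For the colored part, Proposition \ref{prop:psi-summable} gives
\[
\Big\|\sup_{|t|\le T_R}\sum_{N>M}|\psi_N|\Big\|_{L^2_x}\lesssim\sum_{N>M}\|\psi_N\|_{L^\infty_{t,x}}\lesssim_R M^{-\epsilon_0}.
\]
\item[(ii)] For the remainder, Lemma \ref{lem:w-summable} (valid since $\Sigma_R''\subset\Sigma_R^{\mathrm{loc}}$) and Minkowski give
\[
\Big\|\sup_{|t|\le T_R}\sum_{N>M}|w_N|\Big\|_{L^2_x}\lesssim_R M^{-(s-s_0)}.
\]
\end{itemize}
It follows that $\sum_{N>M}(\psi_N+w_N)$ converges absolutely, in the mixed norm $L^2_xL^\infty_t([-T_R,T_R])$, to some $G_M$ with $\|G_M\|_{L^2_xL^\infty_t}\lesssim_R M^{-\epsilon_0}+M^{-(s-s_0)}$.

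The main obstacle is identifying $G_M$ with $u-u_M$ pointwise in $(t,x)$, since Theorem \ref{thm:BCST} only provides convergence in $C_tH^{(\alpha-1)-}$. The plan is to use two limits of the same partial sums $S_L:=\sum_{M<N\le L}(\psi_N+w_N)=u_L-u_M$:
\begin{itemize}
\item[(a)] In $L^2_xL^\infty_t$, $S_L\to G_M$. Passing to a subsequence, $S_L(t,x)\to G_M(t,x)$ for a.e.\ $x$, uniformly in $t$.
\item[(b)] For each fixed $t$, $S_L(t)\to u(t)-u_M(t)$ in $L^2_x$.
\end{itemize}
Hence $G_M(t,\cdot)=u(t)-u_M(t)$ a.e.\ for each $t$. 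Taking $t$ in a countable dense subset of $[-T_R,T_R]$, the identity holds for all such $t$ on a common full-measure set of $x$.

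To pass to the supremum over all $|t|\le T_R$, we need $t\mapsto u(t,x)-u_M(t,x)$ to be continuous for a.e.\ $x$. We obtain this from the structure of the two series. Each $\psi_N$ is continuous into $L^\infty_x$ by the Hölder bound \eqref{eq:psiN-holder-full}, and the series converges uniformly, so $\sum_{N>M}\psi_N$ is jointly continuous in $t$ uniformly in $x$. Each $w_N$ is smooth (a difference of smooth truncated solutions minus $\psi_N$), and its series converges in $L^2_xL^\infty_t$, so its sum is continuous in $t$ for a.e.\ $x$ along the subsequence in (a). Continuity then lets us replace the sup over the dense set by the sup over $[-T_R,T_R]$, giving
\[
\Big\|\sup_{|t|\le T_R}|u(t)-u_M(t)|\Big\|_{L^2_x}=\|G_M\|_{L^2_xL^\infty_t}\lesssim_R M^{-\epsilon_0}+M^{-(s-s_0)}\to0.
\]
Finally, for non-dyadic $M$ we interpolate between consecutive dyadic scales, noting that $u_M$ is defined for general $M$ via $P_{\le M}$; if only dyadic $M$ is intended, this step is unnecessary.
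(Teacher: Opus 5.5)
Your proof is correct and follows the paper's route: a union bound for the measure estimate, then the dyadic identity $u-u_M=\sum_{N>M}(\psi_N+w_N)$, with Proposition \ref{prop:psi-summable} controlling the colored terms in $L^\infty_{t,x}$ and Lemma \ref{lem:w-summable} controlling the remainders in $L^2_xL^\infty_t$. Your extra step, which identifies the continuous-in-$t$ representative of $u-u_M$ with the absolutely convergent series, justifies a point the paper takes for granted; the closing remark about non-dyadic $M$ is unnecessary, since \eqref{fml-WickOrd-NLS} is only posed for dyadic $M$, and the proposed interpolation is not supported by the available dyadic bounds.
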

	
	\begin{proof}
		The measure estimate \eqref{eq:OmegaR-estimate} follows immediately from Theorem \ref{thm:BCST} and Proposition \ref{prop:psi-summable}. Now fix $\phi\in\Sigma_R^{''}$. On $[-T_R,T_R]$, the limit solution $u$ from Theorem \ref{thm:BCST} satisfies
		\[
		u(t)-u_M(t)=\sum_{N>M}(\psi_N(t)+w_N(t)).
		\]
		Therefore,
		\begin{align*}
			\left\|\sup_{|t|\le T_R}|u(t)-u_M(t)|\right\|_{L_x^2}
			\le& \left\|\sup_{|t|\le T_R}\left|\sum_{N>M}\psi_N(t)\right|\right\|_{L_x^2}
			+\left\|\sup_{|t|\le T_R}\left|\sum_{N>M}w_N(t)\right|\right\|_{L_x^2}\\
			\lesssim& \sum_{N>M}\|\psi_N\|_{L^\infty([-T_R,T_R]\times\S^2)}\\
			&+\sum_{N>M}\|w_N\|_{L_x^2L_t^\infty([-T_R,T_R]\times\S^2)}.
		\end{align*}
		The first term tends to $0$ by Proposition \ref{prop:psi-summable}. The second one tends to $0$ by Lemma \ref{lem:w-summable}. Hence \eqref{eq:nonlinear-tail-convergence} follows.
	\end{proof}
	
	\begin{proof}[Proof of Theorem \ref{thm:2}]
		Set
		\[
		\Sigma_*:=\bigcup_{m\ge R_0,\,m\in\mathbb{N}}\Sigma_m^{''}.
		\]
		By \eqref{eq:OmegaR-estimate},
		\[
		\mu_{\alpha}((\Sigma_*)^c)=\mu_{\alpha}\Big(\bigcap_{m\ge R_0}(\Sigma_m^{''})^c\Big)
		\le \inf_{m\ge R_0}\mu_{\alpha}((\Sigma_m^{''})^c)=0.
		\]
		Hence $\mu_{\alpha}(\Sigma_*)=1$.
		
		Fix $\phi\in\Sigma_*$. Then there exists $m\ge R_0$ such that $\phi\in\Sigma_m^{''}$. Let $u$ be the corresponding local solution on $[-T_m,T_m]$, where $T_m$ denotes the quantity in \eqref{eq:TR} with $R=m$. For $\varepsilon>0$, define
		\[
		E_\varepsilon:=\Big\{x\in\S^2: \limsup_{t\to 0}|u(t,x)-\phi(x)|>\varepsilon\Big\}.
		\]
		We remark that $E_{\epsilon}$ depends on $\phi$. Below we will prove that $E_{\epsilon}$ is a null Lebesgue measurable set.
		
		To see this, fix a dyadic number $M$. Since $u_M$ is a smooth solution with smooth initial data $P_{\le M}\phi$, we have
		\[
		\lim_{t\to 0}u_M(t,x)=P_{\le M}\phi(x)
		\qquad\text{for every }x\in\S^2.
		\]
		Therefore,
		\begin{align*}
			E_\varepsilon
			\subset&~
			\Big\{x\in\S^2: \sup_{|t|\le T_m}|u(t,x)-u_M(t,x)|>\frac{\varepsilon}{2}\Big\}\\
			&\cup
			\Big\{x\in\S^2: |P_{>M}\phi(x)|>\frac{\varepsilon}{2}\Big\}.
		\end{align*}
		By Chebyshev's inequality and Proposition \ref{prop:nonlinear-tail},
		\begin{align*}
			|E_{\varepsilon}|
			&\le \frac{4}{\varepsilon^2}
			\left\|\sup_{|t|\le T_m}|u(t)-u_M(t)|\right\|_{L_x^2}^2
			+\frac{4}{\varepsilon^2}\|P_{>M}\phi\|_{L_x^2}^2\\
			&\longrightarrow 0
			\qquad\text{as }M\to\infty.
		\end{align*}
		Here the first term tends to $0$ by Proposition \ref{prop:nonlinear-tail}, while the second tends to $0$ because $\phi\in\Sigma_m\subset H^{(\alpha-1)-}(\S^2)\subset L^2(\S^2)$ and $P_{>M}\phi\to 0$ in $L^2$. We conclude that $|E_\varepsilon|=0$ for every $\varepsilon>0$, that is,
		\[
		\lim_{t\to 0}u(t,x)=\phi(x)
		\qquad\text{for a.e. }x\in\S^2.
		\]
		This proves Theorem \ref{thm:2}.
	\end{proof}

\end{document}